\theoremstyle{remark}
\newtheorem{theo}{test}[section]
\newtheorem{rmq}[theo]{Remark}
\newtheorem{ex}[theo]{Example}
\theoremstyle{plain}
\newtheorem*{rappel}{Theorem}
\theoremstyle{plain}
\newtheorem{theorem}[theo]{Theorem}
\newtheorem{prop}[theo]{Proposition}
\newtheorem{lemma}[theo]{Lemma}
\newtheorem{cor}[theo]{Corollary}
\newtheorem{definition}[theo]{Definition}
\DeclareMathOperator{\Spec}{Spec}
\DeclareMathOperator{\spm}{Spm}
\DeclareMathOperator{\im}{Im}
\DeclareMathOperator{\Dom}{Dom}
\newcommand{\m}{\mathfrak{m}}
\newcommand{\p}{\mathfrak{p}}
\newcommand{\Ocal}{\mathcal{O}}
\newcommand{\Zcal}{\mathcal{Z}}
\newcommand{\Vcal}{\mathcal{V}}
\newcommand{\Dcal}{\mathcal{D}}
\newcommand{\Rad}{\mathrm{Rad}}
\newcommand{\NilRad}{\mathrm{NilRad}}
\newcommand{\inj}{\hookrightarrow}
\newcommand{\KL}{\mathcal{K}^L}
\newcommand{\KO}{\mathcal{K}^0}
\newcommand{\K}{\mathcal{K}}
\newcommand{\piC}{\pi_{_\mathbb{C}}}
\newcommand{\pik}{\pi_k}
\newcommand{\C}{\mathbb{C}}
\newcommand{\R}{\mathbb{R}}
\newcommand{\A}{\mathbb{A}}
\newcommand{\Pbb}{\mathbb{Pbb}}
\title{\Large\textbf{Relative Lipschitz saturation of complex algebraic varieties}}
\date{}
\author{François Bernard}
\begin{document}

\maketitle

\begin{abstract}
    This paper is devoted to the study of the relative Lipschitz saturation of complex algebraic varieties. More precisely, we investigate the concept of Lipschitz saturation of a variety in another, and we focus on the case where the dominant morphism between the two varieties is not necessarily finite. In particular, we answer, in the case of algebraic varieties, an open question of Pham and Teissier concerning the finiteness of the Lipschitz saturation of general algebras. Finally, we use the Lipschitz saturation to provide algebraic criteria for two algebraic varieties to be linked by an algebraic morphism, which is a locally biLipschitz homeomorphism on the closed points of the variety.
\end{abstract}

\section*{Introduction}

\let\thefootnote\relax\footnotetext{2020 \textit{mathematics subject classification.} 14M05, 14B05, 13B22}

In this paper, we use the recent work done in \cite{Bernard2021, BFMQ} about seminormalization and continuous rational functions on algebraic varieties in order to study the relative Lipschitz saturation of algebraic varieties.

The concept of Lipschitz saturation was introduced by Pham and Teissier in 1969 in \cite{PhamTeissier} in the context of complex analytic varieties. While having an algebraic definition, the Lipschitz saturation can be obtained by considering the Lipschitz meromorphic functions defined on the analytic germ we consider. As explained in \cite{FGST2020LipschitzAnAlgebraicApproch}, this object is well understood for plane curves and is used to study biLipschitz equivalence between singularities of analytic curves. More generally, the study of bilipschitz equivalence between singularities is an active domain of research, see for example \cite{Pichon2020IntroductionLipschitzGeo, NeumannPichon2013LipCurves, Fernandes2003TopEquiCurveBilip, FernanJelo2023BilipSpaceCurves, Jelonek2021AlgebraicBilipHomeo}. The Lipschitz saturation itself has been studied, some years after Pham and Teissier, by Lipman in \cite{Lipman1975} where he investigates the algebraic properties of the Lipschitz saturation for general algebras. Very recent works about Lipschitz saturation can also be found in \cite{DaSilvaRibeiro2023, GaffneyDaSilva2023, DuarteFlores2023}.
A closely related object that we will use in our study is the seminormalization. Originally introduced by Andreotti and Norguet \cite{Andre} for analytic varieties, by Traverso \cite{T} for general rings, and by Andreotti and
Bombieri \cite{AndreottiBombieri}  for schemes, the seminormalization $X^+$ of an algebraic variety $X$ is the biggest intermediate variety between $X$ and its normalization, which is bijective to $X$. Very recently, the seminormalization of complex algebraic varieties has been studied in \cite{Bernard2021} by using rational functions that extend continuously, for the Euclidean topology, to the closed points of the variety.

For a ring $R$ and an extension $A\inj B$ of $R$-algebras, the Lipschitz saturation of $A$ in $B$ is given by the ring 
$$ A^L_B := \Big\{ b\in B \mid b\otimes 1 - 1\otimes b \in \overline{I} \Big\} $$
where $\overline{I}$ is the integral closure of the ideal $I \subset B\otimes_R B$ generated by the elements of the form $a\otimes 1 - 1\otimes a$ with $a\in A$.
The Lipschitz saturation of an analytic space $(X,\Ocal_X)$ is obtained by considering, at each point $x$, the Lipschitz saturation $A^L_B$ where $A$ is the $\C$-algebra $\Ocal_{X,x}$ and $B$ is the normalization $A'$ of $A$. The use of the normalization here is natural, since locally Lipschitz functions are, in particular, locally bounded. However, one may be interested in looking at the Lipschitz saturation in something different from the normalization, and so, in the article \cite{PhamTeissier}, Pham and Teissier ask the following question:\smallskip

Let $A\inj B$ be an extension of $\C$-algebras, where $B$ is a subring of the total ring of fractions of $A$. Is the Lipschitz saturation $A^L_B$ a subring of $A'$ ?\smallskip

For varieties, it can be rephrased as: do we get the property of local boundedness directly from the algebraic definition of the Lipschitz saturation? 
 The first result of this paper is to provide a counterexample to this question in the case of algebraic varieties. Thus, if one considers a non-finite dominant morphism $Y\to X$ of algebraic varieties, for example a family of varieties, we don't know if the Lipschitz saturation $\C[X]^L_{\C[Y]}$ will be an $\C$-algebra of finite type. This leads us to introduce the notion of \textit{Lipschitz seminormalization} of $X$ in $Y$, that we denote by $X^{L,+}_Y$ and which is obtained by considering the Lipschitz saturation of the coordinate ring $\C[X]$ in the integral closure $\C[X]'_{\C[Y]}$ of $\C[X]$ in $\C[Y]$. Hence, for any dominant morphism $Y\to X$, we get 
$$ Y\to X'_Y\to X^{L,+}_Y\to X $$
where $X'_Y$ is the relative normalization of $X$ in $Y$. The morphism $X^{L,+}_Y\to X$ is finite and, although the inclusion $\C[X'_Y] \inj \C[X']$ is not true in general, it is also birational. This implies that $\C[X^{L,+}_Y] \inj \C[X']$ and so that the Lipschitz seminormalization is well suited in order to consider the Lipschitz saturation of a variety relatively to another. The second goal of this paper is therefore to conduct a study of the Lipschitz seminormalization. Along the way, we will recover the classical results of the Lipschitz saturation in the category of algebraic varieties.

Finally, the last objective of the article is to give algebraic conditions for two varieties to be linked by a locally biLipschitz algebraic homeomorphism. To do this, we apply the results of \cite{BFMQ} concerning Euclidean or Zariski homeomorphisms between algebraic varieties to the Lipschitz saturation.

The paper is organized as follows: In section \ref{SectionPreli}, because we will deal with Euclidean topology on algebraic varieties, we give some classical lemmas about locally bounded functions on a general topological set. Then we recall the notions of normalization, seminormalization, and saturation for rings and algebraic varieties. In section \ref{SectionPrincipale} we start by recalling the definition of the Lipschitz saturation, and we introduce the "Lipschitz seminormalization" of a variety in another. In Example \ref{ExempleQuestionTeissier}, we show that the Lipschitz seminormalization can be different from the Lipschitz saturation. This answers the question of Pham and Teissier presented above. We end subsection \ref{SubsecLipSatLipSemi} by giving a geometric characterization of the relative Lipschitz saturation. For a dominant morphism $\pi : Y\to X$ of complex affine varieties, the Lipschitz saturation $\C[X]^L_{\C[Y]}$ of $\C[X]$ in $\C[Y]$ is given by 
$$ \C[X]^L_{\C[Y]} = \Bigl\{ p\in \C[Y] \mid \text{locally }|p(y_1)-p(y_2)| \leqslant C\|\piC(y_1)-\piC(y_2)\| \Bigr\} $$
where the norm $\|.\|$ is the one induced by the embedding of $\C[Y]$ in some $\C^n$ and the word "locally" refers to the Euclidean topology. In subsection \ref{SubsecLocLipRational}, we look at rational functions on a variety $X$ that extends to $X(\C)$ as a locally Lipschitz function. The ring of those functions is denoted by $\KL(X(\C))$. For a finite morphism $\pi:Y\to X$, we denote by $\piC$ its restriction to $Y(\C)$ and we look at the induced morphism $f\to f\circ\piC$ between the rings of locally Lipschitz rational functions. We prove that this ring morphism is an isomorphism if and only if $\piC$ is a locally biLipschitz homeomorphism. Moreover, we prove the main theorem of this subsection, which describes the Lipschitz seminormalization of a variety in another in terms of Lipschitz rational functions.
\begin{rappel}[\ref{TheoLipSatEstLipRatio}]
    Let $\pi : Y\to X$ be a dominant morphism of complex affine varieties. Then 
    $$ \C[X^{L,+}_Y] \simeq \KL(X(\C))\times_{\KL(Y(\C))} \C[Y]$$
    where $X^{L,+}_Y$ is the Lipschitz seminormalization of $X$ in $Y$.
\end{rappel}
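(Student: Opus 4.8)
The plan is to establish the stated isomorphism as an equality of subrings of $\C[Y]$. Since $\pi$ is dominant the map $f\mapsto f\circ\piC$ is injective on rational functions, so the second projection identifies the ring $\KL(X(\C))\times_{\KL(Y(\C))}\C[Y]$ with the subring
$$ S:=\bigl\{\,p\in\C[Y]\ \bigm|\ p=f\circ\piC\ \text{for some}\ f\in\KL(X(\C))\,\bigr\}\ \subseteq\ \C[Y], $$
and it then suffices to prove $\C[X^{L,+}_Y]=S$ inside $\C[Y]$; the isomorphism of the statement is then $q\mapsto(\tilde q,q)$, where $\tilde q\in\KL(X(\C))$ is the locally Lipschitz rational function produced below with $\tilde q\circ\piC=q$. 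Write $\pi':X'_Y\to X$ for the canonical morphism; it is finite, since $\C[X'_Y]=\C[X]'_{\C[Y]}$ is a finite $\C[X]$-module (the algebraic closure of $\C(X)$ inside $\C(Y)$ being a finite field extension, one applies the Noether finiteness theorem). As $\C[X^{L,+}_Y]=\C[X]^L_{\C[X'_Y]}$, the geometric characterisation of the relative Lipschitz saturation recalled in the introduction, applied to the dominant morphism $\pi'$, gives
$$ \C[X^{L,+}_Y]=\bigl\{\,q\in\C[X'_Y]\ \bigm|\ \text{locally}\ |q(z_1)-q(z_2)|\leqslant C\|\pi'_\C(z_1)-\pi'_\C(z_2)\|\,\bigr\}. $$

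The geometric core is a descent lemma that I would prove first: if $\rho:Z\to X$ is a finite surjective morphism of complex affine varieties and $g:X(\C)\to\C$ is continuous with $|g(\rho_\C(z_1))-g(\rho_\C(z_2))|\leqslant C\|\rho_\C(z_1)-\rho_\C(z_2)\|$ locally on $Z(\C)$, then $g$ is locally Lipschitz on $X(\C)$. The proof exploits that $\rho_\C$ is proper and surjective: given $x_1\in X(\C)$, cover its finite fibre by disjoint Euclidean balls on which the estimate holds with a common constant $C$, use properness to find a neighbourhood $U$ of $x_1$ with $\rho_\C^{-1}(U)$ contained in that cover, and note that every $x_2\in U$ has a lift lying in the same ball as some point of $\rho_\C^{-1}(x_1)$, whence $|g(x_1)-g(x_2)|\leqslant C\|x_1-x_2\|$. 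For the inclusion $\C[X^{L,+}_Y]\subseteq S$, take $q\in\C[X^{L,+}_Y]$. Since $X^{L,+}_Y\to X$ is finite, birational and bijective on closed points, $\C[X^{L,+}_Y]\inj\C[X']$ (as already recorded in the introduction) and $X^{L,+}_Y(\C)\to X(\C)$ is a Euclidean homeomorphism; transporting the regular function $q$ on $X^{L,+}_Y$ along this homeomorphism shows that $q$, viewed by birationality as an element $\tilde q\in\C(X)$, extends to a continuous function $g$ on $X(\C)$, and one checks $g\circ\pi'_\C=q$ on $X'_Y(\C)$. The displayed description of $\C[X^{L,+}_Y]$ is exactly the hypothesis of the descent lemma for $g$ and $\rho=\pi'$, so $g$ is locally Lipschitz, i.e. $\tilde q\in\KL(X(\C))$; since $\piC$ factors through $\pi'_\C$ one gets $\tilde q\circ\piC=q$, hence $q\in S$.

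For the reverse inclusion $S\subseteq\C[X^{L,+}_Y]$, let $p=f\circ\piC$ with $f\in\KL(X(\C))$. Being a locally bounded rational function on $X$, $f$ is integral over $\C[X]$ (see section~\ref{SectionPreli}); pulling a monic relation $f^n+c_{n-1}f^{n-1}+\dots+c_0=0$ with $c_i\in\C[X]$ back along $\piC$ and using $f\circ\piC=p$ yields $p^n+(c_{n-1}\circ\piC)p^{n-1}+\dots+(c_0\circ\piC)=0$, an identity all of whose terms are polynomials, hence valid in $\C[Y]$; therefore $p\in\C[X]'_{\C[Y]}=\C[X'_Y]$. Writing $\bar p$ for $p$ regarded in $\C[X'_Y]$ and $\lambda:Y(\C)\to X'_Y(\C)$ for the map induced by $Y\to X'_Y$, whose image is Euclidean-dense (it contains a dense Zariski-open), the equality $\bar p\circ\lambda=(f\circ\pi'_\C)\circ\lambda$ together with continuity forces $\bar p=f\circ\pi'_\C$ on all of $X'_Y(\C)$; then local Lipschitzness of $f$ on $X(\C)$ and continuity of $\pi'_\C$ give $|\bar p(z_1)-\bar p(z_2)|\leqslant C\|\pi'_\C(z_1)-\pi'_\C(z_2)\|$ locally on $X'_Y(\C)$, so $p\in\C[X^{L,+}_Y]$ by the characterisation above. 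With both inclusions established, $\C[X^{L,+}_Y]=S\simeq\KL(X(\C))\times_{\KL(Y(\C))}\C[Y]$.

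I expect the main obstacle to be the descent lemma and its use in the inclusion $\C[X^{L,+}_Y]\subseteq S$: moving a Lipschitz estimate from $X'_Y(\C)$ down to $X(\C)$ genuinely needs the properness of the finite morphism $\pi'_\C$ in order to lift points near $x_1$ to points near the fibre over $x_1$, and it also relies on the algebraic-variety analogues of the classical Pham--Teissier facts, established earlier in the paper, that $X^{L,+}_Y\to X$ is finite, birational and bijective. A secondary, more routine point is that $Y\to X'_Y$ is only dominant rather than surjective, so in the converse inclusion one has to argue on a dense open set before appealing to continuity.
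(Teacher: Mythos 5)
Your overall architecture is workable and close in spirit to the paper's proof: both directions rest on the geometric characterisation of $\C[X]^{L,+}_{\C[Y]}$ (Proposition \ref{PropLipSatCritereGeo} and its corollary) together with a descent of the Lipschitz estimate along finite surjective morphisms; the paper organises this descent through $X^+_Y$ and Theorem \ref{TheoIsoSchemaSN}, whereas you go directly through the homeomorphism $X^{L,+}_Y(\C)\to X(\C)$ and a hand-made descent lemma. Your converse inclusion ($S\subseteq \C[X^{L,+}_Y]$, via integrality of locally bounded rational functions and density plus continuity) is essentially correct, and the finiteness of $X'_Y\to X$ that you justify somewhat loosely is anyway available in the paper.

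The genuine gap is in the proof you sketch for your descent lemma, which you yourself identify as the geometric core. Covering the fibre $\rho_\C^{-1}(x_1)$ by disjoint balls on which the estimate holds and lifting each $x_2\in U$ into one of these balls only yields $|g(x_1)-g(x_2)|\leqslant C\|x_1-x_2\|$, i.e. a Lipschitz estimate anchored at the single point $x_1$, with constant and neighbourhood depending on $x_1$. Membership in $\KL(X(\C))$ requires the two-variable estimate $|g(x)-g(x')|\leqslant C\|x-x'\|$ for all pairs $x,x'$ in a fixed neighbourhood, and pointwise Lipschitzness at every point with non-uniform constants does not imply this (already on $\R$). Concretely, your argument never treats pairs $x,x'$ near $x_1$ whose lifts lie near two \emph{different} points of the fibre: for a node, points on the two branches lift near distinct fibre points $z_1\neq z_2$, the same-ball argument says nothing about such a pair, and the triangle inequality through $x_1$ fails as soon as the branches are tangent. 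What is needed is precisely the off-diagonal part of the hypothesis, namely the estimate on neighbourhoods of the pairs $(z_i,z_j)$ in $Z(\C)\times Z(\C)$, and the clean way to exploit it is the paper's: the quotient $(x,x')\mapsto |g(x)-g(x')|/\|x-x'\|$ pulled back by $\rho_\C\times\rho_\C$ is locally bounded by hypothesis, and $\rho_\C\times\rho_\C$ is proper and surjective, so Lemma \ref{LemLocBoundIfCircPIisLocBound} (with Lemma \ref{LemLocBoundCircPIisLocBound} for the easy direction) gives local boundedness downstairs, i.e. $g\in\KL(X(\C))$. Replacing your ball-covering sketch by this application of the locally bounded lemmas on the product closes the gap and essentially reproduces the paper's argument.
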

\noindent In other words, the regular functions of $X^{L,+}_Y$ are given by the locally Lipschitz rational functions that become regular when composed by $\piC$. Note that, by taking $Y=X'$, we get $\C[X^L]\simeq \KL(X(\C))$, which is an algebraic version of Pham-Teissier's theorem, and we get that as in the analytic category, the Lipschitz saturation determines a variety up to birational biLipschitz equivalence. We then use these results to study, in subsection \ref{SubsecLocLipAlgMorphisms}, algebraic morphisms which are locally biLipschitz homeomorphisms on the closed points of the varieties. First, we state the universal property of the Lipschitz seminormalization of a variety $X$ in another variety $Y$: It is the biggest intermediate variety admitting a morphism to $X$ such that its restriction to the closed points is a locally biLipschitz homeomorphism. More precisely, we have 
\begin{rappel}[\ref{TheoPULipSemi}]
    Let $Y\to Z\to X$ be dominant morphisms with $\pi_{Z(\C)}:Z(\C)\to X(\C)$ locally biLipschitz, then $\pi_Z$ factors the morphism of Lipschitz seminormalization $X^{L,+}_Y \to X$.
\end{rappel}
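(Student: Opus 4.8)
The plan is to show that the structure morphism $X^{L,+}_Y\to X$ factors through $\pi_Z$, i.e. to produce a morphism $X^{L,+}_Y\to Z$ over $X$; equivalently, reading everything through coordinate rings inside $\C[Y]$, to prove the inclusion $\C[Z]\inj\C[X^{L,+}_Y]$ compatible with the two copies of $\C[X]$. For this I would use the description of $\C[X^{L,+}_Y]$ as the fibre product $\KL(X(\C))\times_{\KL(Y(\C))}\C[Y]$ from Theorem \ref{TheoLipSatEstLipRatio}: an element is a pair $(f,p)$ with $f\in\KL(X(\C))$, $p\in\C[Y]$, and $f\circ\piC=p$ on $Y(\C)$, and the projection to $\C[Y]$ identifies $\C[X^{L,+}_Y]$ with a subring of $\C[Y]$. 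So it suffices, given $p\in\C[Z]$ viewed in $\C[Y]$ via the dominant morphism $\rho:Y\to Z$, to construct a locally Lipschitz rational function $f$ on $X$ with $f\circ\piC=p$.

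First I would record that, since $\pi_{Z(\C)}$ is in particular a homeomorphism, $\pi_Z$ is generically injective, hence birational over $\C$ (treating irreducible components separately if $X$ and $Z$ are reducible); thus $\C(Z)=\C(X)$, and every $q\in\C[Z]$ determines a rational function on $X$. Given $p\in\C[Z]$, write $q\in\C[Z]$ for the corresponding element (so that $p=q\circ\rho_\C$ in $\C[Y]$) and set $f:=q\circ\pi_{Z(\C)}^{-1}:X(\C)\to\C$. Then $f$ is continuous, being a composite of continuous maps; and it is locally Lipschitz, since $q$ is polynomial (hence locally Lipschitz on $Z(\C)$) and $\pi_{Z(\C)}^{-1}$ is locally Lipschitz precisely because $\pi_{Z(\C)}$ is locally biLipschitz, so the composite is locally Lipschitz by the composition lemma of Section \ref{SectionPreli}. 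Because $f$ agrees on a dense open subset of $X$ with the rational function attached to $q$ under $\C(Z)=\C(X)$ and is a continuous extension of it to $X(\C)$, we get $f\in\KL(X(\C))$.

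Next I would check the compatibility on $Y(\C)$. From $\piC=\pi_{Z(\C)}\circ\rho_\C$ we compute $f\circ\piC=q\circ\pi_{Z(\C)}^{-1}\circ\pi_{Z(\C)}\circ\rho_\C=q\circ\rho_\C=p$. Hence $(f,p)\in\KL(X(\C))\times_{\KL(Y(\C))}\C[Y]\simeq\C[X^{L,+}_Y]$, and its image under the projection to $\C[Y]$ is $p$. Letting $q$ range over $\C[Z]$ gives $\C[Z]\inj\C[X^{L,+}_Y]$ as subrings of $\C[Y]$, compatibly with the inclusions of $\C[X]$. Dualizing, this inclusion of $\C$-algebras over $\C[X]$ is exactly a morphism $X^{L,+}_Y\to Z$ over $X$, which is the asserted factorization of $X^{L,+}_Y\to X$ through $\pi_Z$.

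I expect the only genuinely delicate points to be: (i) the passage between the abstract element $q\in\C[Z]\subseteq\C(Z)=\C(X)$ and the concrete continuous function $f=q\circ\pi_{Z(\C)}^{-1}$ on $X(\C)$ — one must make sure this $f$ really is the continuous extension of the rational function attached to $q$, which is where the framework of \cite{Bernard2021} on continuous rational functions enters; and (ii) the birationality of $\pi_Z$, for which one should note that a morphism whose restriction to $\C$-points is a homeomorphism is generically injective, hence birational over $\C$. Everything else is formal, and the argument simultaneously yields the \emph{universal property} reading of the statement: $X^{L,+}_Y$ dominates every intermediate $Z$ whose morphism to $X$ is locally biLipschitz on closed points.
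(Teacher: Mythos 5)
Your argument is correct, but it follows a genuinely different route from the paper's. The paper proves this statement by first invoking Lemma \ref{LemPIhomeoDoncPIfini} to see that $Z\to X$ is finite (so that $\C[Z]\inj\C[X'_Y]$) and then reducing to the universal property of the relative Lipschitz saturation, Theorem \ref{TheoPULipSatu}, applied to $X'_Y\to Z\to X$; that universal property rests on the isomorphism $\KL(X(\C))\simeq\KL(Z(\C))$ of Proposition \ref{PropVarphiIsoSSIBilip} (whose nontrivial direction uses the finiteness-based result of \cite{Bernard2021}, Proposition 4.11) together with the fibre-product description of Theorem \ref{TheoLipSatEstLipRatio}. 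You keep the fibre-product description but bypass the finiteness lemma and Proposition \ref{PropVarphiIsoSSIBilip} entirely: for $q\in\C[Z]$ you build $f=q\circ\pi_{Z(\C)}^{-1}$ by hand, note it is locally Lipschitz because $\pi_{Z(\C)}^{-1}$ is, use birationality of $\pi_Z$ to recognize $f$ as an element of $\K(X)$ (hence of $\KL(X(\C))$, since $f$ is its continuous extension), and check $f\circ\piC=p$ on $Y(\C)$, which produces the element of $\KL(X(\C))\times_{\KL(Y(\C))}\C[Y]\simeq\C[X^{L,+}_Y]$ over $p$ and hence the inclusion $\C[Z]\inj\C[X^{L,+}_Y]$ over $\C[X]$ that dualizes to the factorization. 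The paper's route never has to discuss birationality and delivers, in the same stroke, the uniqueness of the factorizing morphism and the fact that it is a locally biLipschitz homeomorphism; your route is shorter and more elementary for the bare factorization statement. The one step you should expand is the parenthetical treatment of reducible varieties: Lemma \ref{LemPIbijDoncPIbiration} (\cite{BFMQ} Proposition 5.3) is stated for irreducible varieties, so you need the (true, but not immediate) argument that a morphism bijective on closed points matches the irreducible components of $Z$ with those of $X$ and is generically injective on each, before you may identify $\K(Z)$ with $\K(X)$ and view $q$ as a rational function on $X$; your point (i), that $f$ really is the continuous extension of that rational function, then follows since the two agree on a Zariski-dense (hence Euclidean-dense) open set and $f$ is continuous.
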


We then discuss the role of the finiteness and birationality hypotheses on the morphisms in the universal property of the Lipschitz saturation. For example, we get that if $X$ is an irreducible algebraic variety, then $X^L$ is the biggest variety admitting a morphism to $X$ which is a locally biLipschitz homeomorphism on $X(\C)$.\\

\textbf{Acknowledgement:} The author is deeply grateful to Goulwen Fichou and Jean-Philippe Monnier for very useful discussions, and to Antoni Rangachev and Bernard Teissier for helping improve this paper. This research was partially supported by Plan d’investissements France 2030, IDEX UP ANR-18-IDEX-0001.

\section{Preliminaries}\label{SectionPreli}

Let $k$ be an algebraically closed field of characteristic zero and $X = \Spec(A)$ be an affine algebraic variety with $A$ a $k$-algebra of finite type. Let $k[X] := A$ denote the coordinate ring of $X$. We have $k[X]\simeq k[x_1,\dots,x_n]/I$ for an ideal $I$ of $k[x_1,\dots,x_n]$ and we will always assume $I$ to be radical. We recall that $X$ is irreducible if and only if $k[X]$ is a domain. A morphism $\pi : Y \to X$ between two affine varieties induces the morphism $\pi^* : k[X] \to k[Y]$ which is injective if and only if $\pi$ is dominant. We say that $\pi$ is of finite type (resp. is finite) if $\pi^*$ makes $k[Y]$ a $k[X]$-algebra of finite type (resp. a finite $k[X]$-module). 

The space $X$ is equipped with the Zariski topology, for which the closed sets are of the form $\Vcal(I) := \{ \p \in \Spec(k[X])\mid I\subset \p \}$ where $I$ is an ideal of $k[X]$. We will denote by $\Dcal(I)$ the complement of $\Vcal(I)$. We define $X(k) := \{ \m \in \spm(k[X])\mid \kappa(\m) = k \}$. Thus, if we write $k[X] = k[x_1,\dots,x_n] / I$, the elements of $X(k)$ can be seen as elements of $\spm(k[x_1,\dots,x_n])$ containing $I$. The Nullstellensatz gives us a Zariski homeomorphism between $X(k)$ and the algebraic set $\Zcal(I) := \{ x\in k^n \mid \forall f \in I\text{ }f(x)=0 \} \subset k^n$. If $k=\C$, we can then consider the strong topology on $X(\C)$ induced by the Euclidean topology of $\C^n$. We note $\pik : Y(k) \to X(k)$ the restriction of $\pi$ to $Y(k)$. We will add the prefix «$Z-$» before a property if it holds for the Zariski topology.

\subsection{Locally bounded rational functions on algebraic varieties}

In this subsection, we give some simple lemmas about locally bounded functions, and we recall the notion of normalization of an algebraic variety as well as its link with locally bounded rational functions.

Let $X$ be an affine variety. The total ring of fractions of $k[X]$ is denoted by $\K(X)$. The ring $\K(X)$ (which is a field when $X$ is irreducible) is also the ring of classes of rational fractions on $X$ and is called the ring of rational functions on $X$. It means that it represents the set of classes of regular functions $f$ on a Z-dense Z-open set $U$ of $X(k)$ with the equivalence relation $(f_1, U_1) \sim (f_2, U_2)$ if and only if $f_1 = f_2$ on $U_1 \cap U_2$. The class of a rational function $f$ admits a rational representation $(f,\Dom(f))$ which is maximal for the inclusion.\\

We now give the preliminary lemmas on locally bounded functions that we will need in the next section. For a topological space $E$ and an element $x$ in $E$, we denote by $\Vcal_E(x)$ the set of neighborhoods of $x$. For a complex affine variety, the notation $\Vcal_{X(\C)}(.)$ will always refer to the Euclidean topology. Let $\mathbb{K}$ be $\R$ or $\C$.

\begin{definition}\label{DefLocBounded}
    Let $E$ be a topological space and let $f : E\to \mathbb{K}$. We say that $f$ is locally bounded if 
    $$ \forall x_0\in E\text{\hspace{0.2cm} } \exists V\in\Vcal_{E}(x_0)\text{\hspace{0.2cm} } \exists C>0\text{\hspace{0.2cm} }\forall x\in V\text{\hspace{0.4cm} }|f(x)|<C.$$
    If $f$ is only defined on a subset of $E$, then we say that it is locally bounded on $E$ if it is locally bounded when extended by $0$ to all $E$.
\end{definition}

\begin{rmq}
    In the case where $f\in K(X)$ is a rational function on $X$, then $f$ is locally bounded on $X(\C)$ if and only if
    $$ \forall x_0\in X(\C)\text{\hspace{0.2cm} } \exists V\in\Vcal_{X(\C)}(x_0)\text{\hspace{0.2cm} } \exists C>0\text{\hspace{0.2cm} }\forall x\in V\cap\Dom(f)\text{\hspace{0.4cm} }|f(x)|<C.$$
\end{rmq}

The three following lemmas will be used later to say that a rational function is locally bounded if and only if it is locally bounded when composed by a finite morphism of algebraic varieties.

\begin{lemma}\label{LemLocBoundCircPIisLocBound}
    Let $\pi : F\to E$ be a continuous function between two topological spaces and let $f : E\to \mathbb{K}$ be a locally bounded function. Then $f\circ\pi$ is locally bounded on $F$.
\end{lemma}

\begin{proof}
    Let $y\in F$, we can consider $U\in \Vcal_{E}(\pi(y))$ and $C>0$ such that $|f_{|U}|<C$. Since $\pi$ is continuous, the set $V:=\pi^{-1}(U)$ is a neighborhood of $y$ in $F$ and we have $|f\circ\pi_{|V}|=|f_{|U}|<C$. So $f\circ\pi$ is locally bounded.
\end{proof}

\begin{lemma}\label{LemLocBoundOnCompact}
    Let $E$ be a topological space such that every point admits a compact neighborhood, and let $f:E\to \mathbb{K}$ be a function. Then $f$ is locally bounded if and only if $f$ is bounded on every compact set.   
\end{lemma}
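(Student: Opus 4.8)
The plan is to prove both directions of the equivalence, using the hypothesis that every point of $E$ has a compact neighborhood. Since $\mathbb{K}$ is $\R$ or $\C$, a subset of $\mathbb{K}$ is bounded in the usual sense precisely when it is contained in a ball of finite radius, and continuous images — in particular $|f|$ restricted to a compact set — need not be continuous, so I will argue directly from the definitions rather than invoking compactness of the image of $f$.

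First I would prove the easy implication: suppose $f$ is bounded on every compact subset of $E$. Fix $x_0 \in E$. By hypothesis there is a neighborhood $V \in \Vcal_E(x_0)$ and a compact set $K$ with $V \subset K$ (or, more carefully, a compact neighborhood $K$, which contains an open neighborhood $V$ of $x_0$). Since $f$ is bounded on $K$, there is $C>0$ with $|f(x)| < C$ for all $x \in K$, hence in particular for all $x \in V$. This is exactly the local boundedness condition at $x_0$, and since $x_0$ was arbitrary, $f$ is locally bounded.

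For the converse, suppose $f$ is locally bounded and let $K \subset E$ be compact. For each $x \in K$ choose, by local boundedness, an open neighborhood $V_x$ of $x$ and a constant $C_x > 0$ such that $|f| < C_x$ on $V_x$. The family $\{V_x\}_{x \in K}$ is an open cover of $K$, so by compactness there is a finite subcover $V_{x_1}, \dots, V_{x_m}$. Setting $C := \max(C_{x_1}, \dots, C_{x_m})$, every $y \in K$ lies in some $V_{x_i}$, whence $|f(y)| < C_{x_i} \leqslant C$. Thus $f$ is bounded on $K$.

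I do not expect any serious obstacle here; the statement is a standard compactness argument. The only point requiring a little care is the phrasing of the first direction: the hypothesis gives a compact \emph{neighborhood} of each point, so one should extract an open neighborhood inside it (or simply note that a compact neighborhood is itself a neighborhood in $\Vcal_E(x_0)$, which is all Definition \ref{DefLocBounded} asks for), and observe that the bound on $f$ over the compact set restricts to a bound over that neighborhood. If $f$ is only defined on a subset of $E$, the convention of Definition \ref{DefLocBounded} (extend by $0$) makes the statement and both arguments go through verbatim, since extending by $0$ does not increase the supremum of $|f|$ on any set by more than a harmless constant — in fact it does not increase it at all.
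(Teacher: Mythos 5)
Your proof is correct and follows essentially the same argument as the paper: in one direction the compact neighborhood of a point serves directly as the neighborhood witnessing local boundedness, and in the other a finite subcover of the compact set together with the maximum of the local constants gives the bound. The only cosmetic difference is that you pass to open neighborhoods before invoking compactness, which the paper leaves implicit.
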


\begin{proof}
    Let $f$ be bounded on every compact set and let $x\in E$. By assumption, we can consider a compact neighborhood of $x$. Then $f$ is bounded on this neighborhood. Conversely, suppose $f$ is locally bounded and let $K$ be a compact set of $E$. Since $f$ is locally bounded, we have that, for all $x\in K$, there exists $V_x\in \Vcal_E(x)$ and $C_x\in \R$ such that $|f_{|V_x}|<C_x$. Since $K$ is compact, we can consider $x_1,\dots,x_n\in K$ such that $$K=\bigcup_{i=1}^n V_{x_i}$$
    Then we get $|f_{|K}|<\max_i C_{x_i}$
\end{proof}

\begin{lemma}\label{LemLocBoundIfCircPIisLocBound}
    Let $\pi : F\to E$ be a proper and surjective map between two topological spaces such that every point of $E$ admits a compact neighborhood, and let $f:E\to \mathbb{K}$ be such that $f\circ\pi$ is locally bounded. Then $f$ is locally bounded.
\end{lemma}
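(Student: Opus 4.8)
The plan is to prove the contrapositive combined with a compactness argument, using Lemma~\ref{LemLocBoundOnCompact} to reduce "locally bounded" to "bounded on every compact set". So suppose $f$ is not locally bounded; by Lemma~\ref{LemLocBoundOnCompact} applied to $E$ (whose points have compact neighborhoods), there is a compact set $K\subset E$ on which $f$ is unbounded, hence a sequence $(x_n)$ in $K$ with $|f(x_n)|\to\infty$.

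Next I would lift this sequence through $\pi$. Since $\pi$ is surjective, pick $y_n\in F$ with $\pi(y_n)=x_n$. The key point is to place the $y_n$ inside a compact subset of $F$ so that $f\circ\pi$ being locally bounded (equivalently, bounded on compacts, again by Lemma~\ref{LemLocBoundOnCompact} — note $F$ also has the "compact neighborhood" property since $\pi$ is proper and $E$ does, or one can argue directly) yields a contradiction. Here is where properness enters: the set $K$ is compact in $E$, so $\pi^{-1}(K)$ is compact in $F$, and all the $y_n$ lie in $\pi^{-1}(K)$. Then $f\circ\pi$ is bounded on the compact set $\pi^{-1}(K)$, say by $C$, so $|f(x_n)| = |f(\pi(y_n))| = |(f\circ\pi)(y_n)| \le C$ for all $n$, contradicting $|f(x_n)|\to\infty$.

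Therefore $f$ must be locally bounded. One small technical point to address: Lemma~\ref{LemLocBoundOnCompact} requires the ambient space to have the property that every point admits a compact neighborhood; for $E$ this is a hypothesis, and for $F$ it follows because $\pi$ is proper — given $y\in F$, take a compact neighborhood $W$ of $\pi(y)$ in $E$, then $\pi^{-1}(W)$ is a compact neighborhood of $y$ in $F$ — so the lemma applies on both sides, and in fact we only strictly need it on $F$ to convert the hypothesis on $f\circ\pi$, and on $E$ to produce the bad compact set.

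The main obstacle, such as it is, is purely bookkeeping: making sure the $y_n$ are trapped in a single compact set rather than wandering off to infinity in $F$, which is exactly what properness of $\pi$ guarantees via $\pi^{-1}(K)$ compact. There is no real analytic difficulty; the proof is a standard "proper maps reflect boundedness" argument, and the only thing to be careful about is invoking Lemma~\ref{LemLocBoundOnCompact} with its hypothesis verified in each place it is used.
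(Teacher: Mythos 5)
Your proof is correct and uses exactly the same ingredients as the paper's: properness to get $\pi^{-1}(K)$ compact, surjectivity to lift points of $K$, and Lemma~\ref{LemLocBoundOnCompact} to pass between local boundedness and boundedness on compact sets. The only difference is cosmetic — the paper argues directly that $f$ is bounded on every compact set, while you phrase it as a contrapositive with a sequence — so this is essentially the same proof.
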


\begin{proof}
    Let $K$ be a compact set of $E$. Then $\pi^{-1}(K)$ is compact because $\pi$ is proper. Then, by hypothesis, there exists $C\in \R$ such that $|f\circ\pi_{|\pi^{-1}(K)}|<C$. Since $\pi$ is surjective, then for all $x\in K$, we can consider $y\in \pi^{-1}(x)$. So, for all $x\in K$, we get $|f(x)|=|f\circ\pi(y)|<C$ which means that $f$ is bounded on $K$. We have shown that $f$ is bounded on every compact set of $E$ so, by lemma \ref{LemLocBoundOnCompact}, $f$ is locally bounded on $E$.
\end{proof}

It is not surprising that, for rational functions defined on algebraic varieties, the property of being locally bounded behaves well by composition with a finite morphism because those functions are deeply connected with the normalization of the variety. Before specifying this connection, let us recall the notion of normalization of a variety and, thus, the notion of integral closure of a ring.

\begin{definition}
Let $A\inj B$ be an extension of rings.
\begin{enumerate}
    \item An element $b\in B$ is \textit{integral} over $A$ if there exists a monic polynomial $P\in A[X]$ such that $P(b)=0$.
    \item We call \textit{integral closure} of $A$ in $B$ and we write $A'_B$ the ring defined by $$A'_B := \Bigl\{ b\in B \mid b \text{ integral on }A\Bigr\}.$$
\end{enumerate}
\end{definition}

For an affine variety $X$, the integral closure of $\C[X]$ in $\K(X)$ is a finite $\C[X]$-module (see \cite{Eisen} Thm 4.14), so it is a $\C$-algebra of finite type. Thus, we can define the \textit{normalization} $X'$ of $X$ such that $X' = \Spec(\C[X]_{\K(X)}')$. We get a finite and birational morphism $\pi' : X' \to X$.
The normalization of $X$ is in fact the biggest affine variety finitely birational to $X$. It means that for every finite, birational morphism $\varphi : Y\to X$, there exists $\psi : X' \to Y$ such that $\pi' = \varphi \circ \psi$. Its link with locally bounded rational functions is given by the following classical proposition.

\begin{prop}[\cite{Lojasiewicz2013}]\label{PropLocalBoundIffIntegral}
    Let $X$ be an affine complex algebraic variety and let $f\in \K(X)$. Then the following properties are equivalent:
    \begin{enumerate}
        \item The function $f$ is locally bounded.
        \item The function $f$ is integral over $\C[X]$.
    \end{enumerate}
\end{prop}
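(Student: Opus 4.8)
The plan is to prove Proposition \ref{PropLocalBoundIffIntegral} by reducing the ``analytic'' statement to the algebraic notion of integral closure via the normalization, using the topological lemmas established above together with the fact that $f$ is integral over $\C[X]$ if and only if it is a regular function on the normalization $X'$.

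\medskip

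\noindent\textbf{Approach.} First I would establish the easy implication $(2)\Rightarrow(1)$. Suppose $f$ is integral over $\C[X]$, so $f$ satisfies a monic equation $f^d + a_{d-1}f^{d-1} + \cdots + a_0 = 0$ with $a_i\in\C[X]$. The $a_i$ are polynomial, hence continuous and therefore locally bounded on $X(\C)$ by a trivial argument (on a compact neighborhood they are bounded, and every point of $X(\C)$ has a compact neighborhood since $X(\C)$ is locally compact). On a small enough neighborhood $V$ of a given point $x_0$, say with $|a_i(x)|\leqslant M$ for all $i$ and all $x\in V\cap\Dom(f)$, the classical bound on roots of monic polynomials gives $|f(x)|\leqslant 1 + M$ for $x\in V\cap\Dom(f)$; hence $f$ is locally bounded. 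This step is routine and I would dispatch it quickly.

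\medskip

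\noindent\textbf{The converse.} For $(1)\Rightarrow(2)$, the key is to pass to the normalization. Write $\pi':X'\to X$ for the normalization morphism, which is finite, surjective, and birational, with $\piC':X'(\C)\to X(\C)$ proper (finite morphisms induce proper maps on $\C$-points) and surjective. Given $f\in\K(X)$ locally bounded, via birationality I view $f$ as an element of $\K(X')$, and the goal is to show $f\in\C[X']$, i.e.\ $f$ is regular on $X'$; since $\C[X']$ is the integral closure of $\C[X]$ in $\K(X)$, this is exactly statement (2). The plan here has two sub-steps. First, $f\circ\piC'$ is locally bounded on $X'(\C)$: this follows because $\piC'$ is continuous and, on the dense open locus where everything is defined, $f\circ\piC'$ agrees with the rational function $(\pi')^*f$ on $X'$; local boundedness transfers along the continuous map by Lemma \ref{LemLocBoundCircPIisLocBound} (applied carefully to the representative on $\Dom(f)$, using that $\piC'$ is proper and surjective so that the preimage of the complement of $\Dom(f)$ is nowhere dense). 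Second — and this is the heart of the matter — a locally bounded rational function on a \emph{normal} affine variety is regular: this is Hartogs-type / Riemann-extension statement, valid because on a normal variety the complement of $\Dom(f)$ has codimension $\geqslant 2$ (the non-regular locus of a rational function on a normal variety has pure codimension one only if it is a genuine pole, which contradicts boundedness), and a rational function regular in codimension one on a normal variety is regular everywhere by the algebraic Hartogs lemma. Combining: $f$ is regular on $X'$, hence integral over $\C[X]$.

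\medskip

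\noindent\textbf{Main obstacle.} The delicate point is the second sub-step: proving that local boundedness on $X'(\C)$ forces regularity on the normal variety $X'$. One must argue that the polar set of $f$ on $X'$ is empty: if $f$ were not regular at some point, then since $X'$ is normal the indeterminacy/polar locus of $f$ is a divisor, and along a general point of a codimension-one component where $f$ has a pole, $|f|$ blows up, contradicting local boundedness. Making this rigorous requires either citing the standard fact that $\C[X'] = \bigcap_{\mathrm{ht}\,\p=1}\Ocal_{X',\p}$ for normal $X'$ together with a valuation-theoretic estimate showing an order-$\geqslant 1$ pole is incompatible with boundedness near a smooth point of the pole divisor, or invoking \cite{Lojasiewicz2013} directly for this analytic fact; I expect the cleanest route is the former. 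Once that is in hand, the rest is bookkeeping about transferring local boundedness along the proper surjective finite morphism $\piC'$ using Lemmas \ref{LemLocBoundCircPIisLocBound}--\ref{LemLocBoundIfCircPIisLocBound} and the density of $\Dom(f)$.
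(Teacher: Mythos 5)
The paper does not actually prove Proposition \ref{PropLocalBoundIffIntegral}: it is quoted as a classical fact with a citation to \cite{Lojasiewicz2013}, so there is no in-paper argument to compare against. Your outline supplies the standard proof and it is essentially sound. The direction $(2)\Rightarrow(1)$ via the Cauchy root bound $|f(x)|\leqslant 1+\max_i|a_i(x)|$ on a compact neighborhood is correct. For $(1)\Rightarrow(2)$, the two sub-steps you isolate are the right ones: transfer of local boundedness to $X'(\C)$, and the Hartogs-type statement that a locally bounded rational function on a normal variety is regular, proved via $\C[X']=\bigcap_{\mathrm{ht}\,\p=1}\Ocal_{X',\p}$ together with the observation that a pole of order $m\geqslant 1$ along a height-one prime forces $|f|=|u|^{-m}|w|\to\infty$ near a general smooth point of the corresponding divisor (with $u$ a local equation and $w$ regular and nonvanishing there), contradicting boundedness. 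Two points of bookkeeping deserve to be made explicit if you write this up: first, with the paper's extension-by-zero convention, $f\circ\piC'$ only agrees with the pulled-back rational function on $\piC'^{-1}(\Dom(f))$, and you need this set to be Euclidean dense in the domain of the pullback (it is, being the complement of a proper Zariski-closed set, since $\pi'$ is finite and dominant) so that the bound propagates by continuity; note that for this direction only continuity of $\piC'$ is used (Lemma \ref{LemLocBoundCircPIisLocBound}), not properness or surjectivity, which belong to the converse transfer of Lemma \ref{LemLocBoundIfCircPIisLocBound}. Second, since $\K(X)$ is the total ring of fractions when $X$ is reducible, one should reduce to irreducible components: $X'$ is the disjoint union of the normalizations of the components, local boundedness is checked componentwise, and the integral closure of $\C[X]$ in $\K(X)$ is the product of the componentwise integral closures, so the normal-variety argument applies component by component. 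With these remarks your proof is complete and is the expected one behind the cited reference.
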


In other words, the regular functions of $X'$ correspond to the locally bounded rational functions on $X$.

\begin{ex}
    Let $X := \Spec(\C[x,y]/\langle y^2-x^2(x+1) \rangle)$ and let $f = y/x \in K(X)$. We have $f^2 -x-1=0$ if $x\neq 0$, thus $f$ is integral over $\C[X]$. Moreover $f$ is regular on $\Dcal(x)$ and, since $|f|^2 = |x+1|$, it is locally bounded when $x$ goes to $0$. The normalization of $X$ is given by $\pi' : X' \to X$ with $X' = \Spec(\C[x,y]/\langle y^2-x-1\rangle)$ and $\piC' : (x,y) \mapsto (x,xy)$. Hence $f\circ \piC' = y$ is indeed a polynomial function on $X'(\C)$.
\end{ex}

\subsection{Seminormalization and saturation}

Our study of the relative Lipschitz saturation for algebraic varieties will often use the notions of seminormalization and saturation that we present in this section. For a more complete presentation, we refer to Vitulli's survey \cite{VitulliSurvey2011} from 2011 about seminormalization. These notions have been studied recently in \cite{Bernard2021} and \cite{BFMQ} using continuous rational functions on complex algebraic varieties.

We start by recalling the definitions of seminormalization and saturation for general rings. 

\begin{definition}
Let $A \inj B$ be an extension of rings. We define the seminormalization of $A$ in $B$ as

$$A_B^{+} := \Bigr\{b \in A'_B \mid \forall \p \in \Spec(A) \text{, } b_{\p} \in A_{\p}+\Rad(B_{\p})\Bigr\}$$

\noindent where $\Rad(B_{\p})$ is the Jacobson radical of $B_{\p}$. The ring $A_B^+$ is the \textit{seminormalization} of $A$ in $B$. If $A = A_B^+$, then $A$ is said to be \textit{seminormal} in $B$.
\end{definition}

\begin{definition}
Let $A \inj B$ be an extension of rings. We define the saturation of $A$ in $B$ as

$$\widehat{A_B} := \Bigr\{b \in B \mid b\otimes_A 1 - 1\otimes_A b \in \NilRad(B\otimes_A B) \Bigr\}$$

where the nilradical $\NilRad$ denotes the ideal of nilpotent elements. The ring $\widehat{A_B}$ is the \textit{saturation} of $A$ in $B$. If $A = \widehat{A_B}$, then $A$ is said to be \textit{saturated} in $B$.
\end{definition}

\begin{rmq}
    Let $R$ be a ring. If $A$ and $B$ are $R$-algebras, then 
    $$\widehat{A_B} = \Bigr\{b \in B \mid b\otimes_R 1 - 1\otimes_R b \in \sqrt{\ker \phi_B} \Bigr\}$$
    where $\phi_B : B\otimes_{R} B \to B\otimes_{A}B$ send $b_1\otimes_{R}b_2$ to $b_1\otimes_A b_2$. We will tell more about this kernel when we will give the definition of the Lipschitz saturation which is closely related to this definition of the saturation.
\end{rmq}

\begin{rmq}\label{RmqSaturationEtSemiCorrespondentParfois}
    For rings having characteristic zero, we have $A^+_B = \widehat{A_{A'_B}}$. In particular, if the extension $A\inj B$ is integral, then $A_B^{+} = \widehat{A_B}$. See for example \cite{BFMQ} Proposition 3.2. 
\end{rmq}

We recall now the universal property of the seminormalization of rings using the notion of "subintegral extensions" of rings.

\begin{definition}
    An extension of rings $A\inj B$ is called \textit{subintegral} if it is integral and if the induced map $\Spec(B)\to\Spec(A)$ is bijective and equiresidual (it gives isomorphisms between the residue fields).
\end{definition}

The property of being subintegral is transitive.

\begin{prop}[\cite{T} Lemma 1.2]\label{PropSousExtSubEstSub}
Let  $A \inj C \inj B$ be integral extensions of rings. Then the following properties are equivalent 
\begin{enumerate}
    \item[1)] The extension $A\inj B$ is subintegral.
    \item[2)] The extensions $A\inj C$ and $C\inj B$ are subintegral.
\end{enumerate}
\end{prop}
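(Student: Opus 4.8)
The plan is to prove the equivalence of the two conditions in Proposition~\ref{PropSousExtSubEstSub} by unwinding the definition of ``subintegral'' into its three constituent parts --- integrality, bijectivity of $\Spec(B)\to\Spec(A)$, and equiresiduality --- and checking each part separately, using the transitivity (already asserted just before the statement) of each of these three properties along a tower $A\inj C\inj B$.

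First I would handle the direction $2)\Rightarrow 1)$, which is the more routine one. Assume $A\inj C$ and $C\inj B$ are both subintegral. Integrality of $A\inj B$ is immediate since a composite of integral extensions is integral. For the spectral map, write $\Spec(B)\to\Spec(C)\to\Spec(A)$; each arrow is a bijection by hypothesis, so the composite is a bijection. For equiresiduality, given $\q\in\Spec(B)$ lying over $\p'\in\Spec(C)$ lying over $\p\in\Spec(A)$, the hypotheses give $\kappa(\p)\xrightarrow{\sim}\kappa(\p')\xrightarrow{\sim}\kappa(\q)$, and the composite of these residue-field maps is the canonical map $\kappa(\p)\to\kappa(\q)$, hence an isomorphism. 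So $A\inj B$ is subintegral.

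The substantive direction is $1)\Rightarrow 2)$. Assume $A\inj B$ is subintegral; we are told all three extensions in the tower $A\inj C\inj B$ are integral (this is part of the hypothesis of the proposition, so I may use it freely). The key point is that $\Spec(B)\to\Spec(A)$ factors as $\Spec(B)\to\Spec(C)\to\Spec(A)$, and both maps are surjective by the going-up theorem (integral extensions have surjective spectral maps onto $\Spec$ of the base, and here injectivity of the ring maps guarantees dominance). Since the composite is injective and each factor is surjective, each factor must also be injective, hence bijective. That settles bijectivity for both $A\inj C$ and $C\inj B$. For equiresiduality: fix a chain of primes $\q\supset$ (contraction to $C$) $=\p'\supset$ (contraction to $A$) $=\p$, and consider the tower of residue-field extensions $\kappa(\p)\hookrightarrow\kappa(\p')\hookrightarrow\kappa(\q)$ (these are field \emph{extensions} because the ring maps are injective and integral, so the residue maps are injective). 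The composite $\kappa(\p)\hookrightarrow\kappa(\q)$ is an isomorphism by the subintegrality of $A\inj B$; an isomorphism that factors through an intermediate field forces both intermediate maps to be isomorphisms. Hence $A\inj C$ and $C\inj B$ are equiresidual, and combined with integrality and bijectivity they are subintegral.

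The main obstacle --- really the only place requiring care --- is the surjectivity of the spectral maps $\Spec(B)\to\Spec(C)$ and $\Spec(C)\to\Spec(A)$ and, dually, the injectivity of the residue-field maps: these rest on the lying-over/going-up theorems for integral extensions together with the injectivity of the ring homomorphisms, and one should double-check that ``integral extension'' in the paper's sense (via a monic polynomial) indeed yields these standard conclusions. Once surjectivity on spectra is in hand, the rest is the elementary observation that if a composite of set maps is a bijection and each factor is a surjection, then each factor is a bijection, applied once on spectra and once on residue fields (in the latter case with ``surjection/bijection'' replaced by ``injection/isomorphism'', which works dually since there each factor is injective). I would cite \cite{T} for the original argument and keep the write-up to a few lines.
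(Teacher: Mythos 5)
Your argument is correct. Note that the paper itself gives no proof of this proposition --- it is quoted directly from Traverso (\cite{T}, Lemma 1.2) --- and what you wrote is exactly the standard argument behind that lemma: integrality, bijectivity of the spectral maps, and equiresiduality are each checked separately along the tower, with lying-over supplying surjectivity of $\Spec(B)\to\Spec(C)$ and $\Spec(C)\to\Spec(A)$, and the elementary fact that a bijective composite with surjective (resp.\ injective) factors forces each factor to be bijective doing the rest. The only point worth making explicit in a write-up is the one you use implicitly: surjectivity of $\Spec(B)\to\Spec(C)$ guarantees that \emph{every} prime $\p'\in\Spec(C)$ sits in a chain $\q\supset\p'\supset\p$ with $\q\in\Spec(B)$, so the residue-field argument really does cover all primes needed for equiresiduality of $A\inj C$ and $C\inj B$.
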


The universal property of the seminormalization states that $A^+_B$ is the biggest subintegral subextension of $A$ in $B$.

\begin{theorem}[\cite{T} Universal property of seminormalization]\label{TheoPUSNanneaux}
Let $A\inj B$ be an integral extension of rings. Then $A\inj A^+_B \inj B$ is the unique subintegral subextension such that, for every intermediate subintegral subextension $A\inj C\inj B$, the image of $C$ by the injection $C\inj B$ is contained in $A_B^+$.
$$\xymatrix{
   A \ar@{_{(}->}[rrd]_{subint.} \ar@{^{(}->}[rr]& & A_B^+ \ar@{^{(}->}[rr]^{inclusion} & & B \\
    && C \ar@{^{(}.>}[u] \ar@{_{(}->}[urr] &&
}$$
\end{theorem}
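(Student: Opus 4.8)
The plan is to prove the universal property of seminormalization by combining the definition of $A^+_B$ with Proposition~\ref{PropSousExtSubEstSub}. First I would establish that $A \inj A^+_B$ is itself subintegral. Since $A^+_B \subset A'_B$, the extension $A \inj A^+_B$ is integral; the condition $b_\p \in A_\p + \Rad(B_\p)$ for every prime $\p$ is precisely what is needed to force the map $\Spec(A^+_B) \to \Spec(A)$ to be bijective and equiresidual. Concretely, localizing at $\p$ and passing to the reduced fiber, one checks that each element of $A^+_B$ becomes, modulo the radical, an element of the base, so no splitting of primes and no residue field extension can occur above $\p$. This is the part where the precise shape of the definition is used, and I would expect to invoke \cite{T} or \cite{BFMQ} for the detailed verification rather than redo it.

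Next I would show the maximality: given any intermediate subintegral subextension $A \inj C \inj B$, I must show $C \subseteq A^+_B$ inside $B$. Since $C$ is integral over $A$, every element of $C$ lies in $A'_B$, so it remains to check the radical condition. Fix $\p \in \Spec(A)$ and an element $c \in C$; localizing, I want $c_\p \in A_\p + \Rad(B_\p)$. Because $A \inj C$ is subintegral, the unique prime $\q$ of $C$ over $\p$ satisfies $\kappa(\q) = \kappa(\p)$, so $c$ maps into $\kappa(\p)$ modulo $\q$, i.e. $c \equiv a \pmod{\q}$ for some $a \in A$ after localizing; then $c - a \in \q_\p$, and since subintegrality of $C \inj C'_B$ (or directly $C \inj B$ when available) forces $\q_\p$ to consist of elements that are in the radical of $B_\p$ over that prime, one gets $c_\p - a_\p \in \Rad(B_\p)$. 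The transitivity statement (Proposition~\ref{PropSousExtSubEstSub}) is what lets me go from ``$A \inj C$ subintegral'' and ``$A \inj B$ integral'' to the relevant statements about $C \inj C^+_B$ and chase the radical condition prime-by-prime.

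Finally, uniqueness: any subintegral subextension $A \inj D \inj B$ that contains (the image of) every intermediate subintegral subextension must in particular contain $A^+_B$ (which is subintegral over $A$ by the first step), and is itself such an intermediate subintegral subextension, hence is contained in $A^+_B$ by the maximality step; thus $D = A^+_B$. The main obstacle is the first paragraph: verifying carefully that the ideal-theoretic condition ``$b_\p \in A_\p + \Rad(B_\p)$ for all $\p$'' is equivalent to bijectivity-plus-equiresiduality of $\Spec(A^+_B) \to \Spec(A)$, which requires a somewhat delicate localization and fiber argument. Since this is exactly Traverso's classical result, I would present the proof at the level of citing \cite{T} Lemma~1.2 and the definition, and spend the written argument mainly on assembling these pieces into the stated universal property, rather than re-deriving the equivalence from scratch.
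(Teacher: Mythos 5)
The paper itself gives no proof of this statement: it is quoted from Traverso \cite{T}, so there is no internal argument to compare yours with, and I can only assess the sketch on its own terms. Its overall shape (subintegrality of $A\inj A^+_B$, then maximality of $A^+_B$ among subintegral subextensions, then uniqueness as a formal consequence) is the standard one, and the uniqueness paragraph is fine. The one place where you argue in detail, however, contains a wrong justification at the crucial point. To get $c_\p-a_\p\in\Rad(B_\p)$ you appeal to ``subintegrality of $C\inj C'_B$ (or directly $C\inj B$)''. Since $A\inj B$ is integral we have $C'_B=B$, and $C\inj B$ is \emph{not} subintegral in general (take $C=A^+_B\subsetneq B$), nor is any such hypothesis available. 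What actually makes the step work is plain integrality together with the bijectivity you already have from $A\inj C$ being subintegral: $\Rad(B_\p)$ is the intersection of the maximal ideals of $B_\p$, each of which lies over $\p A_\p$ because $A_\p\inj B_\p$ is integral; each such maximal ideal contracts to a prime of $C_\p$ over $\p A_\p$, which by bijectivity of $\Spec(C)\to\Spec(A)$ must be $\q_\p$; hence $\q_\p\subseteq\Rad(B_\p)$. Combined with equiresiduality, which gives $C_\p/\q_\p=\kappa(\q)=\kappa(\p)=A_\p/\p A_\p$ and thus produces the element $a\in A_\p$ with $c_\p-a\in\q_\p$, this yields $c_\p\in A_\p+\Rad(B_\p)$. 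The fix is short, but the reason you wrote down would not survive scrutiny.

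The second point is that the other half of the theorem, namely that $A\inj A^+_B$ is itself subintegral, is the substantive content of Traverso's result, and you do not prove it; you defer it to \cite{T}/\cite{BFMQ}. Given that the paper also merely cites \cite{T} for the entire statement, deferring is defensible, but be aware that this is where all the work lies (it is not a routine localization-and-fiber check: one must in particular see that $A^+_B$ is a ring and that no splitting or residue extension occurs over any prime), and that the specific reference you name, \cite{T} Lemma~1.2, is the transitivity statement recalled in Proposition~\ref{PropSousExtSubEstSub}; it does not supply this subintegrality, so the citation as given does not cover the gap.
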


For algebraic varieties, the seminormalization of the coordinate ring of a variety in another produces a new variety called the "relative seminormalization". We could also consider the saturation of a coordinate ring in another, but we don't know if it always produces a new algebraic variety.

\begin{prop}[\cite{BFMQ} Proposition 4.10]\label{PropConstantFibres}
    Let $\pi : Y\to X$ be a dominant morphism of complex affine varieties. Then the coordinate ring of the seminormalization of $X$ in $Y$ is given by 
    $$ \C[X^+_Y] := \C[X]^+_{\C[Y]} = \Bigr\{p \in \C[X'_Y] \mid p \text{ is constant on the fibers of }\pik' \Bigr\} $$
    where $\pik' : X'_Y \to X$ is relative normalization morphism.
\end{prop}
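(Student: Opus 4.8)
The statement to establish is that, for a dominant morphism $\pi : Y \to X$ of complex affine varieties, the ring $\C[X]^+_{\C[Y]}$ coincides with the subring of $\C[X'_Y]$ consisting of functions that are constant on the fibers of $\pik' : X'_Y \to X$. Since the extension $\C[X] \inj \C[X'_Y]$ is integral (it is the relative normalization) and we are in characteristic zero, Remark~\ref{RmqSaturationEtSemiCorrespondentParfois} gives $\C[X]^+_{\C[Y]} = \C[X]^+_{\C[X'_Y]} = \widehat{\C[X]_{\C[X'_Y]}}$, so it suffices to work with the integral extension $A := \C[X] \inj B := \C[X'_Y]$ and identify $A^+_B$ directly. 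The plan is to unwind the definition $A^+_B = \{ b \in B \mid \forall \p \in \Spec(A),\ b_\p \in A_\p + \Rad(B_\p)\}$ and translate the local condition at each $\p$ into the geometric statement that $b$ takes a single value on the fiber $\pik'^{-1}(\m)$ for each closed point $\m$, using that $X'_Y \to X$ is a finite morphism so all fibers are finite sets of closed points.

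First I would set up the dictionary. Because $\pi' : X'_Y \to X$ is finite, $B$ is a finite $A$-module, and for a maximal ideal $\m \subset A$ the localization $B_\m$ is a semilocal ring whose maximal ideals correspond bijectively to the points of the fiber $\pik'^{-1}(\m)$; moreover, since $k = \C$ is algebraically closed and the varieties are of finite type, every residue field in sight is $\C$. Thus $B_\m / \Rad(B_\m) \cong \prod_{i} \C$, the product being over the (finitely many) points $y_1, \dots, y_r$ of the fiber over $\m$, and the quotient map sends a function $b$ to the tuple $(b(y_1), \dots, b(y_r))$ of its values. The condition $b_\m \in A_\m + \Rad(B_\m)$ then reads: the image of $b$ in $\prod_i \C$ lies in the image of $A_\m$, which is the diagonal $\C$ (because $A_\m \to B_\m/\Rad(B_\m)$ factors through $A_\m/\m A_\m \cong \C$). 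Hence $b_\m \in A_\m + \Rad(B_\m)$ if and only if $b(y_1) = \cdots = b(y_r)$, i.e. $b$ is constant on the fiber $\pik'^{-1}(\m)$.

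Next I would handle the passage from all prime ideals $\p$ to just the maximal ones. The inclusion $A^+_B \subseteq \{ b \mid b \text{ constant on fibers of } \pik'\}$ follows immediately by restricting the defining condition to maximal ideals and applying the dictionary above. For the reverse inclusion, suppose $b \in B$ is constant on every fiber of $\pik'$; I need $b_\p \in A_\p + \Rad(B_\p)$ for every $\p \in \Spec(A)$, not merely the closed ones. The standard device is that $\Rad$ and the whole condition are compatible with further localization, and that a prime $\p$ of $A$ localizes to a situation controlled by the maximal ideals of $A_\p$; since $A_\p + \Rad(B_\p)$ is an $A_\p$-submodule of $B_\p$, it is enough — by a local-global principle for modules, checking at the maximal ideals of $A_\p$, equivalently at the maximal ideals of $A$ containing $\p$ — to verify the membership after localizing at maximal ideals, which reduces us to the closed-point case already treated. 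Alternatively, one can invoke Proposition~\ref{PropSousExtSubEstSub} together with the universal property Theorem~\ref{TheoPUSNanneaux}: the subring $R := \{ b \in B \mid b \text{ constant on fibers of } \pik'\}$ is readily checked to be a ring, the extension $A \inj R$ is integral with $\Spec(R) \to \Spec(A)$ bijective and equiresidual (bijectivity and equiresiduality being exactly the ``constant on fibers, with value in $\C$'' condition, spread out over $\Spec$ via finiteness), hence $A \inj R$ is subintegral and so $R \subseteq A^+_B$ by the universal property; combined with the first inclusion this gives equality.

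The main obstacle I anticipate is the careful verification that the local condition at a non-maximal prime $\p$ adds nothing beyond what the closed points already impose — i.e. making the reduction ``all primes $\Rightarrow$ maximal primes'' rigorous, keeping track of how $\Rad(B_\p)$ behaves (it is \emph{not} in general the extension of $\Rad(B)$, but its maximal ideals still sit over those of $A_\p$ by integrality) and how tensoring/localizing interacts with the Jacobson radical of a semilocal ring. The cleanest route is almost certainly to avoid this reduction entirely and argue via the universal property of seminormalization as in the alternative above, so that the only genuinely computational input is the fiberwise description of $B_\m/\Rad(B_\m)$ for $\m$ maximal, which is routine given that the ground field is algebraically closed and $\pi'$ is finite.
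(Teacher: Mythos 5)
A preliminary remark: the paper itself contains no proof of this proposition — it is recalled verbatim from \cite{BFMQ} (Proposition 4.10) — so there is no in-text argument to compare yours with; I can only assess your proposal on its own terms.

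Your reduction to the integral extension $\C[X]\inj \C[X'_Y]$ via Remark \ref{RmqSaturationEtSemiCorrespondentParfois}, and your dictionary at a closed point $\m$ (the localization $B_\m$ is semilocal, $B_\m/\Rad(B_\m)\cong \prod_i \C$ with $A_\m$ landing on the diagonal, hence $b_\m\in A_\m+\Rad(B_\m)$ if and only if $b$ is constant on the fiber over $\m$) are correct, and they do give the inclusion of $\C[X]^+_{\C[Y]}$ into the ring $R$ of functions constant on the closed fibers. The genuine gap is in the reverse inclusion, i.e.\ the passage from closed points to all primes, and neither of your two devices settles it as written. The first one fails: $A_\p$ is local, so ``checking at the maximal ideals of $A_\p$'' is vacuous, and there is no local--global principle to invoke because $\p\mapsto \Rad(B_\p)$ is not the localization at $\p$ of any fixed submodule of $B$ (it is the intersection of the primes of $B$ lying over $\p$, which varies with $\p$); the conditions at the maximal ideals containing $\p$ simply do not localize to the condition at $\p$. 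Your second route (show $A\inj R$ is subintegral and apply Theorem \ref{TheoPUSNanneaux}) is the right one, and your separation argument does prove that $\Spec(R)\to\Spec(A)$ is bijective and equiresidual \emph{over the closed points}; but the claim that this ``spreads out over $\Spec$ via finiteness'' is precisely the nontrivial input, not a formality — equiresiduality at non-closed primes genuinely uses characteristic zero (in characteristic $p$ the Frobenius is finite and bijective on closed points yet not subintegral). In the present paper this missing step is exactly Proposition \ref{PropSubEquiMemeFctRatioCont}, equivalence of (1) and (3): since $R$ is a reduced finite $A$-module, hence the coordinate ring of a variety finite over $X$, bijectivity of the induced map on closed points already implies subintegrality of $A\inj R$. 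Citing that proposition (or reproving the closed-points-to-all-points implication) closes the gap; without it, your proof is incomplete at its decisive step.
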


The seminormalization $X^+$ of a variety $X$ is the variety obtained by taking $Y=X'$ in the previous proposition. In \cite{Bernard2021}, it is shown that the seminormalization is deeply related to rational continuous functions defined on the closed points of the variety.

\begin{definition}
    Let $X$ be a complex affine variety. We denote by $\KO(X(\C))$ the set of rational functions of $\K(X)$ that extend continuously to $X(\C)$ for the Euclidean topology.
\end{definition}

\begin{ex}
Let $X := \Spec(\C[x,y]/\langle y^2-x^3\rangle)$. The rational function $y/x$ belongs to $\KO(X(\C))$ because it extends continuously by $0$ in $(0,0)$ since $|y/x| = |x|^{1/2} \to 0$ when $(x,y) \to 0$.
\end{ex}

We can reinterpret subintegral extensions of coordinate rings with continuous rational functions or with topological conditions on $\piC$.

\begin{prop}[\cite{Bernard2021} Proposition 4.12]\label{PropSubEquiMemeFctRatioCont}
Let $\pi : Y \to X$ be a finite morphism of affine varieties. Then the following properties are equivalent:
\begin{enumerate}
    \item[1)] The extension $\pi^* : \C[X] \inj \C[Y]$ is subintegral.
    \item[2)] The morphism $\pi^* : \KO(X(\C)) \to \KO(Y(\C))$ is an isomorphism.
    \item[3)] The morphism $\piC$ is a bijection.
    \item[4)] The morphism $\piC$ is a homeomorphism for the Euclidean topology.
\end{enumerate}
\end{prop}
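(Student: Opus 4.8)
The plan is to prove the chain of implications $1)\Rightarrow 2)\Rightarrow 3)\Rightarrow 4)\Rightarrow 1)$ (with the observation that $3)\Leftrightarrow 4)$ for finite morphisms by properness, so the real content is $1)\Rightarrow 2)\Rightarrow 3)$ and $3)\Rightarrow 1)$). Throughout, $\pi$ is finite, hence $\piC:Y(\C)\to X(\C)$ is a proper, closed, surjective (by dominance) map, and these topological properties will be the bridge between the algebra and the topology.

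First I would prove $1)\Rightarrow 2)$. Assume $\C[X]\inj\C[Y]$ is subintegral. Then $\Spec(\pi^*)$ is a bijection with isomorphisms on residue fields, and in particular $\piC$ is a bijection. Given $f\in\KO(Y(\C))$, I want to show $f$ descends to an element of $\KO(X(\C))$. Since $\piC$ is a continuous bijection between spaces that are proper over each other (finite morphism), it is a homeomorphism, so $f\circ\piC^{-1}$ is a well-defined continuous function on $X(\C)$; the point is that it is \emph{rational} on $X$. For this I would use that a subintegral extension is birational (bijective and equiresidual forces the function field extension to be trivial, since it is generic), hence $\K(X)\xrightarrow{\sim}\K(Y)$ identifies the two total rings of fractions; under this identification $f\in\K(Y)=\K(X)$ and the continuous extension is exactly $f\circ\piC^{-1}$. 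Injectivity of $\pi^*$ on $\KO$ is immediate since $\pi$ is dominant. Conversely, surjectivity onto $\KO(Y(\C))$ is precisely the descent just described, so $\pi^*:\KO(X(\C))\to\KO(Y(\C))$ is an isomorphism.

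Next, $2)\Rightarrow 3)$. The strategy is contrapositive: if $\piC$ is not a bijection I produce a continuous rational function on $Y$ not coming from $X$. If $\piC$ is not injective, pick $y_1\neq y_2$ with $\piC(y_1)=\piC(y_2)$; since $\C[Y]$ separates points of $Y(\C)$, there is a polynomial function $p\in\C[Y]\subset\KO(Y(\C))$ with $p(y_1)\neq p(y_2)$, which cannot be of the form $q\circ\piC$ for $q\in\KO(X(\C))$, contradicting surjectivity of $\pi^*$ on $\KO$. If $\piC$ fails to be surjective, then since $\piC$ is closed (finiteness), its image is a proper Zariski-closed subset — but $\pi$ dominant forces the image to be Zariski-dense, a contradiction; so non-surjectivity cannot occur, and only injectivity is the real issue. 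Then $3)\Leftrightarrow 4)$: a continuous bijection which is proper and closed onto a locally compact Hausdorff space is a homeomorphism, so $3)\Rightarrow 4)$, and $4)\Rightarrow 3)$ is trivial.

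Finally, $3)\Rightarrow 1)$, which I expect to be the main obstacle. Assuming $\piC$ is a bijection, I must show $\C[X]\inj\C[Y]$ is subintegral, i.e. integral (given: $\pi$ finite), with $\Spec(\pi^*)$ bijective and equiresidual. Bijectivity of $\Spec(\pi^*)$: for finite morphisms, closed points are dense in fibers and in characteristic zero (algebraically closed field) the closed points are exactly $X(\C)$, $Y(\C)$; using going-up and the lying-over theorem together with the bijectivity of $\piC$ on closed points one deduces $\Spec(\pi^*)$ is bijective — here the delicate part is passing from closed points to all primes, for which I would invoke that a finite morphism inducing a bijection on closed points of varieties over an algebraically closed field induces a bijection on $\Spec$ (e.g. because fibers are finite schemes supported at single points, and by Nullstellensatz a reduced finite scheme over $\C$ with one $\C$-point is $\Spec\C$). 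Equiresidual: the residue fields at closed points are all $\C$ on both sides (Nullstellensatz), so the residue extensions at closed points are trivial; at the generic points, a finite morphism of irreducible varieties bijective on closed points must be birational (the degree equals the generic number of points in a fiber, which is $1$), giving triviality of the generic residue extension too; reducing to the irreducible components handles the general case. This assembles to subintegrality, closing the loop. The crux throughout is the interplay between the finiteness of $\pi$ (giving properness/closedness of $\piC$ and integrality) and the Nullstellensatz identification of $X(\C)$, $Y(\C)$ with the closed points, which lets topological bijectivity control the scheme-theoretic fibers.
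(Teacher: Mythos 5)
The paper does not prove this proposition; it is imported from \cite{Bernard2021} (Proposition 4.12), so your argument can only be judged on its own merits. Your cycle $1)\Rightarrow 2)\Rightarrow 3)\Leftrightarrow 4)$ plus $3)\Rightarrow 1)$ is sound in outline, and the easier implications are handled correctly: properness of $\piC$ in the Euclidean topology for finite morphisms, the homeomorphism criterion for proper continuous bijections, separation of closed points by $\C[Y]$, surjectivity of $\piC$ from finiteness and dominance, and the identification $\K(X)\simeq\K(Y)$ from subintegrality in $1)\Rightarrow 2)$ are all fine.

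The genuine gap is in $3)\Rightarrow 1)$, exactly where you locate the crux, and the justification you offer does not close it. Subintegrality demands that $\Spec(\C[Y])\to\Spec(\C[X])$ be bijective and equiresidual at \emph{every} prime, but your parenthetical argument (``fibers are finite schemes supported at single points; a reduced finite scheme over $\C$ with one $\C$-point is $\Spec \C$'') only treats fibers over closed points: over a non-closed prime $\p$ the fiber is a finite scheme over the non-algebraically-closed field $\kappa(\p)$, and injectivity of $\piC$ says nothing directly about it — that is precisely what must be proved. Similarly, your equiresiduality check covers only closed points and minimal primes, while intermediate primes are left untouched. The standard repair: if $\q_1\neq\q_2$ lie over $\p$ (they are incomparable by integrality), then $\Vcal(\q_1)$ and $\Vcal(\q_2)$ both map onto $\Vcal(\p)$ (finite morphisms are Zariski-closed), their intersection has image of strictly smaller dimension, and over a closed point of $\Vcal(\p)$ outside that image one finds two distinct closed preimages, contradicting injectivity of $\piC$; and for equiresiduality at an arbitrary $\q$ over $\p$, apply your generic degree count (valid in characteristic zero) to the finite morphism $\Vcal(\q)\to\Vcal(\p)$, which is again bijective on closed points. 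Alternatively, within the toolkit this paper already quotes, $3)\Rightarrow 1)$ follows at once: for $\pi$ finite, Proposition \ref{PropConstantFibres} identifies $\C[X]^+_{\C[Y]}$ with the elements of $\C[Y]$ constant on the fibers of $\piC$, so injectivity of $\piC$ gives $\C[X]^+_{\C[Y]}=\C[Y]$, and the extension is subintegral by Theorem \ref{TheoPUSNanneaux}.
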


Finally, we state the link between relative seminormalization and continuous rational functions.

\begin{theorem}[\cite{BFMQ} Theorem 4.11]\label{TheoIsoSchemaSN}
Let $\pi : Y\to X$ be a dominant morphism of complex affine varieties. Then 
$$ \C[X^+_Y] \simeq \KO(X(\C))\times_{\KO(Y(\C))} \C[Y]  $$
where the right-hand side stands for the fiber product of the rings.
\end{theorem}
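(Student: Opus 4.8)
The plan is to exhibit mutually inverse ring homomorphisms between the two sides. To fix notation, factor $\pi$ canonically as $Y \xrightarrow{\pi''} X'_Y \xrightarrow{\pi'} X$, so that $\C[X] \inj \C[X'_Y] \inj \C[Y]$ with $\pi'$ the relative normalization morphism; since $\C[X] \subseteq \C[X'_Y]$, the map $\pi'$ is dominant, hence, being finite, surjective. By Proposition~\ref{PropConstantFibres} I regard $\C[X^+_Y]$ as the ring of $q \in \C[X'_Y]$ constant on the fibres of $\pi'_k$, and I unwind the right-hand side: an element of $\KO(X(\C)) \times_{\KO(Y(\C))} \C[Y]$ is a pair $(f,p)$ with $f \in \KO(X(\C))$, $p \in \C[Y]$ and $\pi^* f = p$ in $\KO(Y(\C))$, where $\pi^* : \KO(X(\C)) \to \KO(Y(\C))$ is pullback and $\C[Y] \inj \KO(Y(\C))$ is the inclusion of the regular functions.

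To go from $\C[X^+_Y]$ to the fibre product, take $q \in \C[X^+_Y]$ and descend it along $\pi'_\C : X'_Y(\C) \to X(\C)$. This map is finite, hence proper, and continuous and surjective between locally compact Hausdorff spaces, so it is closed, hence a topological quotient map; as the regular function $q$ is constant on its fibres, it factors through a continuous function $f : X(\C) \to \C$. To see $f$ is rational, restrict over a dense Zariski-open $U \subseteq X$ over which $\pi'$ is finite and flat (generic flatness) and descend $q|_{\pi'^{-1}(U)}$ there (e.g.\ via a normalized trace); then $f|_U$ is regular, so $f \in \K(X)$ and hence $f \in \KO(X(\C))$. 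Using $\piC = \pi'_\C \circ \pi''_\C$ and $f \circ \pi'_\C = q$, the image of $q$ in $\C[Y]$ equals $\pi^* f$, so $(f,q)$ lies in the fibre product; this assignment is an injective ring homomorphism.

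To go the other way, take $(f,p)$ in the fibre product. Since $f$ extends continuously to $X(\C)$ and every point of $X(\C)$ has a compact neighbourhood, $f$ is locally bounded (Lemma~\ref{LemLocBoundOnCompact}), hence integral over $\C[X]$ (Proposition~\ref{PropLocalBoundIffIntegral}); applying $\pi^* : \KO(X(\C)) \to \KO(Y(\C))$ to a monic relation for $f$ and using $\pi^* f = p$ while the coefficients land in $\C[Y]$ shows $p$ is integral over $\C[X]$ inside $\C[Y]$, i.e.\ $p \in \C[X'_Y]$. Writing $\tilde p$ for $p$ as a regular function on $X'_Y(\C)$, the identities $\tilde p \circ \pi''_\C = p = (f \circ \pi'_\C) \circ \pi''_\C$, the density of the image of $\pi''_\C$, and the continuity of $\tilde p$ and of $f \circ \pi'_\C$ force $\tilde p = f \circ \pi'_\C$ on all of $X'_Y(\C)$; since $f \circ \pi'_\C$ is constant on the fibres of $\pi'_\C$, so is $\tilde p$, whence $p \in \C[X^+_Y]$ by Proposition~\ref{PropConstantFibres}. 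Send $(f,p)$ to $p$.

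Finally, I check the two maps are mutually inverse: $q \mapsto (f,q) \mapsto q$ is the identity by construction, and in $(f,p) \mapsto p \mapsto (f',p)$ the function $f'$ is the descent of $\tilde p = f \circ \pi'_\C$ along the surjection $\pi'_\C$, so uniqueness of the descent gives $f'=f$; that both are ring homomorphisms is immediate from the construction. The step I expect to be the main obstacle is the descent of $q$ in the second paragraph: one must show that a regular function on $X'_Y$ constant on the fibres of the finite morphism $\pi'$ descends to a function on $X$ that is at once \emph{rational} --- which requires generic flatness (or étaleness) together with descent of regular functions along the resulting cover --- and \emph{continuous} for the Euclidean topology --- which requires that finite, hence proper, surjective continuous maps between the associated locally compact Hausdorff spaces are topological quotient maps. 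A further point to watch throughout is that $\pi$ is only dominant, not finite, and that $\C[X'_Y]$ need not embed in the absolute normalization $\C[X']$; consequently every argument must be routed through the relative normalization $X'_Y$ and the factorization $Y \to X'_Y \to X$ rather than through $X'$.
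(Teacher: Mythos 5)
The paper does not prove this statement at all: it is quoted verbatim from \cite{BFMQ} (Theorem 4.11 there), so there is no in-text argument to compare yours against. Taken on its own, your proof is sound and is essentially a self-contained reconstruction from the ingredients the paper does recall: you route everything through the factorization $Y\to X'_Y\to X$, use Proposition \ref{PropConstantFibres} to identify $\C[X^+_Y]$ with the functions on $X'_Y$ constant on the fibres of the finite map $\pi'$, descend such a function to a continuous function on $X(\C)$ via properness of $\pi'_{\C}$ in the strong topology (the same fact the paper uses implicitly in its Lemma \ref{LemLocBoundIfCircPIisLocBound} arguments), prove rationality of the descent by a normalized trace over a flat locus, and go back using local boundedness and Proposition \ref{PropLocalBoundIffIntegral} to land in $\C[X'_Y]$, then a density-and-continuity argument to get constancy on fibres. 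Two small points you should make explicit but which are not gaps: when $X$ is reducible the degree of the finite locally free map over the flat locus is only locally constant, so the normalized trace must be taken componentwise; and the map $\pi^{*}:\KO(X(\C))\to\KO(Y(\C))$ for a merely dominant (non-finite) $\pi$ is part of the setup inherited from \cite{BFMQ}, so you are right to use only the pointwise identity $f\circ\piC=p$ that it entails. You also correctly anticipate the main trap, namely that $\C[X'_Y]$ need not sit inside $\C[X']$, which is exactly why the argument must stay with the relative normalization.
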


By taking $Y=X'$, we get the main result of \cite{Bernard2021} which says that the seminormalization is obtained by replacing the structural sheaf of a variety by the sheaf of continuous rational functions defined on its closed points.

\section{Lipschitz saturation and locally Lipschitz rational functions}\label{SectionPrincipale}

As said in the preliminaries, rational functions that are locally bounded correspond to the regular functions on the normalization, and rational functions that are continuous for the Euclidean topology correspond to the regular functions on the seminormalization. In the same spirit, we look at rational functions that are locally Lipschitz and which correspond, as in the analytic case developed in \cite{PhamTeissier}, to the regular functions of a variety called the "Lipschitz saturation". 
We will start by studying, in subsection \ref{SubsecLipSatLipSemi}, the Lipschitz saturation of an algebraic variety in another with consideration for the finiteness of this construction. Then, in subsection \ref{SubsecLocLipRational}, we look at locally Lipschitz rational functions and their link to the relative Lipschitz saturation. Finally, in subsection \ref{SubsecLocLipAlgMorphisms}, we will state some algebraic criteria for two varieties to be linked by a locally biLipschitz algebraic morphism.

\subsection{Lipschitz saturation and Lipschitz seminormalization}\label{SubsecLipSatLipSemi}

We begin by recalling the definition and some classical properties of the Lipschitz saturation. Then we will investigate questions about the finiteness of this construction. This will lead us to define the \textit{Lipschitz seminormalization} of a variety, and, by proving that it can be different from the Lipschitz saturation, we will answer "Question 1" of \cite{PhamTeissier} in the case of algebraic varieties. Finally, we use the classical geometric interpretation of the integral closure of an ideal in order to give a description of the Lipschitz saturation of a variety in another. This is the Lipschitz analog of Proposition \ref{PropConstantFibres} about seminormalization.

Let us start by recalling the definition of the Lipschitz saturation and, thus, the definition of the integral closure of an ideal.

\begin{definition}
    Let $A$ be a ring and $I\subset A$ an ideal. The integral closure $\overline{I}$ of $I$ is 
    $$ \overline{I} := \left\{ b\in A \mid \exists a_i \in I^i\text{, } b^n+\sum_{i=1}^n a_ib^{n-i} = 0 \right\}.$$
\end{definition}

One can look at \cite{SwansonHuneke} for more information about integral closure of ideals. We now give the definition from \cite{PhamTeissier} of Lipschitz saturation.

\begin{definition}
    Let $A \inj B$ be an extension of $\C$-algebras. The \textit{Lipschitz saturation} $A^L_B$ of $A$ in $B$ is defined by
    $$ A^L_B := \left\{ b\in B \mid b\otimes 1 - 1\otimes b \in \overline{\ker\phi_B} \right\} $$
    where $\phi_B : B\otimes_{\C} B \to B\otimes_{A}B$ send $b_1\otimes_{\C}b_2$ to $b_1\otimes_A b_2$.
    
    We say that the extension $A\inj B$ is \textit{Lipschitz saturated} if $A^L_B = B$.
\end{definition}

An extensive study of the algebraic properties of the Lipschitz saturation is made by Lipman in \cite{Lipman1975}. When $B$ is not the normalization of $A$, we will sometimes call $A^{L}_B$ the \textit{relative} Lipschitz saturation. It is not really the same as in Lipman where the word "relative" highlights the fact that he considers the Lipschitz saturation of $R$-algebras, where $R$ is any ring.

\begin{rmq}
{\color{white} blank}
    \begin{itemize}
        \item[(1)] The ideal $\ker\phi_B$ of $B\otimes_{\C} B$ is generated by the elements of the form $a \otimes 1- 1\otimes a$ with $a\in A$.
        \item[(2)] For all $p,q\in B$, we have $$(p\otimes 1- 1\otimes p)(q\otimes 1)+(q\otimes 1- 1\otimes q)(1\otimes p) = pq\otimes 1 - 1\otimes pq $$ and $$ (p\otimes 1- 1\otimes p) + (q\otimes 1 - 1\otimes q) = (p+q)\otimes 1- 1\otimes (p+q) $$
    This shows that $A^L_B$ is a $\C$-algebra. Moreover, if $\pi : Y \to X$ is a dominant morphism between complex affine varieties, then $\ker\phi_{\C[Y]}$ is generated by the elements $x_i\circ \pi\otimes 1 - 1\otimes x_i\circ\pi$ where the $x_i$ are the coordinates of $X$. See \cite{DaSilva2023survey} Example 1.6 for more details.
        \item[(3)] For all ideal $I$, we have $I\subseteq \overline{I} \subseteq \sqrt{I}$. This implies that the Lipschitz saturation $A^L_B$ is a subalgebra of the saturation $\widehat{A_B}$. 
        \item[(4)] If $\pi : Y\to X$ is a finite morphism between complex affine varieties, i.e. $\C[Y]$ is a finite $\C[X]$-module, then $\C[X]^L_{\C[Y]}$ is also a finite $\C[X]$-module and so it is $\C$-algebra of finite type. This means that we can consider the variety $X^L_Y = \Spec(\C[X]^L_{\C[Y]})$. In the case where $Y$ is the normalization of $X$, this variety corresponds to the algebraic version of the classical Lipschitz saturation that appears, for example, in \cite{Adkins}.
    \end{itemize}

\begin{ex}
    Let $X = \Spec(\C[x,y]/\langle xy(y-x) \rangle)$ and $f=\frac{2xy}{x+y}$ be a rational function on $X$.
    Let us see that $f$ belongs to $\C[X]^L$. In order to do so, we have to see that $f\otimes 1- 1\otimes f \in \overline{I}$, where $I$ is the ideal of $\C[X]'\otimes_{\C}\C[X]'$ generated by the elements of the form $p\otimes 1- 1\otimes p$ with $p\in\C[X]$. Let
    $$g = \frac{2xy}{x^2+y^2}\otimes 1-1\otimes \frac{2xy}{x^2+y^2}.$$
    Since $\left( \frac{2xy}{x^2+y^2} \right)^2 = \frac{2xy}{x^2+y^2}$, we have $\frac{2xy}{x^2+y^2} \in \C[X]'$. So we get $g\in \C[X]'\otimes_{\C}\C[X]'$. Let $a = g(y\otimes 1 - 1\otimes y)+(x\otimes 1 - 1\otimes x)\in I$
    and $b = g(x\otimes 1 - 1\otimes x)(y\otimes 1 - 1\otimes y) \in I^2$. One can check, by looking at each irreducible component of $X\times X$, that 
    $$ (f\otimes 1 - 1\otimes f)^2 + a(f\otimes 1 - 1\otimes f) + b = 0.$$
    Hence $f \in \C[X]^L$. 
\end{ex}
    
    In the case where $\pi : Y\to X$ is not a finite morphism, we don't know, as for $\widehat{\C[X]_{\C[Y]}}$, if $\C[X]^L_{\C[Y]}$ corresponds to the coordinate ring of an affine variety.
    These considerations about the finiteness of morphisms were studied in \cite{BFMQ} about saturation and seminormalization. This leads us to define the notion of "Lipschitz seminormalization" of a variety.
\end{rmq}

\begin{definition}
    Let $A\inj B$ be an extension of $\C$-algebras. The \textit{Lipschitz seminormalization} $A^{L,+}_B$ of $A$ in $B$ is defined by
    $$ A^{L,+}_B := \left\{ b\in A'_B \mid b\otimes 1 - 1\otimes b \in \overline{\ker\phi_B} \right\} $$
    where $\phi_B : B\otimes_{\C} B \to B\otimes_{A}B$.
\end{definition}

\begin{rmq}
    The Lipschitz seminormalization $A^{L,+}_B$ is a subalgebra of the seminormalization $A^+_B$, see remark \ref{RmqSaturationEtSemiCorrespondentParfois}.
\end{rmq}

If $\pi : Y\to X$ is a dominant morphism of complex affine varieties, then by \cite{StackProj} Lemma 29.53.15, we can consider the relative normalization of $X$ in $Y$ which comes with a finite morphism $X'_Y\to X$. Then the inclusions
$$\C[X]\inj \C[X]^{L,+}_{\C[Y]} \inj \C[X'_Y]$$
imply that we can define the Lipschitz seminormalization $X^{L,+}_Y$ of $X$ in $Y$ and that it comes with a finite morphism $X^{L,+}_Y \to X$. If $\pi : Y\to X$ is a finite morphism, then $\C[X]\inj \C[Y]$ is an integral extension of rings, i.e. $\C[X'_Y] = \C[Y]$. Then $X^{L,+}_Y = X^L_Y$. In particular, this is true if $Y$ is the normalization of $X$. This means that, for any complex affine variety $X$, the Lipschitz seminormalization of $X$ is equal to the Lipschitz saturation of $X$ which we denote by $X^L$.\\

To summarize all the definitions, when $\pi : Y\to X$ is not finite, we have the following diagram

$$\xymatrix{
        & \C[X^L] \ar@{^{(}->}[r] & \C[X^+] \ar@{^{(}->}[r] & \C[X'] \ar@{^{(}->}[r] & \K(X) \ar@{^{(}->}[dr] & \\
        \C[X] \ar@{^{(}->}[r] \ar@{^{(}->}[dr] & \C[X^{L,+}_Y] \ar@{^{(}->}[r] \ar@{^{(}->}[u] \ar@{_{(}->}[d] & \C[X^+_Y] \ar@{^{(}->}[r] \ar@{^{(}->}[u] \ar@{_{(}->}[d] & \C[X'_Y] \ar@{^{(}->}[r] & \C[Y] \ar@{^{(}->}[r]& \K(Y)\\
        & \C[X]^L_Y \ar@{^{(}->}[r] & \widehat{\C[X]_{\C[Y]}} \ar@{^{(}->}[urr] &&
    }$$
    
\noindent The inclusion $\C[X^+_Y] \subset \C[X^+]$ can be seen with \cite{BFMQ} Theorem 4.11 since $\C[X^+] \simeq \KO(X(\C))$. This implies $\C[X^{L,+}_Y] \inj \C[X^+]\inj \C[X']$ and so $\C[X^{L,+}_Y]\inj \C[X^L]$.

\begin{ex}\label{ExempleQuestionTeissier}
    In the article \cite{PhamTeissier}, it is asked as "Question 1" if, for general algebras $A$ and $B$ such that $A\inj B\inj \K(A)$ where $\K(A)$ denotes the total fractions ring of $A$, the relative Lipschitz saturation $A^L_B$ is a subalgebra of $A'_{\K(A)}$. Considering our new definitions, it is the same as asking if the Lipschitz saturation and the Lipschitz seminormalization are always equal. We bring a negative answer to this question, in the case of complex algebraic varieties, by using example 3.15 from \cite{BFMQ}.

    Let $X = \Spec\left( \C[x,y]/\langle y^2-x^2(x+1)\rangle \right)$. Its normalization $X'$ is obtained by adding the element $y/x$ to its coordinate ring. So $\C[X'] = \C[x,y]/\langle y^2-(x+1) \rangle$ and $\pi' : X'\to X$ is given by $(x,y)\mapsto (x,xy)$. Let $Y$ be defined by removing the point $(0,1)$ of $X'$ lying over the singular point of $X$. Its coordinate ring is given by $\C[Y] = \C[X'][\frac{1}{y-1}] = \C[x,y,z]/\langle y^2-(x+1),z(y-1)-1\rangle $ and the induced morphism $\pi : Y\to X$ is given by $(x,y,z)\to (x,xy)$. An important point of this example is that \textbf{the morphism $\pi : Y\to X$ is not finite}.

    \begin{figure}[h]
        \centering
    \begin{tikzpicture}[line cap=round,line join=round,>=triangle 45,x=1.2cm,y=1.2cm]
    \clip(-7,-1.2) rectangle (5,1.2);
    
    \draw [line width=1pt] (-4.5,0) -- (-3,0);
    \draw [line width=1pt] (-4.5,0) -- (-4.4,0.1);
    \draw [line width=1pt] (-4.5,0) -- (-4.4,-0.1);
    \draw [line width=1pt] (0,0) -- (1.5,0);
    \draw [line width=1pt] (0,0) -- (0.1,0.1);
    \draw [line width=1pt] (0,0) -- (0.1,-0.1);
    
    \draw[line width=1pt,smooth,samples=100,domain=-1:1, xshift = -7cm] plot(\x,{sqrt((\x)^(2)*((\x)+1))});
    \draw[line width=1pt,smooth,samples=100,domain=-1:1, xshift = -7cm] plot(\x,{0-sqrt((\x)^(2)*((\x)+1))});
    
    \draw[line width=1pt,smooth,samples=100,domain=-1:1, xshift = -1.5cm] plot(\x,{sqrt((\x)+1)/1.5});
    \draw[line width=1pt,smooth,samples=100,domain=-1:1, xshift = -1.5cm] plot(\x,{(-sqrt((\x)+1))/1.5});
    
    \draw[line width=1pt,smooth,samples=100,domain=-1:1, xshift = 4cm] plot(\x,{sqrt((\x)+1)/1.5});
    \draw[line width=1pt,smooth,samples=100,domain=-1:1, xshift = 4cm] plot(\x,{(-sqrt((\x)+1))/1.5});
    \draw[fill=white] (3.15,0.6) circle (3.5pt);
    
    \draw (-5,-0.8) node[anchor=north west] {$X$};
    \draw (-0.1,-0.8) node[anchor=north west] {$X'$};
    \draw (4.5,-0.8) node[anchor=north west] {$Y$};
    
    \end{tikzpicture}
        \caption{Lipschitz saturation different from Lipschitz seminormalization}
        \label{fig:LipSatDiffLipSN}
\end{figure}
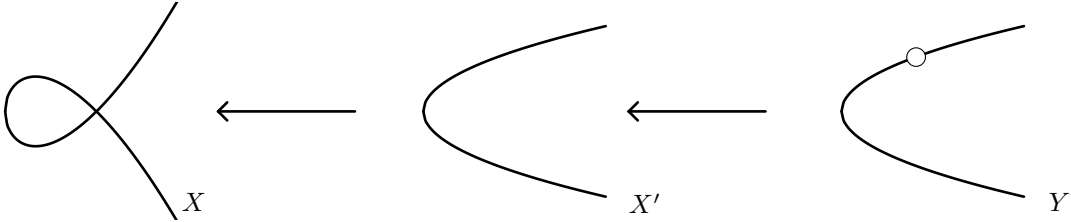

    The variety $X$ is seminormal because it has multicross singularities (see \cite{Multicross}). So $X=X^+$ and since $\C[X]\inj \C[X^{L,+}_Y]\inj \C[X^+]$, this implies that $\C[X] = \C[X^{L,+}_Y]$. To show that $\C[X^L_Y] = \C[Y]$, we do the same computations as in \cite{BFMQ} Example 3.15. For clarity of the exposition, we present these computations here. Let $\alpha = y\otimes 1-1\otimes y$ and $\beta = z\otimes 1-1\otimes z$. We want to show that $\alpha = \beta = 0$ in $\C[Y]\otimes_{\C[X]}\C[Y]$. First, see that $x = y^2-1$ implies $zx =y+1$ in $\C[Y]$. So $x\beta = (y+1)\otimes 1-1\otimes (y+1) = \alpha$. Moreover $x\alpha = xy\otimes 1-1\otimes xy = (y\circ\pi) \otimes 1 - 1\otimes (y\circ\pi) = 0$. Then $\alpha^2 = \alpha x\beta = 0$ and since we can show, by straightforward computation, that $\alpha = \frac{1}{4}\alpha^3$, we get $\alpha = 0$. Finally, using the equality $\alpha = (y-1)\otimes 1 - 1\otimes (y-1)$ and the relation $z(y-1)=1$, we observe that $0 = (z\otimes z)\alpha = -\beta$. We have shown that $\alpha$ and $\beta$ belong to the ideal $I = \langle p\otimes 1- 1\otimes p \mid p\in \C[X]\rangle$, hence $y,z\in \C[X]^L_{\C[Y]}$. This means that $\C[X^L_Y] = \C[Y]$.
    
    In conclusion, we have $X^{L,+}_Y = X \neq Y = X^L_Y$ even though the curves $X$ and $Y$ are irreducible, birational, and bijective. Again, the key point is that the morphism $Y\to X$ is not finite.
    
    For examples in any dimension, we consider Example 3.6 of \cite{BFMQ} and we show that, for $n\geqslant 1$, the variety $X\times \A^n$ is Lipschitz seminormal in $Y\times \A^n$ and the Lipschitz saturation of $X\times \A^n$ in $Y\times \A^n$ is $Y\times \A^n$. By \cite{GrecoTraverso80} Corollary 5.9, the variety $X\times \A^n$ is seminormal because $X$ and $\A^n$ are seminormal, so $X\times \A^n$ is, in particular, Lipschitz seminormal in $Y\times \A^n$. Moreover, since $\C[Y\times \A^n] \simeq \C[Y][x_1,\dots,x_n]$, the computations done previously still prove that $\C[X\times \A^n]^L_{\C[Y\times \A^n]}$ contains $y$ and $z$, so it is equal to $\C[Y\times \A^n]$.
\end{ex}

\begin{rmq}
Let $A\inj B$ be an extension of $\C$-algebras and let $J$ be an ideal of $B\otimes_{\C} B$ containing $\ker \phi_B$. Then one could define the following saturation:
$$\Big\{ b\in B \mid b\otimes 1- 1\otimes b\in J\Big\}$$
which will correspond to the Lipschitz saturation if $J = \overline{\ker \phi_B}$ and to the seminormalization if $J = \sqrt{\ker \phi_B}$. The point of this remark is that, since the previous calculations proved $\alpha$ and $\beta$ to be elements of $\ker \phi_{\C[Y]}$ itself, Example \ref{ExempleQuestionTeissier} will work for any such $J$.
\end{rmq}

We now want to give a more geometric meaning to the definition of the Lipschitz saturation of an algebraic variety in another one. More precisely, for a dominant morphism $\pi: Y\to X$, we identify the polynomial functions of $\C[Y]$ that come from an element of $\C[X]^L_{\C[Y]}$. The following proofs are classical, we include them here in order to verify that they also work for algebraic varieties equipped with the Euclidean topology on their closed points, and also to verify that Proposition \ref{PropLipSatCritereGeo} is true for non-finite morphisms. We start with the following lemma, whose proof can be found, for example, in \cite{Hochster}.

\begin{lemma}\label{LemHochster}
    Let $X$ be a complex affine variety and $I = \langle p_1,\dots,p_n\rangle$ be an ideal of $\C[X]$. Let 
    $$S_i := \C[X]\left [ \frac{p_1}{p_i},\dots,\frac{p_n}{p_i} \right ] \subseteq \C[X]_{p_i} $$
    and $T_i = S_i'$. Then $p\in \overline{I}$ if and only if $p\in I.T_i$ for all $i$.
\end{lemma}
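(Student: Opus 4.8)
The plan is to reduce the statement to the classical valuative criterion for the integral closure of an ideal, combined with the standard description of $\overline{I}$ via the normalized blow-up of $I$. Concretely, recall that for an ideal $I = \langle p_1,\dots,p_n\rangle$ of a ring $R = \C[X]$ (or more generally any reduced Noetherian ring), an element $p$ lies in $\overline{I}$ if and only if $p$ lies in $\overline{I R_v}$ for every discrete valuation ring $R_v$ between $R$ and its total ring of fractions (one does this component by component, so one may as well assume $X$ irreducible and $R$ a domain). The affine charts of the blow-up of $I$ are exactly the rings $S_i = R[p_1/p_i,\dots,p_n/p_i] \subseteq R_{p_i}$, and on each such chart the extended ideal $I S_i = p_i S_i$ is principal. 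Normalizing, i.e. passing to $T_i = S_i'$, every valuation ring dominating a point of the normalized blow-up arises from one of the $T_i$.

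The key steps, in order, are the following. First, I would observe that both sides of the claimed equivalence are local on $\Spec R$ in a suitable sense and reduce to the case $R$ a domain (using that $\overline{I}$ is compatible with the decomposition into irreducible components, and likewise $\overline{IS_i}$, $I.T_i$; this uses that $X$ is reduced). Second, I would recall the \emph{if} direction: if $p \in I.T_i$ for all $i$, then since each $T_i$ is integral over $S_i$, hence over $R$, and $I T_i = p_i T_i$ is principal, $p/p_i$ is integral over $R$ for each $i$; a standard argument (clearing denominators in the monic equation, or invoking the determinantal trick) then shows $p$ satisfies an equation of integral dependence on $I$ over $R$, i.e. $p \in \overline{I}$. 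Third, for the \emph{only if} direction, I would use the valuative criterion: take $p \in \overline{I}$ and a valuation ring $V$ with $R \subseteq V \subseteq \fracloc(R)$; then $I V$ is principal, generated by whichever $p_i$ has minimal value, say $p_j$, so $p_i/p_j \in V$ for all $i$, hence $S_j \subseteq V$, and since $V$ is integrally closed, $T_j = S_j' \subseteq V$; moreover $p \in \overline{I} \subseteq \overline{IV} = IV = p_j V$. Finally, I would assemble this: fixing $i$, I need $p \in I.T_i = p_i T_i$ inside $T_i$, and since $T_i$ is a normal Noetherian domain it is the intersection of the localizations at its height-one primes, each of which is a DVR $V$ dominating $T_i$; for such $V$ one has $S_i \subseteq V$ so the minimal-value generator of $IV$ must be $p_i$ (as $p_i/p_i = 1$ is a unit while all $p_k/p_i \in S_i \subseteq V$), whence $p \in p_i V$; intersecting over all such $V$ gives $p \in p_i T_i = I.T_i$.

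The main obstacle I anticipate is the bookkeeping in the last step — namely making precise that on the chart $T_i$ the ideal $IT_i$ is principal \emph{with generator $p_i$} at every height-one prime, so that the condition "$p\in IV$ for all DVRs $V$ dominating $T_i$" genuinely collapses to "$p\in p_iT_i$". This is where one must be careful that $p_i$ is not a zero-divisor on the relevant component (which is why the reduction to $R$ a domain, and the fact that $S_i\subseteq R_{p_i}$ forces $p_i$ to be a nonzerodivisor on $S_i$ and hence on $T_i$, is doing real work), and that the normalization $T_i$ is module-finite over $S_i$ so that the intersection-of-DVRs description is available. The reduction from general reduced $R$ to the domain case also requires a small argument that integral closure of ideals, formation of the $S_i$, and normalization all distribute over the minimal primes of $R$; this is routine but should be stated. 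Since the paper explicitly says this is classical and cites \cite{Hochster}, I would keep the write-up to a sketch of these points rather than a fully detailed proof.
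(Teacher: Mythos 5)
There is a genuine gap in your treatment of the implication ``$p\in I.T_i$ for all $i$ $\Rightarrow$ $p\in\overline{I}$''. Your sketch rests on the claim that $T_i$ is integral over $S_i$, \emph{hence over $R=\C[X]$}; but $S_i=R[p_1/p_i,\dots,p_n/p_i]$ is not integral over $R$ (take $R=\C[x,y]$, $I=(x,y)$, $p_1=x$: the element $y/x\in S_1$ satisfies no monic equation over $R$), so neither is $T_i$, and $p/p_i\in T_i$ does not give integrality of $p/p_i$ over $R$. The repair you suggest does not work either: if $(p/p_i)^d+s_1(p/p_i)^{d-1}+\dots+s_d=0$ with $s_k\in S_i$, multiplying by $p_i^d$ gives coefficients $s_kp_i^k$ which need not lie in $I^k$ (nor even in $R$), because $s_k$ may have arbitrarily large degree as a polynomial in the $p_j/p_i$; and the determinantal trick applied to the finite $S_i$-module $T_i$ with $pT_i\subseteq IT_i$ only yields $p\in\overline{IS_i}$, i.e.\ an equation with coefficients in $I^kS_i$, not in $I^k\subseteq R$. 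To descend from the charts back to $R$ one really does need the valuative criterion over $R$, which is how the paper argues: after reducing to $X$ irreducible, $\overline{I}=\bigcap_V IV\cap\C[X]$ over all valuation rings $\C[X]\subseteq V\subseteq\K(X)$ (Swanson--Huneke, Prop.\ 6.8.3); for each such $V$ the $p_j$ are totally ordered by divisibility, so some $S_i\subseteq V$, hence $T_i\subseteq V$ since $V$ is normal, and then $p\in I.T_i\subseteq IV$. Ironically, you have exactly this argument written down, but you deployed it on the other implication instead of here, where it is indispensable.

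For the converse implication ($p\in\overline{I}\Rightarrow p\in I.T_i$), your argument via height-one localizations of the Krull domain $T_i$ --- every such localization is a DVR $V\supseteq S_i$ in which $IV=p_iV$ is integrally closed, so $p/p_i\in V$ for all of them and hence $p/p_i\in T_i$ --- is correct, though heavier than needed: the paper simply divides the integral dependence relation $p^d+a_1p^{d-1}+\dots+a_d=0$, $a_k\in I^k$, by $p_i^d$ and observes that $a_k/p_i^k\in S_i$, so $p/p_i$ is integral over $S_i$ and lies in $T_i$ directly. So the overall verdict: one direction is proved correctly by a different (valuation-theoretic) route, but the other direction as proposed fails and must be replaced by the valuative-criterion argument over $\C[X]$.
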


\begin{proof}
    For all $i\in \{1,\dots,n\}$, the ideal $I.T_i$ is generated by $p_i$. So, if $p\in\overline{I}$, we divide its integral relation $p^d+a_1.p^{d-1}+\dots+a_d=0$ by $p_i^d$ and since, for all $k\in \{1,\dots,d\}$, we have $p_i^k\mid a_k$ in $S_i$, we get that $p/p_i$ is integral over $S_i$. Then $p/p_i\in T_i$ and $p=\frac{p}{p_i}p_i \in I.T_i$.
    For the reverse implication, we use \cite{SwansonHuneke} Proposition 6.8.3, which states that we can assume $X$ to be irreducible and we have
    $$\overline{I} = \bigcap_{V} I.V \cap \C[X]$$
    where $V$ varies over all valuation domains such that $\C[X]\subset V\subset \K(X)$. Let $p\in \C[X]$ be such that $p\in I.T_i$, for all $i\in \{1,\dots,n\}$ and let $\C[X] \subset V \subset \K(X)$ be a valuation domain. Then the elements of $V$ are totally ordered by divisibility, which implies that the image of one of the $p_i$ divides all the others. Then, for this specific $i$, we have $S_i\subseteq V$ and since $V$ is normal, we obtain $T_i\subseteq V$. Finally $p\in I.T_i$ implies $p\in I.V$.
\end{proof}

Thanks to Lemma \ref{LemHochster}, we get the next proposition, which gives a geometric meaning to the condition for a polynomial function on an affine variety to belong to the integral closure of an ideal. The proof follows the one of Pham and Teissier, and this result can also be found in \cite{LejeuneTeissierCloture} as Theorem 2.1.

\begin{prop}\label{PropEclatementNormalise}
    Let $X$ be a complex affine variety and $I$ be an ideal of $\C[X]$ with $I=\langle p_0,\dots,p_s \rangle$. Let $p\in \C[X]$. Then the following properties are equivalent:
    \begin{enumerate}
        \item The element $p$ belongs to $\overline{I}$.
        \item We have $|p(x)|<C\sup_i |p_i(x)|$ locally on $X(\C)$. 
    \end{enumerate}
\end{prop}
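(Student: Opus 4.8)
The plan is to deduce Proposition~\ref{PropEclatementNormalise} from Lemma~\ref{LemHochster} by translating the algebraic condition ``$p \in I.T_i$ for all $i$'' into the local boundedness statement via Proposition~\ref{PropLocalBoundIffIntegral}. Recall from the lemma that $S_i = \C[X][p_0/p_i,\dots,p_s/p_i] \subseteq \C[X]_{p_i}$ and $T_i = S_i'$ is its integral closure, and that $I.T_i$ is the principal ideal generated by $p_i$ in $T_i$. Thus $p \in \overline{I}$ if and only if, for each $i$, the rational function $p/p_i$ lies in $T_i$, i.e.\ $p/p_i$ is integral over $S_i$.

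First I would set up the geometry of the blow-up charts. Let $U_i := \Dcal(p_i) \subseteq X$ be the distinguished open set where $p_i \neq 0$; on $U_i$ the functions $p_j/p_i$ are regular, and $\Spec(S_i)$ is (essentially) the closure of the graph of the rational map $x \mapsto (p_0(x):\dots:p_s(x))$ restricted to this chart, with $\Spec(T_i) \to \Spec(S_i)$ its normalization. The key point is that a rational function on $\Spec(S_i)$ is integral over $S_i$ if and only if it is locally bounded on $\Spec(S_i)(\C)$, by Proposition~\ref{PropLocalBoundIffIntegral}. So the chain of equivalences I want is: $p \in \overline{I}$ $\iff$ for all $i$, $p/p_i$ is locally bounded on $\Spec(S_i)(\C)$ $\iff$ locally on $X(\C)$ one has $|p(x)| \leqslant C \sup_i |p_i(x)|$.

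For the implication (2)$\Rightarrow$(1): assume $|p| \leqslant C \sup_i |p_i|$ locally on $X(\C)$. Fix $i$ and a point $\xi$ in $\Spec(S_i)(\C)$ lying over some $x_0 \in X(\C)$. I need to bound $|p/p_i|$ near $\xi$. On the chart $\Spec(S_i)$ the functions $p_j/p_i$ are regular, hence bounded near $\xi$, so $\sup_j |p_j(x)| \leqslant C' |p_i(x)|$ in a neighborhood of $\xi$; combining with the hypothesis gives $|p(x)| \leqslant C C' |p_i(x)|$, i.e.\ $|p/p_i|$ is locally bounded near $\xi$. Since $\xi$ was arbitrary, $p/p_i$ is locally bounded on $\Spec(S_i)(\C)$, hence integral over $S_i$ by Proposition~\ref{PropLocalBoundIffIntegral}, so $p/p_i \in T_i$ and $p \in I.T_i$; as this holds for all $i$, Lemma~\ref{LemHochster} gives $p \in \overline{I}$. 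For the converse (1)$\Rightarrow$(2): if $p \in \overline{I}$, then $p/p_i \in T_i$ for each $i$, so $p/p_i$ is locally bounded on $\Spec(S_i)(\C)$. The morphism $\bigsqcup_i \Spec(T_i) \to X$ is proper and surjective onto $X$ — this is precisely the normalized blow-up of $I$ — so by Lemma~\ref{LemLocBoundIfCircPIisLocBound} applied suitably (covering $X(\C)$ by compact sets and pulling back), the estimate descends: for $x_0 \in X(\C)$, pick a chart $i$ with $x_0$ in the image of $\Spec(T_i)(\C)$, use local boundedness of $p/p_i$ together with $|p_i| \leqslant \sup_j |p_j|$ to get $|p(x)| = |(p/p_i)(x)|\,|p_i(x)| \leqslant C \sup_j |p_j(x)|$ near $x_0$.

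The main obstacle I anticipate is the bookkeeping around the covering: making sure that $X(\C) = \bigcup_i \pi_i(\Spec(T_i)(\C))$ (so that every point of $X(\C)$ is reached by some chart where the estimate is available) and that the gluing of the local Lipschitz-type estimates across charts, together with the properness needed to invoke Lemma~\ref{LemLocBoundIfCircPIisLocBound}, is carried out cleanly for the Euclidean topology rather than the Zariski one. The algebraic content is entirely in Lemma~\ref{LemHochster} and Proposition~\ref{PropLocalBoundIffIntegral}; the work here is checking that passing to $X(\C)$ with its Euclidean topology — including the non-irreducible case and, as emphasized, without a finiteness hypothesis on any ambient morphism — does not break the argument. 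In particular one should note that $\Spec(T_i) \to X$, while not finite, is proper (being the composite of a blow-up and a finite normalization map), which is exactly what Lemma~\ref{LemLocBoundIfCircPIisLocBound} requires.
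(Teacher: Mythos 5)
Your overall strategy (reduce to Lemma \ref{LemHochster} by interpreting ``$p/p_i\in T_i$'' through Proposition \ref{PropLocalBoundIffIntegral} on the blow-up charts) is the same circle of ideas as the paper, and your direction (2)$\Rightarrow$(1) is fine: pulling the hypothesis back to a chart and using that the $p_j/p_i$ are regular, hence bounded near any given point, does give local boundedness of $p/p_i$ on $\Spec(S_i)(\C)$. The gap is in (1)$\Rightarrow$(2), precisely at the point you flagged as ``bookkeeping''. The map $\bigsqcup_i\Spec(T_i)\to X$ is \emph{not} proper for the Euclidean topology: each $\Spec(S_i)$ (or $\Spec(T_i)$) is only an affine chart of the (normalized) blow-up, and an affine chart of a blow-up is not proper over $X$. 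Concretely, for the blow-up of $\A^2$ at the origin with $I=\langle x,y\rangle$, the chart $S_0=\C[x,y][y/x]\simeq\C[x,u]$ maps to $\A^2$ by $(x,u)\mapsto(x,xu)$, and the preimage of the compact set $\{|x|\le 1,|y|\le 1\}$ contains the whole line $\{x=0\}\simeq\C$, so it is not compact; the parenthetical ``composite of a blow-up and a finite normalization map'' conflates the chart with the blow-up itself. Hence Lemma \ref{LemLocBoundIfCircPIisLocBound} cannot be invoked for this disjoint union, and the fallback argument you give (``pick a chart $i$ with $x_0$ in the image'') does not close the gap either: local boundedness of $p/p_i$ near one preimage point $\xi$ of $x_0$ only bounds $|p|/|p_i|$ on the image of a neighborhood of $\xi$, which is a wedge-like set near $x_0$ and not a neighborhood of $x_0$ in $X(\C)$; points $x\to x_0$ from other directions are reached only through other points of the exceptional fiber, possibly in other charts, with a priori unrelated constants.

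The missing ingredient is compactness of the whole exceptional fiber, i.e.\ properness of the \emph{glued} blow-up. This is exactly how the paper proceeds: it forms $Z:=\overline{\Gamma_\psi}\subset X\times\Pbb^s$, uses that $\piC:Z(\C)\to X(\C)$ is proper, surjective and continuous to transfer, via Lemmas \ref{LemLocBoundCircPIisLocBound} and \ref{LemLocBoundIfCircPIisLocBound}, the local boundedness of $|p|/\sup_j|p_j|$ between $X(\C)$ and $Z(\C)$, and only afterwards localizes to the charts $U_i(\C)=\{u_i=1\}$ where the quotient is comparable to $|p\circ\pi|/|p_i\circ\pi|$ and Proposition \ref{PropLocalBoundIffIntegral} plus Lemma \ref{LemHochster} apply. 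Your argument can be repaired along the same lines without quoting properness abstractly: for $x$ in a compact neighborhood $K$ of $x_0$, choose $i(x)$ with $|p_{i(x)}(x)|=\sup_j|p_j(x)|$ and lift $x$ to the chart $i(x)$ with homogeneous coordinates $u_j=p_j(x)/p_{i(x)}(x)$ of modulus at most $1$; all these lifts lie in the compact set $Z(\C)\cap\bigl(K\times\{\sup_j|u_j|\le 1\}\bigr)$, and Lemma \ref{LemLocBoundOnCompact} applied there yields one uniform constant valid on all of $K$. Either way, the descent step must use the projective (glued) blow-up, not the disjoint union of its affine charts.
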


\begin{rmq}
    The word "locally" in the second property is a short way to say
    $$ \forall x_0\in X(\C)\text{\hspace{0.2cm} } \exists V\in\Vcal_{X(\C)}(x_0)\text{\hspace{0.2cm} } \exists C>0\text{\hspace{0.2cm} }\forall x\in V\text{\hspace{0.4cm} }|p(x)|<C\sup_i |p_i(x)|$$
\end{rmq}

\begin{proof}
    Let us consider $$ \begin{array}{cccc}
        \psi : & X\setminus \Zcal(I) & \to & \Pbb^r \\
         & x & \mapsto & [p_0(x):\dots:p_r(x)]
    \end{array} $$
    and $Z := \overline{\Gamma_{\psi}}^{\Zcal}\subset X\times \Pbb^r$. Then the morphism $\pi : Z\to X$ is the blow-up of $X$ along $\Zcal(I)$. The elements of $Z(\C)$ are of the form 
    $$ \Bigl( x, [u_1:\dots:u_r] \Bigr) \in X(\C)\times \Pbb^r(\C)\text{ with }p_i(x)u_j=p_j(x)u_i$$
    We can consider the charts $U_i(\C) := \{u_i=1\}$ such that $Z(\C) = \bigcup U_i(\C)$ and, for all $i\in \{1,\dots,r\}$, the ideal $I.\Ocal_Z(U_i)$ is generated by $p_i\circ\pi$. The property $|p(x)|<C\sup_i |p_i(x)|$ locally on $X(\C)$ is equivalent to $\frac{|p|}{\sup_j|p_j|}$ locally bounded on $X(\C)$. By Lemma \ref{LemLocBoundCircPIisLocBound} and \ref{LemLocBoundIfCircPIisLocBound}, this is equivalent to $\frac{|p\circ\pi|}{\sup_j|p_j\circ\pi|}$ locally bounded on $Z(\C)$ because $\piC$ is surjective, proper, and continuous for the Euclidean topology. Since, on every $U_i(\C)$, we have $p_j\circ\pi=u_j.(p_i\circ\pi)$ for all $j\in \{1,\dots,s\}$, then $|p_j\circ\pi|=|u_j|.|p_i\circ\pi|< \Tilde{C}.|p_i\circ\pi|$ locally on $U_i(\C)$. So, having $|p\circ\pi|<C\sup_j|p_j\circ\pi|$ locally on $Z(\C)$ is equivalent to having, for all $i\in \{1,\dots,s\}$, $|p\circ\pi|<C|p_i\circ\pi|$ locally on $U_i(\C)$. Then $\frac{|p\circ\pi|}{\sup_j|p_j\circ\pi(x)|}$ is locally bounded on $Z(\C)$ if and only if $\frac{p\circ\pi}{p_i\circ\pi}$ is locally bounded on $U_i(\C)$, for all $i\in\{1,\dots,s\}$. By Lemma \ref{PropLocalBoundIffIntegral}, this is equivalent to have $\frac{p\circ\pi}{p_i\circ\pi}\in \Ocal_{Z'}(U_i)$. Since $I.\Ocal_{Z}(U_i)$ is generated by $p_i\circ\pi$, this is equivalent to $p\circ\pi \in I.\Ocal_{Z'}(U_i)$, for all $i$. Finally, by Lemma \ref{LemHochster}, we get $$\text{for all $i$, \hspace{0.2cm}} p\circ\pi\in I.\Ocal_{Z'}(U_i) \iff p\in \overline{I}$$
\end{proof}

We can now apply this geometric interpretation of the integral closure of an ideal to the definition of the Lipschitz saturation. Note that the following proposition does not require the morphism to be finite.

\begin{prop}\label{PropLipSatCritereGeo}
    Let $\pi : Y\to X$ be a dominant morphism of complex affine varieties. Then
    $$ \C[X]^L_{\C[Y]} = \Bigl\{ p\in \C[Y] \mid \text{locally }|p(y_1)-p(y_2)| \leqslant C\|\piC(y_1)-\piC(y_2)\| \Bigr\} $$
\end{prop}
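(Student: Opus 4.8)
The plan is to reduce Proposition~\ref{PropLipSatCritereGeo} to Proposition~\ref{PropEclatementNormalise} applied to a well-chosen variety and ideal. By part (2) of the remark following the definition of the Lipschitz saturation, the ideal $\ker\phi_{\C[Y]} \subset \C[Y]\otimes_\C \C[Y]$ is generated by the elements $q_i := x_i\circ\pi\otimes 1 - 1\otimes x_i\circ\pi$, where $x_1,\dots,x_n$ are the coordinate functions of $X$ (viewed inside some $\A^n$). So if we set $W := \Spec(\C[Y]\otimes_\C\C[Y])$, which is the affine variety $Y\times Y$, and let $I := \langle q_1,\dots,q_n\rangle \subset \C[W]$, then by definition $p\in \C[X]^L_{\C[Y]}$ if and only if the element $p\otimes 1 - 1\otimes p \in \C[W]$ belongs to $\overline{I}$.

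First I would identify $W(\C)$ with $Y(\C)\times Y(\C)$, under which a point is a pair $(y_1,y_2)$, and observe that $q_i(y_1,y_2) = x_i(\pi(y_1)) - x_i(\pi(y_2))$ is the $i$-th coordinate of the vector $\piC(y_1) - \piC(y_2) \in \C^n$, while $(p\otimes 1 - 1\otimes p)(y_1,y_2) = p(y_1) - p(y_2)$. Then I would apply Proposition~\ref{PropEclatementNormalise} to the variety $W$, the ideal $I$, and the element $p\otimes 1 - 1\otimes p$: it gives that $p\otimes 1 - 1\otimes p \in \overline{I}$ if and only if, locally on $W(\C)$,
$$ |p(y_1) - p(y_2)| < C \sup_i |x_i(\pi(y_1)) - x_i(\pi(y_2))|. $$
The remaining step is purely elementary: the quantity $\sup_i |q_i(y_1,y_2)| = \|\piC(y_1) - \piC(y_2)\|_\infty$ and the Euclidean norm $\|\piC(y_1)-\piC(y_2)\|$ are equivalent up to the dimension-dependent constant $\sqrt n$, so the condition with the sup-norm and the condition with the norm $\|.\|$ induced by the embedding differ only by absorbing a constant into $C$; they define the same set of $p$. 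This yields the claimed equality, where a non-strict inequality $\leqslant$ in the statement is harmless since one may always enlarge $C$.

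The only point needing a little care — and the main (minor) obstacle — is the meaning of "locally" on $W(\C) = Y(\C)\times Y(\C)$ versus the formulation in the statement. A neighborhood of a point $(y_0,y_0)$ on the diagonal in the product topology is generated by products $V\times V$ of Euclidean neighborhoods of $y_0$ in $Y(\C)$, and a neighborhood of a general point $(y_1,y_0)$ with $y_1 \neq y_0$ by products $V_1\times V_0$; in all cases the local bound provided by Proposition~\ref{PropEclatementNormalise} is exactly the local biLipschitz-type estimate in the statement, quantified over $y_1,y_2$ ranging in a common neighborhood of each base point. Since the Euclidean topology on $Y(\C)\times Y(\C)$ coincides with the product topology, no subtlety arises, and one should simply note (as the paper's convention in Proposition~\ref{PropEclatementNormalise} already does) that "locally on $W(\C)$" unwinds precisely to the displayed $\forall x_0\,\exists V\,\exists C\,\forall x\in V$ formulation, here with $x_0 = (y_1^0, y_2^0)$ and $x = (y_1,y_2)$. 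I would state this identification explicitly and then conclude.
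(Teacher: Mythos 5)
Your proposal is correct and follows essentially the same route as the paper: apply the geometric characterization of the integral closure (Proposition \ref{PropEclatementNormalise}) to the variety $Y\times Y$, the ideal $\ker\phi_{\C[Y]}$ generated by the elements $x_i\circ\pi\otimes 1-1\otimes x_i\circ\pi$, and the element $p\otimes 1-1\otimes p$. Your extra remarks on the equivalence of the sup and Euclidean norms and on unwinding ``locally'' on $(Y\times Y)(\C)$ are just careful bookkeeping that the paper leaves implicit (it in fact uses $\|\cdot\|=\sup_i|\cdot|$ directly), not a different argument.
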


\begin{proof}
    Let $p\in \C[X]^L_{\C[Y]}$ and let $I$ be the ideal of $\C[Y]\otimes_{\C} \C[Y]$ generated by the elements $(x_i\circ\pi) \otimes 1 - 1\otimes (x_i\circ\pi)$ where the $x_i$ are the coordinates of $X$. By definition, we have $p\otimes 1 - 1\otimes p \in \overline{I}$. So, by Lemma \ref{PropEclatementNormalise}, this is equivalent to having, locally on all $(Y\times Y)(\C)$, the inequality $|p(y_1)-p(y_2)|<C\sup_i |x_i\circ\pi(y_1)- x_i\circ\pi(y_2)| = C\|\pi(y_1)-\pi(y_2)\|$.
\end{proof}

\begin{rmq}
    In particular, if $p\in\C[Y]$ verifies such inequality around $(y_1;y_2)\in Y(\C)\times Y(\C)$, then $\pi(y_1)=\pi(y_2)$ implies $p(y_1)=p(y_2)$. Since the elements of the saturation $\widehat{\C[X]_{\C[Y]}}$ correspond to the elements of $\C[Y]$ which are constant on the fibers of $\piC$, this proposition is coherent with the inclusion $\C[X]^L_{\C[Y]}\subset \widehat{\C[X]_{\C[Y]}}$.
\end{rmq}

\begin{cor}
    Let $\pi : Y\to X$ be a dominant morphism of complex affine varieties and $\pi' : X'_Y \to X$ be the relative normalization morphism. Then
    $$ \C[X]^{L,+}_{\C[Y]} = \Bigl\{ p\in \C[X'_Y] \mid \text{locally }|p(y_1)-p(y_2)| \leqslant C\|\piC'(y_1)-\piC'(y_2)\| \Bigr\} $$
\end{cor}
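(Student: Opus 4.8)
The corollary follows almost immediately from combining Proposition \ref{PropLipSatCritereGeo} with the definition of the Lipschitz seminormalization, once we have set up the right relative normalization. The plan is to apply the proposition to the morphism $\pi' : X'_Y \to X$ in place of $\pi : Y \to X$, and then to identify the resulting ring with $\C[X]^{L,+}_{\C[Y]}$.

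First I would recall that, since $\pi' : X'_Y \to X$ is a dominant morphism of complex affine varieties (it is finite and birational onto its image by \cite{StackProj} Lemma 29.53.15, hence dominant), Proposition \ref{PropLipSatCritereGeo} applies verbatim to $\pi'$ and yields
$$ \C[X]^L_{\C[X'_Y]} = \Bigl\{ p\in \C[X'_Y] \mid \text{locally }|p(y_1)-p(y_2)| \leqslant C\|\piC'(y_1)-\piC'(y_2)\| \Bigr\}. $$
So it remains to see that $\C[X]^L_{\C[X'_Y]} = \C[X]^{L,+}_{\C[Y]}$. For this, observe that $\C[X'_Y]$ is, by definition, the integral closure $\C[X]'_{\C[Y]}$ of $\C[X]$ in $\C[Y]$; in particular the extension $\C[X] \inj \C[X'_Y]$ is integral, so $\C[X'_Y] = \C[X]'_{\C[X'_Y]}$ and the normalization-bounded condition is automatic. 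Hence, for the finite (integral) morphism $\pi'$, the Lipschitz saturation and the Lipschitz seminormalization coincide: $\C[X]^L_{\C[X'_Y]} = \C[X]^{L,+}_{\C[X'_Y]}$, exactly as noted in the text following the definition of the Lipschitz seminormalization (the case where the morphism is finite).

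Finally I would match $\C[X]^{L,+}_{\C[X'_Y]}$ with $\C[X]^{L,+}_{\C[Y]}$. By definition $\C[X]^{L,+}_{\C[Y]} = \{ b\in \C[X]'_{\C[Y]} \mid b\otimes 1 - 1\otimes b \in \overline{\ker\phi_{\C[Y]}}\}$, and $\C[X]'_{\C[Y]} = \C[X'_Y]$. The only subtlety is that the ambient tensor square changes: in $\C[X]^{L,+}_{\C[Y]}$ one tests membership in $\overline{\ker\phi_{\C[Y]}} \subset \C[Y]\otimes_\C\C[Y]$, while in $\C[X]^L_{\C[X'_Y]}$ one tests membership in $\overline{\ker\phi_{\C[X'_Y]}} \subset \C[X'_Y]\otimes_\C\C[X'_Y]$. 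But both $\ker\phi_{\C[Y]}$ and $\ker\phi_{\C[X'_Y]}$ are generated by the same elements $(x_i\circ\pi)\otimes 1 - 1\otimes(x_i\circ\pi)$ (remark after the definition of the Lipschitz saturation), and for an element $b \in \C[X'_Y]$, whether $b\otimes 1 - 1\otimes b$ lies in the integral closure of this ideal can be checked inside the smaller subring $\C[X'_Y]\otimes_\C\C[X'_Y]$: this is precisely the content of Proposition \ref{PropEclatementNormalise}, whose geometric criterion $|p(y_1)-p(y_2)| \leqslant C\|\piC'(y_1)-\piC'(y_2)\|$ on $(X'_Y\times X'_Y)(\C)$ only involves $\C[X'_Y]$ and is insensitive to the ambient variety.

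The main (and only real) obstacle is this last compatibility between the two tensor squares, i.e. checking that the geometric inequality characterizing membership is the same whether one works over $X'_Y\times X'_Y$ or over $Y\times Y$. This is handled by the geometric characterization: by Proposition \ref{PropLipSatCritereGeo} applied to $\pi$, membership of $b$ in $\C[X]^{L,+}_{\C[Y]}$ says $b \in \C[X'_Y]$ and $|b(y_1)-b(y_2)| \leqslant C\|\piC(y_1)-\piC(y_2)\|$ locally on $Y(\C)\times Y(\C)$; since $b$ and the $x_i\circ\pi$ all factor through $X'_Y$ (the $x_i$ are functions on $X$, hence on $X'_Y$), and since $Y(\C) \to X'_Y(\C)$ is surjective, this local inequality on $Y(\C)\times Y(\C)$ holds if and only if the corresponding local inequality $|b(y_1)-b(y_2)| \leqslant C\|\piC'(y_1)-\piC'(y_2)\|$ holds on $X'_Y(\C)\times X'_Y(\C)$, using Lemmas \ref{LemLocBoundCircPIisLocBound} and \ref{LemLocBoundIfCircPIisLocBound} with the proper surjective continuous map $(Y\times Y)(\C) \to (X'_Y\times X'_Y)(\C)$. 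This gives the claimed equality.
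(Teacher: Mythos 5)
Your first two steps are precisely the intended argument, and they already complete the proof: Proposition \ref{PropLipSatCritereGeo} applied to the finite dominant morphism $\pi' : X'_Y \to X$ gives the displayed description of $\C[X]^L_{\C[X'_Y]}$, and this ring is what the paper means by the Lipschitz seminormalization of $X$ in $Y$ — the identification $X^{L,+}_Y = X^L_{X'_Y}$ is how the notion is used throughout (it is stated as being ``by definition'' in the proof of Theorem \ref{TheoPULipSemi}, and in the proof of Theorem \ref{TheoLipSatEstLipRatio} the defining local boundedness is taken on $X'_Y(\C)\times X'_Y(\C)$). No comparison of the two tensor squares is needed after that point.

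Your third step, however, contains a genuine error. You invoke Lemma \ref{LemLocBoundIfCircPIisLocBound} for ``the proper surjective continuous map $(Y\times Y)(\C)\to (X'_Y\times X'_Y)(\C)$'', but when $\pi$ is not finite the map $Y(\C)\to X'_Y(\C)$ is in general neither proper nor surjective, so that lemma does not apply; only the easy direction (pulling the inequality back along the continuous map, Lemma \ref{LemLocBoundCircPIisLocBound}) survives. In fact the equivalence you assert is false, and Proposition \ref{PropEclatementNormalise} does not give any invariance of integral closure under change of the ambient ring: it characterizes membership by inequalities on the closed points of the \emph{given} variety, and these live on different spaces here. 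The paper's own Example \ref{ExempleQuestionTeissier} is a counterexample to your compatibility claim: there $X'_Y = X'$, the element $p = y \in \C[X']$ satisfies $y\otimes 1 - 1\otimes y \in \ker\phi_{\C[Y]}$, so the local inequality with respect to $\piC$ holds on $Y(\C)\times Y(\C)$; yet on $X'(\C)\times X'(\C)$ the inequality with respect to $\piC'$ fails at the pair $(0,1),(0,-1)$ of points over the node, since $|y(0,1)-y(0,-1)| = 2$ while $\piC'(0,1)=\piC'(0,-1)$. This is exactly the phenomenon that makes $X^{L,+}_Y \neq X^L_Y$ in that example: membership of $b\otimes 1 - 1\otimes b$ in $\overline{\ker\phi_{\C[Y]}}$ is strictly weaker than membership in $\overline{\ker\phi_{\C[X'_Y]}}$, so the bridge you try to build cannot exist. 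The correct reading is to take the integral closure inside $\C[X'_Y]\otimes_\C\C[X'_Y]$, i.e.\ $\C[X]^{L,+}_{\C[Y]} = \C[X]^L_{\C[X'_Y]}$, after which your first display is the whole proof.
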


\begin{ex}
    Let $X := \Spec(\C[x,y]/\langle y^4-x^5\rangle)$. Its normalization, which is the same as its seminormalization, is given by $\pi' : X' \to X$ with $X' := \Spec(\C[x,y]/\langle y^4-x\rangle)$ and $\piC' : (x,y) \mapsto (x,xy)$. It is obtained by adding the fraction $y/x$ in the coordinate ring of $X$. This fraction becomes $y$ in $\C[X']$, let us see that it is not an element of $\C[X^L]$. According to Proposition \ref{PropLipSatCritereGeo}, if $y$ corresponds to an element of $\C[X^L]$, we should have an inequality of the following form in a small neighborhood of $(0,0)$
    $$ |0-y| \leqslant C\|\piC'(0,0)-\piC'(x,y)\| $$
    i.e.
    $$ |y| \leqslant C\sup(|x|,|xy|) = C\sup(|y|^4,|y|^5) $$
    But there is no such inequality when $(x,y)$ goes to zero. This means that $y/x \in \C[X^+]\setminus \C[X^L]$. If we look now at the fraction $f = y^2/x \in \K(X)$, it becomes the element $xy^2 = y^6$ in $\C[X']$ and so it verifies the above inequality. In fact, for $(x_1,y_1),(x_2,y_2)\in X'(\C)$, we have 
    $$y_1^6- y_2^6 = (y_1+y_2)(y_1^5-y_2^5)-y_1y_2(y_1^4-y_2^4).$$
    So, for any point of $X'(\C)$, we can consider a compact neighborhood where we will have
    $$|y_1^6- y_2^6| \leqslant C\sup(|y_1^5-y_2^5|,|y_1^4-y_2^4|)$$
    We get that $y^2/x \in \C[X]^L_{\C[X']}$. In fact, by adding this fraction and $y^3/x^2$ to the coordinate ring of $\C[X]$, we obtain the Lipschitz saturation.
\end{ex}

\subsection{Locally Lipschitz rational functions}\label{SubsecLocLipRational}

We are now interested in rational functions of an algebraic variety that can be extended to the closed points of the variety as a locally Lipschitz function. The main goal of this section is to obtain a version of Pham-Teissier's \cite{PhamTeissier} Theorem 1.2 for the relative Lipschitz seminormalization of an algebraic variety in another. This will be given by Theorem \ref{TheoLipSatEstLipRatio}, which can also be seen as the Lipschitz version of \cite{BFMQ} Theorem 4.11. about regulous functions and seminormalization.

\begin{definition}
    Let $X$ be an affine variety over $\C$. We say that $f\in \K(X)$ is a locally Lipschitz rational function if $f$ extends to $X(\C)$ such that, for all $x_0\in X(\C)$, there exists $V \in \Vcal_{X(\C)}(x_0)$ and
    $$ \exists C>0, \forall x_1,x_2\in V, |f(x_1)-f(x_2)|\leqslant C\lVert x_1-x_2 \rVert_{\infty} $$
    We denote by $\KL(X(\C))$ the set of locally Lipschitz rational functions on $X$.
\end{definition}

\begin{rmq}
    A rational function $f\in\K(X)$ belongs to $\KL(X(\C))$ if and only if the function $|f\otimes1-1\otimes f|/\lVert x\otimes 1- 1\otimes x \rVert$ is locally bounded on $X(\C)\times X(\C)$ in the sense of definition \ref{DefLocBounded}.
\end{rmq}

\begin{rmq}
    We obviously have $\KL(X(\C))\subset \KO(X(\C))$ and so
    $$ \KL(X(\C)) \subset \C[X^+] \subset \C[X'] $$
    Therefore $\KL(X(\C))$ is a finite $\C$-algebra, so it corresponds to the coordinate ring of an affine variety. We will show that this variety is the Lipschitz saturation $X^L$.
\end{rmq}

\begin{ex}
    Let $X = \Spec(\C[x,y]/\langle xy(y-x)\rangle)$ and $f:X(\C) \to \C$ be the function defined by 
    $$ f(x,y) = \left\{ \begin{array}{cl}
     \frac{2xy}{x+y} & \text{ if } (x,y)\neq (0,0)  \\
     0 & \text{ else}
    \end{array} \right. .$$
    We have $f(x,y)=0$ if $xy=0$ and $f(x,y)=x$ if $y=x$. So, for $(x_1,y_1)$ and $(x_2,y_2) \in X(\C)$, we have:
    $$\left\{\begin{array}{ccl}
        x_1.y_1 = 0\text{ and }x_2.y_2 = 0&\text{ implies }&|f(x_1,y_1)-f(x_2,y_2)| = 0.\\
        x_1 = y_1\text{ and }x_2 = 0&\text{ implies }&|f(x_1,y_1)-f(x_2,y_2)| = |x_1|\text{ and }|x_1-x_2| = |x_1|.\\
        x_1 = y_1\text{ and }y_2 = 0&\text{ implies }&|f(x_1,y_1)-f(x_2,y_2)| = |y_1|\text{ and }|y_1-y_2| = |y_1|.\\
        x_1 = y_1\text{ and }x_2 = y_2&\text{ implies }&|f(x_1,y_1)-f(x_2,y_2)| = |x_1-x_2|.
    \end{array}\right.$$
    In every case, we get $|f(x_1,y_1)-f(x_2,y_2)| \leqslant \lVert (x_1,y_1)-(x_2,y_2) \rVert_{\infty}$ so $f\in \KL(X(\C))$.\\
\end{ex}

The inclusion $\KL(X(\C))\subset \KO(X(\C))$ allows us to get the following characterizations of locally Lipschitz rational functions, which are direct consequences of results obtained in \cite{Bernard2021}.

\begin{prop}
    Let $X$ be a complex affine variety. Then
    $$\KL(X(\C)) = \C[X]'_{\mathcal{L}(X(\C))}$$
    where $\mathcal{L}(X(\C))$ denotes the set of locally Lipschitz functions defined on $X(\C)$.
\end{prop}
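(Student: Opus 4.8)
The plan is to prove the two inclusions $\KL(X(\C)) \subseteq \C[X]'_{\mathcal{L}(X(\C))}$ and $\C[X]'_{\mathcal{L}(X(\C))} \subseteq \KL(X(\C))$ separately, leaning on the already-established fact that $\KL(X(\C)) \subset \KO(X(\C)) \subset \C[X']$, so that every rational function in sight is in particular locally bounded and extends continuously to $X(\C)$. For the inclusion $\KL(X(\C)) \subseteq \C[X]'_{\mathcal{L}(X(\C))}$, I would start from $f \in \KL(X(\C))$. Since $f$ is a locally Lipschitz rational function it is in particular locally bounded, hence by Proposition \ref{PropLocalBoundIffIntegral} it is integral over $\C[X]$; and by definition it extends to a locally Lipschitz function on $X(\C)$, i.e. an element of $\mathcal{L}(X(\C))$. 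Therefore $f$ is an element of $\mathcal{L}(X(\C))$ which is integral over $\C[X]$, which is exactly the statement $f \in \C[X]'_{\mathcal{L}(X(\C))}$. (One should check that the ambient ring extension $\C[X] \inj \mathcal{L}(X(\C))$ makes sense, i.e. that $\C[X]$ injects into $\mathcal{L}(X(\C))$ via restriction of polynomial functions to $X(\C)$; this is immediate since polynomials are Lipschitz on compacts and $X$ is reduced.)

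For the reverse inclusion, take $f \in \C[X]'_{\mathcal{L}(X(\C))}$: by definition $f$ is a locally Lipschitz function on $X(\C)$ that satisfies a monic polynomial relation $f^n + a_1 f^{n-1} + \cdots + a_n = 0$ with $a_i \in \C[X]$, this identity holding pointwise on $X(\C)$. The first task is to show $f$ is (the restriction to $X(\C)$ of) a rational function, i.e. $f \in \K(X)$, not merely a continuous function satisfying an integral equation. This is where I expect the results of \cite{Bernard2021} to enter: a locally Lipschitz function is in particular continuous, so $f$ is a continuous function integral over $\C[X]$; on a complex affine variety such a function is a continuous rational function (this is essentially the characterization $\KO(X(\C)) = \{$continuous functions on $X(\C)$ that are integral over $\C[X]\} = \C[X^+]$ coming from the work on seminormalization, e.g. combining Proposition \ref{PropLocalBoundIffIntegral} with the identification $\C[X^+]\simeq \KO(X(\C))$). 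Hence $f \in \KO(X(\C)) \subseteq \K(X)$, so $f$ is a rational function. Being additionally locally Lipschitz on $X(\C)$ by hypothesis, $f$ satisfies exactly the defining condition of $\KL(X(\C))$, giving $f \in \KL(X(\C))$.

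The main obstacle is the step in the last paragraph: showing that a continuous (a fortiori locally Lipschitz) function on $X(\C)$ which is integral over $\C[X]$ is automatically a \emph{rational} function. Over $\R$ this can fail for merely continuous functions, but over $\C$, where analytic normalization techniques apply, an integral continuous function is forced to be rational — this is precisely the content invoked when identifying $\KO(X(\C))$ with $\C[X^+]$ in \cite{Bernard2021}, and here I would simply cite that identification rather than reprove it. Once rationality is granted, everything else is bookkeeping: the equivalence ``locally bounded $\iff$ integral'' (Proposition \ref{PropLocalBoundIffIntegral}) handles one direction, and the definitions of $\KL$ and of the integral closure $\C[X]'_{\mathcal{L}(X(\C))}$ match up tautologically. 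A minor point worth stating explicitly is that $\mathcal{L}(X(\C))$ is genuinely a ring (closed under sums and products locally, since a product of two functions each bounded and Lipschitz on a common compact neighborhood is again Lipschitz there), so that ``integral closure of $\C[X]$ in $\mathcal{L}(X(\C))$'' is meaningful; this is the analogue of the remark in the earlier subsection that $\K(X)$, $\KO(X(\C))$, etc. are rings.
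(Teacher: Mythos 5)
Your proposal is correct and follows essentially the same route as the paper: the paper simply cites \cite{Bernard2021} Corollary 4.18 (the analogous identification of $\KO(X(\C))$ as the integral closure of $\C[X]$ in continuous functions), and your argument is exactly the reduction to that result — locally Lipschitz rational functions are locally bounded, hence integral, while a locally Lipschitz function integral over $\C[X]$ is a continuous function integral over $\C[X]$, hence rational by the cited identification, and so lies in $\KL(X(\C))$. The only detail worth making explicit is that the integral relation from Proposition \ref{PropLocalBoundIffIntegral} holds in $\K(X)$, and one passes to a pointwise identity on $X(\C)$ by continuity and Euclidean density of the domain of definition; this is routine.
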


\begin{proof}
    This is a corollary of \cite{Bernard2021} Corollary 4.18.
\end{proof}

The following proposition says that we are considering "algebraic" locally Lipschitz functions, according to the terminology of \cite{Jelonek2021AlgebraicBilipHomeo}.

\begin{prop}
    Let $X$ be a complex affine variety. Then, the ring $\KL(X(\C))$ corresponds to the set of locally Lipschitz functions with a Zariski-closed graph.
\end{prop}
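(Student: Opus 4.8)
The plan is to prove the two inclusions separately, viewing each element of $\KL(X(\C))$ as its (unique, continuous) extension to $X(\C)$, and showing that these extensions are exactly the locally Lipschitz functions $f\colon X(\C)\to\C$ whose graph $\Gamma_f=\{(x,f(x))\mid x\in X(\C)\}$ is Zariski-closed in $X\times\A^1$.

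For the inclusion ``$\subseteq$'', I would start from $\KL(X(\C))\subseteq\C[X^+]$ recalled above, so that $f\in\KL(X(\C))$ is a regular function on the seminormalization $X^+$ and defines a morphism $X^+\to\A^1$ whose graph $V(T-f)\subset\Spec(\C[X^+][T])$ is Zariski-closed. Let $\nu\colon X^+\to X$ be the seminormalization morphism, which is finite; the induced morphism $\nu\times\mathrm{id}_{\A^1}\colon X^+\times\A^1\to X\times\A^1$ is then finite, hence closed, so the image of $V(T-f)$ is Zariski-closed in $X\times\A^1$. It remains to identify this image, on $\C$-points, with $\Gamma_f$: since $\C[X]\inj\C[X^+]$ is subintegral, Proposition~\ref{PropSubEquiMemeFctRatioCont} gives that $\nu_\C\colon X^+(\C)\to X(\C)$ is a Euclidean homeomorphism, and $f\circ\nu_\C^{-1}$ — being continuous and agreeing with the rational function $f$ on a Zariski-dense open subset — is precisely the continuous extension of $f$ to $X(\C)$; hence the image equals $\{(x,f(\nu_\C^{-1}(x)))\mid x\in X(\C)\}=\Gamma_f$. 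As $f$ is locally Lipschitz by the very definition of $\KL(X(\C))$, this settles the first inclusion.

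For the converse, suppose $f\colon X(\C)\to\C$ is locally Lipschitz with $\Gamma_f$ Zariski-closed; it then suffices to show that $f$ is rational, since a locally Lipschitz rational function lies in $\KL(X(\C))$ by definition. First I would reduce to $X$ irreducible, using that $\Gamma_f\cap(X_j\times\A^1)$ is the Zariski-closed graph of $f|_{X_j(\C)}$ for each irreducible component $X_j$ of $X$, and that a function rational on each component is rational on $X$. Then I would give $Z:=\Gamma_f$ its reduced structure and look at the first projection $p\colon Z\to X$. Because $\Gamma_f$ is a graph over all of $X(\C)$, the morphism $p$ is bijective on $\C$-points and dominant, and (replacing $Z$ by the unique component dominating $X$, uniqueness coming again from injectivity on $\C$-points) $p$ is generically finite of some degree $d=[\C(Z):\C(X)]$. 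Here the characteristic-zero hypothesis enters: the extension $\C(X)\subset\C(Z)$ is separable, so over a dense open $U\subset X$ the morphism $p$ is finite with reduced fibres of cardinality $d$; bijectivity on $\C$-points forces $d=1$, so $p$ is birational, and transporting $\mathrm{pr}_{\A^1}$ through $p^{-1}$ exhibits $f$ as regular on a dense open, i.e.\ $f\in\K(X)$.

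The one genuinely non-formal point is this last step — that a dominant, generically finite morphism of complex varieties which is bijective on closed points is birational in characteristic zero — everything else being bookkeeping with the finiteness of the seminormalization, the closedness of finite morphisms, and the comparison of $f\circ\nu_\C^{-1}$ with the continuous extension of $f$. (Alternatively, once $f$ is known to be rational one may finish by Proposition~\ref{PropLocalBoundIffIntegral}: $f$ is locally bounded, being locally Lipschitz, hence integral over $\C[X]$, which reidentifies it inside $\C[X']$ and completes the argument.)
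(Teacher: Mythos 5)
Your argument is correct, but it takes a genuinely different route from the paper. The paper disposes of this proposition in one line, as a corollary of Corollary~4.19 of \cite{Bernard2021}, which already characterizes $\KO(X(\C))$ as the continuous functions on $X(\C)$ with Zariski-closed graph: since locally Lipschitz functions are in particular continuous, one only has to observe that the Lipschitz condition is carried along unchanged on both sides of that bijection. You instead reprove the closed-graph characterization from scratch: for one inclusion you push the graph of $f\in\C[X^+]$ forward along the finite morphism $X^+\times\A^1\to X\times\A^1$ and identify its $\C$-points via the homeomorphism $\nu_\C$ (Proposition~\ref{PropSubEquiMemeFctRatioCont}); for the converse you take the reduced graph $Z\subset X\times\A^1$, project to $X$, and use that a dominant morphism of irreducible complex varieties which is bijective on closed points is birational in characteristic zero. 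That key step is itself Lemma~\ref{LemPIbijDoncPIbiration} (i.e.\ \cite{BFMQ} Proposition~5.3) and could simply be cited rather than rederived via separability; likewise the uniqueness of the dominating component and the identification of $f$ with $\mathrm{pr}_{\A^1}\circ p^{-1}$ on $\Dom$ use Euclidean density of Zariski-dense opens, which you only gesture at. The trade-off is clear: your version is self-contained and makes visible exactly which geometric facts (finiteness of $X^+\to X$, bijective-implies-birational) drive the statement, while the paper's proof is shorter because the hard content is delegated to the continuous-rational case already established in \cite{Bernard2021}, with the Lipschitz hypothesis entering only as a condition stable under the identification $f\mapsto$ (its continuous extension).
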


\begin{proof}
    This is a corollary of \cite{Bernard2021} Corollary 4.19.
\end{proof}

In the two following propositions, we study the composition of locally Lipschitz functions by a finite morphism of algebraic varieties.

\begin{prop}\label{PropVarphiInjEtIm}
    Let $\pi : Y\to X$ be a finite morphism of affine complex varieties. Then the map 
    $$\begin{array}{cccc}
        \varphi : & \KL(X(\C)) & \to & \KL(Y(\C)) \\
         & f & \mapsto & f\circ\piC
    \end{array}$$ is injective and 
    $$ \im(\varphi) = \Bigl\{ g\in\KL(Y(\C)) \mid \text{locally } |g(x)-g(y)| \leqslant C\|\piC(x)-\piC(y)\| \Bigr\}.$$
\end{prop}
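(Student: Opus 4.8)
The plan is to dispatch injectivity first, then prove the characterization of the image by a double inclusion. For injectivity: a rational function $f\in\KL(X(\C))$ is determined by its restriction to the Zariski-dense open set $\Dom(f)$, and since $\pi$ is dominant, $\piC$ has Zariski-dense image (indeed, for a finite dominant morphism $\piC$ is surjective), so $\piC^{-1}(\Dom(f))$ is Zariski-dense in $Y(\C)$; thus $f\circ\piC = 0$ forces $f$ to vanish on a dense set, hence $f=0$ in $\K(X)$. (Alternatively, injectivity of $\varphi$ follows from injectivity of $\pi^*:\K(X)\inj\K(Y)$ together with the fact that $\KL(X(\C))\subset\K(X)$.)

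For the image, I would first check the inclusion $\im(\varphi)\subseteq\{g\in\KL(Y(\C))\mid \text{locally } |g(x)-g(y)|\leqslant C\|\piC(x)-\piC(y)\|\}$. Take $g=f\circ\piC$ with $f\in\KL(X(\C))$. Fix $y_0\in Y(\C)$ and put $x_0=\piC(y_0)$. Choose a neighborhood $U$ of $x_0$ on which $f$ is $C_1$-Lipschitz for $\|\cdot\|_\infty$ on $X(\C)\subset\C^n$; by continuity of $\piC$ the set $V:=\piC^{-1}(U)$ is a neighborhood of $y_0$, and for $x,y\in V$ we get $|g(x)-g(y)| = |f(\piC(x))-f(\piC(y))|\leqslant C_1\|\piC(x)-\piC(y)\|_\infty$. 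This is exactly the stated inequality (with the understanding, as everywhere in this paper, that all norms come from fixed closed embeddings and are comparable up to constants on compact sets, so the choice of norm is immaterial locally). Note this inclusion uses only continuity of $\piC$, not finiteness.

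For the reverse inclusion — which is where finiteness enters and which I expect to be the main obstacle — take $g\in\KL(Y(\C))$ satisfying the local inequality $|g(x)-g(y)|\leqslant C\|\piC(x)-\piC(y)\|$. The first point is that $g$ is then constant on the fibers of $\piC$: if $\piC(x)=\piC(y)$, the inequality (valid near the diagonal point, but the fiber condition is a pointwise statement one gets by a limiting/neighborhood argument as in the remark following Proposition \ref{PropLipSatCritereGeo}) forces $g(x)=g(y)$, so there is a well-defined set-theoretic function $f$ on $X(\C)$ with $f\circ\piC=g$. I would then argue that $f\in\K(X)$: since $g\in\KL(Y(\C))\subset\C[Y']=\C[Y]$ (as $\pi$ finite gives $\C[Y']=\C[Y]$... more precisely $g$ is a regular function on $Y$ that is constant on the fibers of the finite morphism $\piC$), $f$ is a regular function on $X'_Y$-constant-on-fibers, i.e. $f\in\widehat{\C[X]_{\C[Y]}}=\C[X^+_Y]\subset\K(X)$ by Proposition \ref{PropConstantFibres} and Remark \ref{RmqSaturationEtSemiCorrespondentParfois}; in particular $f\in\K(X)$ and extends to all of $X(\C)$. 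It remains to show $f$ is locally Lipschitz on $X(\C)$. Here is where properness/finiteness of $\piC$ is used: fix $x_0\in X(\C)$, let $\{y_1,\dots,y_m\}=\piC^{-1}(x_0)$ (a finite set, $\piC$ being finite hence quasi-finite and surjective), choose disjoint compact neighborhoods $W_j\ni y_j$ on which the inequality $|g(x)-g(y)|\leqslant C\|\piC(x)-\piC(y)\|$ holds, and use properness of $\piC$ to find a neighborhood $U$ of $x_0$ with $\piC^{-1}(U)\subseteq\bigcup_j W_j$. Then for $x',x''\in U$ pick $y'\in\piC^{-1}(x')$, $y''\in\piC^{-1}(x'')$; if both lie in the same $W_j$ we immediately get $|f(x')-f(x'')|=|g(y')-g(y'')|\leqslant C\|x'-x''\|$. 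The genuine difficulty is the case where $y'\in W_j$ and $y''\in W_k$ with $j\neq k$: then $y'$ and $y''$ are not close, so the local Lipschitz inequality for $g$ does not directly apply. To handle this I would shrink $U$ further so that it is connected (or use a fixed small ball), and note that since $g$ is continuous and the $W_j$ are separated, one can route through points of a common fiber: for $x'$ near $x_0$, all preimages $\piC^{-1}(x')$ are close (one in each $W_j$) and $g$ takes the same value $f(x')$... no — rather, the cleanest route is: since $x\mapsto\|\piC(y_j(x))-\piC(y_k(x))\| = \|x-x\| = 0$ is trivially controlled along sections, but sections need not be continuous at $x_0$. The robust argument is instead to reduce to the diagonal-neighborhood statement of Proposition \ref{PropLipSatCritereGeo}/Lemma \ref{LemLocBoundIfCircPIisLocBound}: the function $h:=|f\otimes1-1\otimes f|/\|x\otimes1-1\otimes x\|$ on $X(\C)\times X(\C)$ pulls back under $\piC\times\piC$ (which is proper and surjective) to $|g\otimes1-1\otimes g|/\|\piC\otimes1-1\otimes\piC\|$, which is locally bounded on $Y(\C)\times Y(\C)$ by hypothesis; then Lemma \ref{LemLocBoundIfCircPIisLocBound} applied to $\piC\times\piC$ gives $h$ locally bounded on $X(\C)\times X(\C)$, which is precisely the statement that $f\in\KL(X(\C))$. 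This reformulation via the remark after the definition of $\KL$ sidesteps the awkward "different sheets" bookkeeping entirely, so I would present the reverse inclusion in that form. Finally, $\varphi(f)=f\circ\piC=g$, completing the proof.
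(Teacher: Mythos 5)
Your overall strategy is the same as the paper's: injectivity from dominance/surjectivity, the easy inclusion from local Lipschitzness of $f$ and of $\piC$, and, for the reverse inclusion, first producing $f\in\KO(X(\C))$ with $f\circ\piC=g$ and then concluding that $f\in\KL(X(\C))$ by applying Lemma \ref{LemLocBoundIfCircPIisLocBound} to the proper surjective map $\piC\times\piC$ — this last reduction is exactly the paper's argument, and your instinct to discard the ``different sheets'' bookkeeping in its favor is right.

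However, the way you produce $f$ contains a genuine error. You write that $g\in\KL(Y(\C))\subset\C[Y']=\C[Y]$ ``as $\pi$ finite gives $\C[Y']=\C[Y]$''. Finiteness of $\pi:Y\to X$ means $\C[Y]$ is a finite $\C[X]$-module; it says nothing about $Y$ being normal, so $\C[Y']\neq\C[Y]$ in general, and $\KL(Y(\C))$ is \emph{not} contained in $\C[Y]$: it only sits inside $\KO(Y(\C))\simeq\C[Y^+]\subset\C[Y']$ (the paper's own examples, e.g.\ $y^2/x$ on $y^4=x^5$, give locally Lipschitz rational functions that are not regular). Consequently your appeal to Proposition \ref{PropConstantFibres}, which characterizes $\C[X^+_Y]$ inside $\C[X'_Y]=\C[Y]$ (here $X'_Y=Y$ since $\pi$ is finite), does not apply to $g$ as written, and the hedge ``$g$ is a regular function on $Y$'' is false for the same reason. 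So the existence of a rational, continuous $f$ on $X$ with $f\circ\piC=g$ — the one place besides Lemma \ref{LemLocBoundIfCircPIisLocBound} where finiteness is really used — is not established in your proposal. The paper gets it by citing the descent result \cite{Bernard2021} Proposition 4.11 for continuous rational functions along a finite morphism (constancy on fibers being immediate from your inequality evaluated at a pair $(y_1,y_2)$ with $\piC(y_1)=\piC(y_2)$). Your argument can be repaired internally, e.g.\ by replacing $Y$ with its seminormalization $Y^+$: the composite $Y^+\to X$ is finite with $X'_{Y^+}=Y^+$, one has $g\in\KO(Y(\C))\simeq\C[Y^+]$ and $Y^+(\C)\to Y(\C)$ is bijective by Proposition \ref{PropSubEquiMemeFctRatioCont}, so Proposition \ref{PropConstantFibres} applied to $Y^+\to X$ yields $f\in\C[X^+_{Y^+}]\subset\KO(X(\C))$ with $f\circ\piC=g$; after this fix the rest of your argument goes through as in the paper.
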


\begin{proof}
    The map $\varphi$ can be seen as the restriction to $\KL(X(\C))$ of the morphism defined in \cite{Bernard2021} Lemma 4.9. So if $\varphi$ is well-defined, it is injective and if we consider $f\in \KL(X(\C))$, then $f\circ\piC \in \KO(Y(\C))$. Moreover, since $f$ and $\piC$ are locally Lipschitz, then for every point of $Y(\C)\times Y(\C)$, we can find some constants $C,C'>0$ and a neighborhood where
    $$ |f\circ\piC(y_1)-f\circ\piC(y_2)| \leqslant C\|\piC(y_1)-\piC(y_2)\| \leqslant CC'\|y_1-y_2\|.$$
    So $f\circ\piC \in \KL(Y(\C))$ and $\varphi$ is well-defined. From the previous first inequality, we get the inclusion 
    $$ \im(\varphi) \subset \Bigl\{ g\in\KL(Y(\C)) \mid \text{locally } |g(x)-g(y)| \leqslant C\|\piC(x)-\piC(y)\| \Bigr\} .$$
    In order to show the other inclusion, let us consider $g\in \KL(Y(\C))$ such that $|g\otimes1-1\otimes g|/\|\piC\otimes 1-1\otimes\piC\|$ is locally bounded. We know, by \cite{Bernard2021} Proposition 4.11, that there exists a function $f\in\KO(X(\C))$ such that $f\circ\piC = g$. Since $\pi$ is finite, it is surjective and so, by Lemma \ref{LemLocBoundIfCircPIisLocBound}, the function $|f\otimes1-1\otimes f|/\|x\otimes 1-1\otimes x\|$ is locally bounded. Hence $f\in \KL(X(\C))$.
\end{proof}

In the case where $\varphi$ is an isomorphism of rings, the next proposition tells us what it means for $\piC$ to be a biLipschitz homeomorphism, as defined here.
 
\begin{definition}
    Let $(M,d)$ and $(M',d')$ be two metric spaces. A map $f:M\to M'$ is a biLipschitz homeomorphism if $f$ is a bijection and if there exists a real constant $C\geqslant 1$ such that for all $x,y\in M$
    $$ \frac{1}{C}.d(x,y) \leqslant d'(f(x),f(y)) \leqslant Cd(x,y) .$$
    We say that the map $f$ is a \textit{locally} biLipschitz homeomorphism if, for every point of $M$, there exists a neighborhood on which $f$ is a biLipschitz homeomorphism.
\end{definition}

\begin{rmq}
    A map $f:M\to M'$ is a biLipschitz homeomorphism if and only if $f$ is Lipschitz, bijective, and $f^{-1}$ is Lipschitz.
\end{rmq}

\newpage
\begin{prop}\label{PropVarphiIsoSSIBilip} 
    Let $\pi : Y\to X$ be a finite morphism of affine complex varieties. Then the following properties are equivalent:
    \begin{enumerate}
        \item The morphism $\varphi : \KL(X(\C))\to \KL(Y(\C))$ is an isomorphism of rings.
        \item The morphism $\piC$ is a locally biLipschitz homeomorphism.
    \end{enumerate}
\end{prop}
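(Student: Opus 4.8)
The plan is to prove both implications using the characterizations already established. For the direction $(2)\Rightarrow(1)$: if $\piC$ is a locally biLipschitz homeomorphism, then in particular it is bijective, hence by Proposition \ref{PropSubEquiMemeFctRatioCont} the extension $\pi^*:\C[X]\inj\C[Y]$ is subintegral and $\pi^*:\KO(X(\C))\to\KO(Y(\C))$ is an isomorphism. Since $\varphi$ is the restriction of this isomorphism to $\KL(X(\C))$, it remains only to check that its inverse sends $\KL(Y(\C))$ into $\KL(X(\C))$; equivalently, using Proposition \ref{PropVarphiInjEtIm}, that every $g\in\KL(Y(\C))$ satisfies locally $|g(y_1)-g(y_2)|\leqslant C\|\piC(y_1)-\piC(y_2)\|$. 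This follows by combining the local Lipschitz bound for $g$ with the local lower bound $\|y_1-y_2\|\leqslant C'\|\piC(y_1)-\piC(y_2)\|$ coming from the fact that $\piC^{-1}$ is locally Lipschitz — one has to be a little careful about neighborhoods, choosing a compact neighborhood $W$ of a point $x_0\in X(\C)$ on which $\piC^{-1}$ is $C'$-Lipschitz and then working inside $\piC^{-1}(W)$, which is a neighborhood of each point in the (finite) fiber over $x_0$.

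For the direction $(1)\Rightarrow(2)$: assume $\varphi$ is an isomorphism. First, the coordinate functions $x_i$ of $X$ (restricted, or rather pulled back) lie in $\C[X]\subset\KL(X(\C))$, so their images $x_i\circ\piC$ generate the relevant data; the key point is that since $\varphi$ is surjective, for each coordinate function $y_j$ of $Y$ there is a locally Lipschitz rational $h_j\in\KL(X(\C))$ with $h_j\circ\piC = y_j$ on $Y(\C)$. Surjectivity of $\varphi$ combined with Proposition \ref{PropVarphiInjEtIm} shows that each such $y_j$ satisfies locally $|y_j(y_1)-y_j(y_2)|\leqslant C\|\piC(y_1)-\piC(y_2)\|$; summing over $j$ gives a local bound $\|y_1-y_2\|\leqslant C\|\piC(y_1)-\piC(y_2)\|$, i.e. $\piC^{-1}$ is locally Lipschitz (once we know $\piC$ is injective). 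Injectivity of $\piC$ follows because $\varphi$ being an isomorphism forces $\KO(X(\C))\to\KO(Y(\C))$ to be injective onto a ring containing all $y_j$, hence an isomorphism, so Proposition \ref{PropSubEquiMemeFctRatioCont} (applied to the finite morphism $\pi$) gives that $\pi^*:\C[X]\inj\C[Y]$ is subintegral and $\piC$ is a bijection — alternatively, the local estimate $\|y_1-y_2\|\leqslant C\|\piC(y_1)-\piC(y_2)\|$ together with finiteness (hence properness) of $\piC$ already forces injectivity since two points in a fiber would have to coincide. Finally, $\piC$ itself is automatically locally Lipschitz (indeed polynomial), so we conclude $\piC$ is a locally biLipschitz homeomorphism.

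I expect the main obstacle to be the bookkeeping with neighborhoods when passing between $X(\C)$ and $Y(\C)$: the word "locally" in $\KL$ refers to neighborhoods of points of $Y(\C)\times Y(\C)$ (resp. $X(\C)\times X(\C)$), and a single neighborhood upstairs need not come from one downstairs, so one must use properness and the finiteness of fibers of $\piC$ to intersect the finitely many relevant neighborhoods and produce a usable common neighborhood. The cleanest way to handle this is to systematically reduce to compact neighborhoods — using Lemma \ref{LemLocBoundOnCompact} and Lemma \ref{LemLocBoundIfCircPIisLocBound} as in the proof of Proposition \ref{PropVarphiInjEtIm} — so that all the Lipschitz constants can be chosen uniformly on compact pieces and the local-to-compact equivalence does the work. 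Once the framework of compact neighborhoods is set up, both implications are short.
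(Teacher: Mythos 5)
Your proposal is essentially correct. The direction (2)$\Rightarrow$(1) is the same as the paper's: combine the local Lipschitz bound for $g\in\KL(Y(\C))$ with the lower biLipschitz bound for $\piC$ and invoke the description of $\im(\varphi)$ in Proposition \ref{PropVarphiInjEtIm}. For (1)$\Rightarrow$(2) you take a partly different route. The paper proves bijectivity of $\piC$ purely algebraically: from $\C[X]\inj\KL(X(\C))\inj\KO(X(\C))$ and $\KL(X(\C))\simeq\KL(Y(\C))\inj\C[Y^+]$ it deduces, via Proposition \ref{PropSousExtSubEstSub} and Theorem \ref{TheoIsoSchemaSN}, that $\C[X]\inj\C[Y^+]$ is subintegral, hence $\KO(X(\C))\simeq\KO(Y(\C))$, and Proposition \ref{PropSubEquiMemeFctRatioCont} gives bijectivity; it then gets the Lipschitz bound for $\piC^{-1}$ from $p_i\circ\piC^{-1}\in\KL(X(\C))$, with all neighborhoods taken in $X(\C)\times X(\C)$, so no transfer of neighborhoods between $Y$ and $X$ is needed. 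Your alternative argument for injectivity --- surjectivity of $\varphi$ puts each coordinate function $y_j$ in $\im(\varphi)$, so Proposition \ref{PropVarphiInjEtIm} gives locally $\|y_1-y_2\|\leqslant C\|\piC(y_1)-\piC(y_2)\|$, and evaluating at a pair lying in one fiber forces the two points to coincide --- is valid and more elementary than the paper's (properness is not even needed for that step).

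Two caveats. First, your initial justification of injectivity (``$\varphi$ an isomorphism forces $\KO(X(\C))\to\KO(Y(\C))$ to be injective onto a ring containing all $y_j$, hence an isomorphism'') is not a proof as stated: knowing that the image contains $\C[Y]$ does not by itself give surjectivity onto $\KO(Y(\C))$. To repair that route one should argue that the image is subintegral over $\C[X]$ (being isomorphic to the subintegral extension $\C[X]\inj\C[X^+]$), so by Proposition \ref{PropSousExtSubEstSub} the intermediate extension $\C[X]\inj\C[Y]$ is subintegral, and then Proposition \ref{PropSubEquiMemeFctRatioCont} applies --- which is essentially the paper's argument. Since you also supply the metric argument, this is not fatal. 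Second, two points should be made explicit: surjectivity of $\piC$ (required for ``homeomorphism'') comes from finiteness and dominance of $\pi$; and to convert your estimate on neighborhoods in $Y(\C)\times Y(\C)$ into local Lipschitzness of $\piC^{-1}$ on neighborhoods in $X(\C)$ you must use that $\piC$ is proper, hence closed, so that the image of a neighborhood of $\piC^{-1}(x_0)$ is a neighborhood of $x_0$ --- exactly the bookkeeping issue you flag, and your proposed reduction to compact neighborhoods handles it.
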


\begin{proof}
    Suppose that $\piC$ is a locally biLipschitz homeomorphism, and let $f\in \KL(Y(\C))$. Then, for all $x_0,y_0\in Y(\C)$, we have $C_1,C_2>0$ and two neighborhoods $V_1,V_2\in \Vcal_{(Y\times Y)(\C)}(x_0,y_0)$ such that 
    $$ \forall (x,y)\in V_1\text{\hspace{0.8cm} }\frac{1}{C_1}.\|x-y\| \leqslant \|\piC(x)-\piC(y)\| \leqslant C_1.\|x-y\| $$
    and $$ \forall (x,y)\in V_2\text{\hspace{0.8cm} }|f(x)-f(y)| \leqslant C_2.\|x-y\| .$$
    So, on $V_1\cap V_2$, we have 
    $$ |f(x)-f(y)| \leqslant C_2.\|x-y\| \leqslant C_2.C_1.\|\piC(x)-\piC(y)\| .$$
    By Proposition \ref{PropVarphiInjEtIm}, this means that $f\in \im(\varphi)$ and so $\varphi$ is an isomorphism. Conversely, suppose that $\varphi$ is an isomorphism, we want to show that $\piC$ is a biLipschitz homeomorphism. First, let us see that $\piC$ is bijective. We have $\C[X]\inj \KL(X(\C))\inj \KO(X(\C)) \simeq \C[X^+]$, so, by Proposition \ref{PropSousExtSubEstSub}, the extension $\C[X]\inj \KL(X(\C))$ is subintegral. We also have $\KL(X(\C))\simeq \KL(Y(\C)) \inj \C[Y^+]$, so the extension $\KL(X(\C))\inj \C[Y^+]$ is subintegral. Hence, we get $\C[X]\inj \C[Y^+]$ and the extension is subintegral. By Proposition \ref{PropSubEquiMemeFctRatioCont}, this means that $\KO(X(\C)) \simeq \KO(Y^+(\C))$. By Theorem \ref{TheoIsoSchemaSN}, we have, $\KO(Y^+(\C))\simeq \C[Y^+] \simeq \KO(Y(\C))$. Finally, we get $\KO(X(\C))\simeq \KO(Y(\C))$ and Proposition \ref{PropSubEquiMemeFctRatioCont}, tells us that $\piC$ is bijective. Now, to see that $\piC$ is locally biLipschitz, let us consider $p_i\in \C[Y]$ such that
    $$\begin{array}{cccc}
        p_i : & Y(\C) & \to & \C \\
         & (y_1,\dots,y_n) & \mapsto & y_i
    \end{array}.$$
    In particular, we have $p_i\in \KL(Y(\C))$ and since $\varphi$ is an isomorphism, we get $p_i\circ\piC^{-1} \in \KL(X(\C))$. Hence for all $i\in \{1,\dots,n\}$, for all $x_0,y_0\in X(\C)$, there exists $C_i>0$ and $V_i\in\Vcal_{X(\C)\times X(\C)}(x_0,y_0)$ such that 
    $$ \forall (x,y)\in V_i\text{\hspace{0.8cm} }|p_i\circ\piC^{-1}(x)-p_i\circ\piC^{-1}(y)| \leqslant C_i.\|x-y\|.$$
    So, on $\cap_i V_i$, we obtain
    $$ \|\piC^{-1}(x)-\piC^{-1}(y)\| = \sup_i|p_i\circ\piC^{-1}(x)-p_i\circ\piC^{-1}(y)| \leqslant \sup_i(C_i).\|x-y\| .$$
\end{proof}

We state now the main theorem of this section, which links the relative Lipschitz seminormalization of a variety in another with locally Lipschitz rational functions.

\begin{theorem}\label{TheoLipSatEstLipRatio}
    Let $\pi : Y\to X$ be a dominant morphism of affine varieties. Then 
    $$ \C[X^{L,+}_Y] \simeq \KL(X(\C))\times_{\KL(Y(\C))} \C[Y].$$
\end{theorem}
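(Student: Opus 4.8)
The plan is to unwind both sides of the claimed isomorphism into subrings of $\C[X'_Y]$ (equivalently, of $\K(X)$) and check they coincide. First recall the chain $\C[X] \inj \C[X^{L,+}_Y] \inj \C[X'_Y]$ and the fact, from the remark preceding Example \ref{ExempleQuestionTeissier}, that the morphism $X^{L,+}_Y \to X$ is finite and $\C[X^{L,+}_Y] \inj \C[X^+] \simeq \KO(X(\C))$. So I would regard $\C[X^{L,+}_Y]$ as living inside $\KO(X(\C))$, hence its elements are honest rational functions on $X$ extending continuously to $X(\C)$. The fiber product on the right consists of pairs $(f,g)$ with $f \in \KL(X(\C))$, $g \in \C[Y]$, and $f\circ\piC = g$ (using the map $\KL(X(\C)) \to \KL(Y(\C))$, $f \mapsto f\circ\piC$, from the inclusion $\KL(Y(\C)) \subset \K(Y)$ and composition of rational functions); since this structural map is injective when $\pi$ is dominant — $\piC$ has dense image — a pair is determined by its first coordinate $f$, and the fiber product is isomorphic to the subring $\{\, f \in \KL(X(\C)) \mid f\circ\piC \in \C[Y] \,\}$ of $\KL(X(\C))$.

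With this identification, the theorem reduces to the equality of two subrings of $\K(X)$:
\[
\C[X^{L,+}_Y] \;=\; \bigl\{\, f \in \KL(X(\C)) \;\big|\; f\circ\piC \in \C[Y] \,\bigr\}.
\]
For the inclusion $\subseteq$: take $p \in \C[X^{L,+}_Y]$, so $p \in \C[X'_Y]$ and $p\otimes 1 - 1\otimes p \in \overline{\ker\phi_{\C[Y]}}$ in $\C[Y]\otimes_\C \C[Y]$. Apply Proposition \ref{PropEclatementNormalise} (via the geometric criterion, exactly as in the proof of Proposition \ref{PropLipSatCritereGeo}) to conclude that locally on $Y(\C)\times Y(\C)$ one has $|p(y_1)-p(y_2)| \leqslant C\|\piC(y_1)-\piC(y_2)\|$, which is the statement that $p$, viewed as a locally Lipschitz function on $X(\C)$ (it is in $\KO(X(\C))$, and this inequality plus finiteness/surjectivity of $\piC$ and Lemma \ref{LemLocBoundIfCircPIisLocBound} upgrade local boundedness of the relevant difference quotient to $Y(\C)$'s image, hence to $X(\C)$), actually lies in $\KL(X(\C))$; and since $p \in \C[X'_Y]$, we have $p\circ\piC \in \C[Y]$ automatically because $\C[X'_Y] \subset \C[Y]$... careful: here $\C[X'_Y] \subset \C[Y]$ indeed since $X'_Y$ is the relative normalization of $X$ in $Y$, so $p$ itself lands in $\C[Y]$ and $p\circ\piC$ is just $p$ seen in $\C[Y]$. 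For the inclusion $\supseteq$: take $f \in \KL(X(\C))$ with $f\circ\piC =: g \in \C[Y]$. Then $g$ is integral over $\C[X]$ (it is a regular function on $Y$ whose restriction comes from a bounded — indeed Lipschitz — rational function; more simply, $f \in \KL(X(\C)) \subset \C[X'] $, and one checks $g = f\circ\piC \in \C[X'_Y]$), so $g \in \C[X'_Y]$. It remains to see $g\otimes 1 - 1\otimes g \in \overline{\ker\phi_{\C[Y]}}$: since $f$ is locally Lipschitz, locally on $X(\C)\times X(\C)$ we have $|f(x_1)-f(x_2)| \leqslant C\|x_1-x_2\|$; composing with the (continuous, proper, surjective because finite) map $\piC\times\piC$ and using Lemma \ref{LemLocBoundCircPIisLocBound} gives, locally on $Y(\C)\times Y(\C)$, $|g(y_1)-g(y_2)| \leqslant C\|\piC(y_1)-\piC(y_2)\|$, which by Proposition \ref{PropLipSatCritereGeo} (equivalently by the geometric criterion of Proposition \ref{PropEclatementNormalise} applied to the ideal $\ker\phi_{\C[Y]}$) is exactly $g \in \C[X]^L_{\C[Y]}$; combined with $g \in \C[X'_Y]$ this says $g \in \C[X^{L,+}_Y]$.

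The main technical obstacle, and the point requiring the most care, is the passage between a Lipschitz estimate "on $X(\C)\times X(\C)$ against $\|x_1-x_2\|$" and one "on $Y(\C)\times Y(\C)$ against $\|\piC(y_1)-\piC(y_2)\|$" in both directions, since $\pi$ is only assumed dominant, not finite — so one cannot directly invoke Proposition \ref{PropVarphiInjEtIm}. The resolution is to factor through the relative normalization: $\C[X^{L,+}_Y] \subseteq \C[X'_Y]$ and the morphism $X'_Y \to X$ is finite (Stacks, Lemma 29.53.15, cited in the text), so all the local-boundedness lemmas (Lemmas \ref{LemLocBoundCircPIisLocBound}, \ref{LemLocBoundOnCompact}, \ref{LemLocBoundIfCircPIisLocBound}) and Proposition \ref{PropVarphiInjEtIm} can be applied to $\piC' : X'_Y \to X$ restricted to closed points, which is finite, while the extra datum "$g \in \C[Y]$" versus "$g \in \C[X'_Y]$" is precisely what distinguishes the seminormalization $X^{L,+}_Y$ (the fiber product recording regularity of $g$ on all of $Y$) from the ambient $X'_Y$. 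Concretely one first proves the statement for the finite morphism $\piC'$, obtaining $\C[X^L_{\C[X'_Y]}] \simeq \KL(X(\C))$ — the algebraic Pham–Teissier theorem — and then intersects with $\C[Y]$ inside $\C[X'_Y]$, using the fiber-product description of Theorem \ref{TheoIsoSchemaSN} at the level of $\KO$ to control which locally Lipschitz rational functions on $X$ pull back into $\C[Y]$ rather than merely into $\C[X'_Y]$.
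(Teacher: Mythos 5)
Your overall strategy (identify the fiber product with the subring $\{f\in\KL(X(\C))\mid f\circ\piC\in\C[Y]\}$, then compare with $\C[X]^{L,+}_{\C[Y]}$ via the geometric criterion of Proposition \ref{PropLipSatCritereGeo} and the locally-bounded lemmas) is close in spirit to the paper's, and your $\supseteq$ inclusion is essentially correct: local Lipschitzness of $f$ on $X(\C)$ composed with the continuous map $\piC\times\piC$ gives the estimate on $Y(\C)\times Y(\C)$, hence $g=f\circ\piC\in\C[X]^L_{\C[Y]}$, and integrality of $f$ over $\C[X]$ pulls back to give $g\in\C[X'_Y]$. The genuine gap is in the $\subseteq$ direction. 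From $p\in\C[X]^{L,+}_{\C[Y]}$, Proposition \ref{PropLipSatCritereGeo} only yields the inequality locally at points of $Y(\C)\times Y(\C)$; to conclude $p\in\KL(X(\C))$ you must produce Lipschitz bounds at every point of $X(\C)\times X(\C)$. Your transfer step invokes ``finiteness/surjectivity of $\piC$'' together with Lemma \ref{LemLocBoundIfCircPIisLocBound}, but $\pi$ is only dominant: $\piC$ is in general neither proper nor surjective, and its image can miss exactly the closed points where local Lipschitzness has to be checked. You flag this yourself in the final paragraph, but the proposed repair does not close the gap.

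Two concrete problems with the repair. First, the intermediate claim $\C[X]^L_{\C[X'_Y]}\simeq\KL(X(\C))$ is false in general: already for $Y=X$ one has $X'_Y=X$ and $\C[X]^L_{\C[X]}=\C[X]$, while $\KL(X(\C))$ can be strictly larger (e.g. the union of three lines, where $2xy/(x+y)$ is locally Lipschitz but not regular); the algebraic Pham--Teissier statement requires saturating in the full normalization $\C[X']$, and $\C[X'_Y]$ is in general not comparable with $\C[X']$. Second, even after factoring through the finite morphism $X'_Y\to X$, the estimate you actually possess is on $Y(\C)\times Y(\C)$ against $\|\piC(y_1)-\piC(y_2)\|$ (the integral closure in the definition is taken in $\C[Y]\otimes_\C\C[Y]$), whereas Lemma \ref{LemLocBoundIfCircPIisLocBound} and Proposition \ref{PropVarphiInjEtIm} need it on $X'_Y(\C)\times X'_Y(\C)$ against $\|\piC'(y_1)-\piC'(y_2)\|$; the map $Y(\C)\to X'_Y(\C)$ is not proper or surjective, so none of the quoted lemmas perform this conversion, and you give no density/continuity argument supplying it. This is exactly where the paper's proof starts from the Corollary to Proposition \ref{PropLipSatCritereGeo} (the characterization of $\C[X]^{L,+}_{\C[Y]}$ by the estimate on $X'_Y(\C)$ with respect to $\piC'$) and then descends through the finite morphisms $X'_Y\to X^+_Y\to X$, using the description $\C[X^+_Y]\simeq\KO(X(\C))\times_{\KO(Y(\C))}\C[Y]$ of Theorem \ref{TheoIsoSchemaSN} to produce the rational function $g$ on $X$ before applying Lemmas \ref{LemLocBoundCircPIisLocBound} and \ref{LemLocBoundIfCircPIisLocBound}. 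Without securing that $X'_Y$-level characterization, your $\subseteq$ inclusion is not proved.
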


\begin{proof}
    Let us write 
    $$\xymatrix{
        Y \ar@{->}[r] & X'_Y \ar@{->}[r]^{\widetilde{\pi}_Y} \ar@/_1pc/[rr]_{\pi'_Y} & X^+_Y \ar@{->}[r]^{\pi^+_Y} & X
    }$$
    Let $f\in \C[X]^{L,+}_{\C[Y]}$, this means that $f\in \C[X]'_{\C[Y]}$ and $|f\otimes1-1\otimes f|/\|\pi'_Y\otimes 1-1\otimes \pi'_Y\|$ is locally bounded on $X'_Y(\C)$. By Lemmas \ref{LemLocBoundCircPIisLocBound} and \ref{LemLocBoundIfCircPIisLocBound}, this is equivalent to have $f\in\C[X]^+_{\C[Y]}$ and $|f\otimes1-1\otimes f|/\|\pi^+_Y\otimes 1-1\otimes \pi^+_Y\|$ locally bounded on $X^+_Y(\C)$ because $\widetilde{\pi_Y}$ is finite and 
    $$ \C[X]^{L,+}_{C[Y]} \subset \C[X]^+_{C[Y]} \subset \C[X]'_{C[Y]}. $$
    By \cite{BFMQ} Theorem 1.19, we have $\C[X^+_Y] = \KO(X(\C))\times_{\KO(Y(\C))} \C[Y]$. So $f\in \C[X]^{L,+}_{\C[Y]}$ if and only if there exists $g\in \KO(X(\C))$ such that $g\circ\pi^+_{Y(\C)} = f$ and ${|g\circ\pi^+_{Y(\C)}\otimes1-1\otimes g\circ\pi^+_{Y(\C)}|/\|\pi^+_{Y(\C)}\otimes 1-1\otimes \pi^+_{Y(\C)}\|}$ is locally bounded. Since $\pi^+_Y$ is finite, we can apply Lemmas \ref{LemLocBoundCircPIisLocBound} and \ref{LemLocBoundIfCircPIisLocBound} so the condition becomes "$|g\otimes1-1\otimes g|/\|x\otimes 1-1\otimes x\|$ is locally bounded". Then, this is exactly asking for $g$ to be in $\KL(X(\C))$. Hence $f\in \C[X]^{L,+}_{\C[Y]}$ if and only if $f\in \KL(X(\C))\times_{\KL(Y(\C))} \C[Y]$.
\end{proof}

If $X$ is a normal variety, then we get $\C[X] \inj \KL(X(\C))\to \KO(X(\C)) = \C[X]$ by \ref{PropLocalBoundIffIntegral}. Then, by taking $Y = X'$ in Theorem \ref{TheoLipSatEstLipRatio}, we get an algebraic version of Pham-Teissier's Theorem, which states that the locally Lipschitz rational functions correspond to the regular functions of the Lipschitz saturation.

\begin{cor}\label{CorLipSatEstLipRatio}
    Let $X$ be an affine complex variety and $\pi^L : X^L \to X$ be its Lipschitz saturation. The morphism
    $$\begin{array}{cccc}
        \varphi : & \KL(X(\C)) & \to & \C[X^L]\\
         & f & \mapsto & f\circ\piC^L
    \end{array}$$
    is an isomorphism.
\end{cor}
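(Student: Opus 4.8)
The plan is to deduce Corollary~\ref{CorLipSatEstLipRatio} directly from Theorem~\ref{TheoLipSatEstLipRatio} by specializing $Y$ to the normalization $X'$. First I would recall that, since $X'$ is the normalization, the extension $\C[X]\inj \C[X']$ is integral, so $\C[X'_{X'}] = \C[X']$ and hence $X^{L,+}_{X'} = X^L$ by the discussion following the definition of the Lipschitz seminormalization; this identifies the left-hand side of Theorem~\ref{TheoLipSatEstLipRatio} (with $Y = X'$) as $\C[X^L]$. Thus Theorem~\ref{TheoLipSatEstLipRatio} gives the isomorphism
$$ \C[X^L] \simeq \KL(X(\C))\times_{\KL(X'(\C))} \C[X']. $$

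Next I would show that this fiber product collapses to $\KL(X(\C))$. The fiber product sits inside $\KL(X(\C))\times \C[X']$ as the pairs $(f,g)$ that agree after being pushed into $\KL(X'(\C))$, i.e. with $f\circ\piC' = g$ as elements of $\KL(X'(\C))$. The key observation is that $\C[X'] = \KL(X'(\C)) = \KO(X'(\C))$: indeed, $X'$ being normal, every locally bounded rational function on $X'$ is already regular by Proposition~\ref{PropLocalBoundIffIntegral}, and $\KL(X'(\C))\subset\KO(X'(\C))\subset\C[X']\subset\KL(X'(\C))$ forces all three to coincide. Consequently the second coordinate $g$ is entirely determined by the first coordinate $f$ via $g = f\circ\piC'$, and the compatibility condition defining the fiber product is automatically satisfied once one knows $f\circ\piC'\in\C[X']$. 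So I need exactly that: for $f\in\KL(X(\C))$, the composite $f\circ\piC'$ is a regular function on $X'$. This is precisely the content of Proposition~\ref{PropVarphiInjEtIm} applied to the finite morphism $\pi':X'\to X$, which shows the map $\varphi: f\mapsto f\circ\piC'$ lands in $\KL(X'(\C)) = \C[X']$ and is injective; combined with the collapse of the fiber product it is also surjective onto $\C[X^L]$. Hence $\varphi$ is the desired isomorphism.

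I would therefore write the proof as: (1) invoke Theorem~\ref{TheoLipSatEstLipRatio} with $Y=X'$ and note $X^{L,+}_{X'} = X^L$; (2) observe $\C[X'] = \KL(X'(\C))$ using normality and Proposition~\ref{PropLocalBoundIffIntegral}, so the fiber product $\KL(X(\C))\times_{\KL(X'(\C))}\C[X']$ is canonically isomorphic to $\KL(X(\C))$ via the projection to the first factor, with inverse $f\mapsto (f, f\circ\piC')$; (3) identify the resulting composite isomorphism $\C[X^L]\simeq\KL(X(\C))$ with the inverse of $\varphi$, i.e. check that under the identification of Theorem~\ref{TheoLipSatEstLipRatio} the element of $\C[X^L]$ corresponding to $f\in\KL(X(\C))$ pulls back along $\piC^L$ to $f$. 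The one point requiring a little care — and the main (mild) obstacle — is step (3): matching the abstract fiber-product isomorphism of Theorem~\ref{TheoLipSatEstLipRatio} with the explicit formula $f\mapsto f\circ\piC^L$, which amounts to unwinding how $\C[X]^{L,+}_{\C[X']}$ sits inside $\C[X']$ and checking that the rational function $f$ on $X$ is recovered as the one whose pullback to $X^L$ gives that element; this is essentially bookkeeping about the factorization $X'\to X^L\to X$ rather than a substantive difficulty.
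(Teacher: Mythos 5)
Your proposal is correct and takes essentially the same route as the paper: specialize Theorem \ref{TheoLipSatEstLipRatio} to $Y=X'$, note $X^{L,+}_{X'}=X^L$ since $\pi'$ is finite, and use normality of $X'$ together with Proposition \ref{PropLocalBoundIffIntegral} to identify $\KL(X'(\C))$ with $\C[X']$, so the fiber product collapses onto $\KL(X(\C))$. The extra bookkeeping you flag in step (3), matching the fiber-product isomorphism with the explicit map $f\mapsto f\circ\piC^L$, is exactly the implicit content of the paper's two-line deduction and poses no difficulty.
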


\subsection{For non-affine complex varieties}\label{Subnonaffine}

Let $X$ be an algebraic variety over $\C$. Consider a Zariski open set $U\subset X$, a finite number of regular functions $f_1,\dots,f_m$ on $U$ and a number $\varepsilon\in \R$. Then the sets of the form
$$ V(U;f_1,\dots,f_m,\varepsilon) := \Bigl\{ x\in U(\C) \mid |f_i(x)|<\varepsilon \text{ for }i=1,\dots,m \Bigr\} $$
form a basis for the open sets of a topology on $X(\C)$ called the \textit{strong topology}. If $X$ is affine, then $X(\C)\subset \C^n$, for some $n\in \mathbb{N}$ and the strong topology is induced by the Euclidean topology of $\C^n$. We can consider the sheaf $\KL_{X(\C)}$ such that $\KL_{X(\C)}(U(\C))$ is the set of locally Lipschitz rational functions, for any Z-open set $U$ of $X$. A dominant morphism $\pi:Y\to X$ between algebraic varieties induces an extension $\KL_{X(\C)} \to (\piC)_*\KL_{Y(\C)}$, hence a morphism $(Y(\C),\KL_{Y(\C)}) \to (X(\C),\KL_{X(\C)})$ of ringed spaces. 

Let $\pi : Y \to X$ be a dominant morphism between algebraic varieties over $k$, then we can define the $\Ocal_X$-module $(\Ocal_X)^{L}_Y$ such that $(\Ocal_X)^{L}_Y(U) = (\Ocal_X(U))^{L}_{\pi_*\Ocal_Y(U)}$ for each open set $U\subset X$. By \cite{Lipman1975} Proposition 3.1, remark (ii), we have that $(\Ocal_X)^L_Y$ is quasi-coherent and so, by \cite{EGAII} II Proposition 1.3.1, it corresponds to the structural sheaf of a variety $X^{L}_Y$. The relative normalization $X'_Y$ being well-defined (see \cite{BFMQ} Definition 1.8 for example), we can also consider the Lipschitz seminormalization $X^{L,+}_Y$.

Hence, all the previous results are valid for non-affine varieties.\\

In particular, by Corollary \ref{CorLipSatEstLipRatio}, we have an isomorphism of ringed spaces $(X(\C),\K_{X(\C)}^L) \simeq (X^L,\Ocal_{X^L})$. As a consequence, we can state that, as in the analytic case, the Lipschitz saturation determines a variety up to birational locally biLipschitz equivalence.

\begin{definition}
    Let $X$ and $Y$ be algebraic varieties over $\C$. We say that a map $h : Y(\C)\to X(\C)$ is birationally locally biLipschitz if it is a locally biLipschitz homeomorphism and if there exist two Z-dense Z-open $U_1 \subset Y(\C)$ and $U_2 \subset X(\C)$ such that $h_{\mid U_1} : U_1\to U_2$ is an isomorphism. In this situation, we say that $X$ and $Y$ are birationally locally biLipschitz equivalent.
\end{definition}

Note that two varieties can be birationally locally biLipschitz equivalent without admitting a morphism from one to another.

\begin{theorem}\label{TheoFinal}
    Let $X$ and $Y$ be two complex algebraic varieties. Then $X(\C)$ and $Y(\C)$ are birationally locally biLipschitz equivalent if and only if $X^L$ and $Y^L$ are isomorphic.
\end{theorem}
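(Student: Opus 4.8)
The plan is to prove the two implications separately, using the sheaf-theoretic isomorphism $(X(\C),\KL_{X(\C)}) \simeq (X^L,\Ocal_{X^L})$ from Corollary \ref{CorLipSatEstLipRatio} (extended to non-affine varieties in Subsection \ref{Subnonaffine}) as the main bridge between geometry and algebra.

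\medskip
\noindent\emph{Proof sketch.} Suppose first that $X^L$ and $Y^L$ are isomorphic, say via an algebraic isomorphism $\Phi : Y^L \to X^L$. Composing with the finite birational morphisms $\pi^L_X : X^L \to X$ and $\pi^L_Y : Y^L \to Y$, which are both locally biLipschitz on closed points, I would build a birationally locally biLipschitz map $h : Y(\C) \to X(\C)$. Here one should note that $\pi^L_X$ is itself birationally locally biLipschitz: it is birational because the normalization is birational and $\C[X] \inj \C[X^L] \inj \C[X']$, and it is a locally biLipschitz homeomorphism on closed points by Proposition \ref{PropVarphiIsoSSIBilip} applied to the identity isomorphism $\varphi$ on $\KL(X(\C)) = \C[X^L]$ (via Corollary \ref{CorLipSatEstLipRatio}), since $X^L$ is Lipschitz saturated so that $(X^L)^L = X^L$. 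Composing $\pi^L_X \circ \Phi \circ (\pi^L_Y)^{-1}$ on the birational locus gives the required map, using that a composition of (local) biLipschitz homeomorphisms is (locally) biLipschitz and that a composition of birational maps between the given Z-dense Z-open sets is birational.

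\medskip
\noindent Conversely, suppose $h : Y(\C) \to X(\C)$ is birationally locally biLipschitz, with isomorphism $h_{|U_1} : U_1 \to U_2$ on Z-dense Z-open sets. The key is that $h$ induces an isomorphism of sheaves of locally Lipschitz rational functions $\KL_{X(\C)} \xrightarrow{\sim} h_* \KL_{Y(\C)}$: a rational function $f$ on $X$ corresponds to a locally Lipschitz function on (a subset of) $X(\C)$ with Zariski-closed graph (Proposition after Corollary 4.18 of \cite{Bernard2021}, stated in the excerpt), and $f \mapsto f \circ h$ sends such a function to a locally Lipschitz function on $Y(\C)$; it is rational because $h$ restricts to an isomorphism on $U_1$, so $f\circ h$ agrees on a Z-dense Z-open set with a genuine rational function, and it has Zariski-closed graph because graphs transport under the isomorphism $h_{|U_1}$ and taking Zariski closures. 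The inverse is given by $g \mapsto g \circ h^{-1}$, which is locally Lipschitz since $h^{-1}$ is locally Lipschitz. Thus $(X(\C),\KL_{X(\C)}) \simeq (Y(\C),\KL_{Y(\C)})$ as ringed spaces, and applying the isomorphism of Corollary \ref{CorLipSatEstLipRatio} on both sides yields $\Ocal_{X^L} \simeq \Ocal_{Y^L}$, hence $X^L \simeq Y^L$ as schemes. One has to be slightly careful that the ringed-space isomorphism respects the scheme structure, but since $X^L$ and $Y^L$ are affine (or glued from affines) with coordinate rings exactly the global locally Lipschitz rational functions, the global sections functor recovers the scheme.

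\medskip
\noindent The main obstacle I anticipate is the converse direction, specifically checking that $f \circ h$ is \emph{rational} (not merely continuous locally Lipschitz) on all of $X(\C)$ when $f$ is: this requires genuinely using the birationality hypothesis — on $U_1$ the function $f \circ h$ is manifestly regular (pullback along an isomorphism), and then the Zariski-closed-graph characterization of $\KL(Y(\C))$ forces the locally Lipschitz extension of $f\circ h|_{U_1}$ to all of $Y(\C)$ to lie in $\K(Y)$. One must verify that the Zariski closure of the graph of $f\circ h|_{U_1}$ is indeed the graph of the continuous extension, which should follow from the properness/continuity arguments already packaged in \cite{Bernard2021}. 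The rest is bookkeeping: composition of biLipschitz maps, functoriality of the $\KL$-sheaf construction, and the fact that the Lipschitz saturation is Lipschitz saturated, which makes $\pi^L : X^L \to X$ itself a birational locally biLipschitz homeomorphism and hence a legitimate building block.
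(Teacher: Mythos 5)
Your "isomorphism $\Rightarrow$ biLipschitz equivalence" direction is essentially the paper's: compose the two saturation morphisms (which are finite, birational, and locally biLipschitz on closed points by the remark following Lemma \ref{LemIdempotent}) with the isomorphism $\Phi$; that part is fine. The gap is in the converse. Your argument produces, at best, an isomorphism between the rings of \emph{global} locally Lipschitz rational functions $\KL(X(\C))$ and $\KL(Y(\C))$, and then you invoke "the global sections functor recovers the scheme." This fails for the theorem as stated, which concerns arbitrary complex algebraic varieties: when $X$ is not affine (say projective), global sections of $\Ocal_{X^L}$ do not determine $X^L$ at all, and your parenthetical "glued from affines" begs the question, since gluing requires knowing that $h$ matches up Zariski-open subsets of $X(\C)$ and $Y(\C)$. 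The same issue already undermines the sheaf map you propose: $h_*\KL_{Y(\C)}$ is only a sheaf on the Zariski topology of $X(\C)$ (which is the topology underlying the ringed space of Subsection \ref{Subnonaffine}) if $h$ is Zariski-continuous, and to get an isomorphism of ringed spaces you need $h$ to be a homeomorphism for the Zariski topology. You never establish this, and it is not formal: it is exactly the nontrivial input the paper takes from \cite{BFMQ} Proposition 4.17, namely that a birational locally biLipschitz (indeed Euclidean) homeomorphism of closed points is automatically a Zariski homeomorphism. Once that is in place, the ringed spaces $(X(\C),\KL_{X(\C)})$ and $(Y(\C),\KL_{Y(\C)})$ are isomorphic and Corollary \ref{CorLipSatEstLipRatio} (in its sheaf form) gives $X^L\simeq Y^L$; without it, your construction does not even produce a morphism of ringed spaces in the non-affine case.

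The secondary point you flag yourself --- that $f\circ h$ is rational with Zariski-closed graph, so that it lies in $\KL(Y(\C))$ --- is indeed checkable from the results of \cite{Bernard2021} (a locally Lipschitz, hence continuous, function agreeing with a rational function on a Z-dense Z-open set is a continuous rational function and has Zariski-closed graph), so I would not count it as the missing idea; but note that once you have the Zariski-homeomorphism statement from \cite{BFMQ}, this whole graph discussion becomes unnecessary, since the isomorphism of $\KL$-sheaves is immediate from $h$ being a Zariski homeomorphism that is locally biLipschitz together with birationality. So the fix is to cite (or reprove) that proposition and then run the argument at the level of ringed spaces over the Zariski topology, exactly as the paper does, rather than at the level of global sections.
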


\begin{proof}
    Assume that $h : Y(\C) \to X(\C)$ is a locally biLipschitz and birational map. By \cite{BFMQ} Proposition 4.17, we get that $h$ is a homeomorphism for the Zariski topology and thus the ringed spaces $(Y(\C), \KL_{Y(\C)})$ and $(X(\C),\KL_{X(\C)})$ are isomorphic. By Corollary \ref{CorLipSatEstLipRatio}, we get that $X^L$ and $Y^L$ are isomorphic.
    Since $X^L(\C) \to X(\C)$ and $Y^L(\C) \to Y(\C)$ are birational locally biLipschitz morphisms, we get the converse implication.
\end{proof}

\begin{cor}
    Let $X$ and $Y$ be Lipschitz saturated algebraic varieties over $\C$. If $X(\C)$ and $Y(\C)$ are birationally locally biLipschitz equivalent, then they are isomorphic.
\end{cor}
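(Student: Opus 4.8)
The plan is to obtain the statement as a direct consequence of Theorem~\ref{TheoFinal} once the hypothesis is unwound. First I would make explicit what "Lipschitz saturated variety" means: it says that the canonical finite birational morphism $\pi^L : X^L \to X$ is an isomorphism, equivalently that $\C[X]^L_{\C[X']} = \C[X]$ as subrings of $\C[X']$ (in the non-affine setting, that the $\Ocal_X$-module $(\Ocal_X)^L_{X'}$ equals $\Ocal_X$). Since $X^L = \Spec\bigl(\C[X]^L_{\C[X']}\bigr)$ — or, globally, $X^L$ is the variety with structure sheaf $(\Ocal_X)^L_{X'}$ — this is tautological: being Lipschitz saturated is precisely the condition $X \simeq X^L$ via $\pi^L$, and likewise $Y \simeq Y^L$.

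Next I would invoke Theorem~\ref{TheoFinal}: the hypothesis that $X(\C)$ and $Y(\C)$ are birationally locally biLipschitz equivalent gives an isomorphism of algebraic varieties $X^L \simeq Y^L$. Composing the three isomorphisms then yields $X \simeq X^L \simeq Y^L \simeq Y$, which is the desired conclusion.

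I do not anticipate any genuine difficulty: the corollary is a formal consequence of the theorem just proved, and the only place where care is needed is in reading the term "Lipschitz saturated" as $X \simeq X^L$ (the algebraic analogue of a germ being equal to its own Pham-Teissier saturation) rather than as some weaker statement about the normalization. It may be worth adding, for perspective, that $X^L$ is always Lipschitz saturated — the Lipschitz saturation operation is idempotent — so the corollary can be rephrased as saying that on the class of Lipschitz saturated varieties the assignment $X \mapsto X^L$ is the identity, and hence that birational local biLipschitz equivalence collapses to isomorphism there.
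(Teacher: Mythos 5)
Your argument is correct and is exactly the one the paper intends (it leaves the corollary as an immediate consequence of Theorem \ref{TheoFinal}): being Lipschitz saturated means $\pi^L : X^L \to X$ is an isomorphism, the theorem gives $X^L \simeq Y^L$, and composing yields $X \simeq Y$. Nothing further is needed.
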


\subsection{Locally biLipschitz algebraic morphisms}\label{SubsecLocLipAlgMorphisms}

The aim of this subsection is to give criteria, using the Lipschitz saturation, for two varieties to be linked by a locally biLipschitz algebraic morphism.

We start by stating the universal property of the Lipschitz seminormalization in terms of locally biLipschitz algebraic morphisms. In order to do so, we first need the simple following lemma concerning the idempotence of the Lipschitz saturation.

\begin{lemma}[\cite{FGST2020LipschitzAnAlgebraicApproch} Lemma 1.4.19]\label{LemIdempotent}
    Let $A\inj B$ be an extension of rings. Then 
    $$(A^L_B)^L_B = A^L_B.$$
\end{lemma}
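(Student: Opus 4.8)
The plan is to reduce the statement to two standard properties of the integral closure of ideals (see \cite{SwansonHuneke}): monotonicity, i.e.\ $I\subseteq J$ implies $\overline{I}\subseteq\overline{J}$, and idempotence, i.e.\ $\overline{\overline{I}} = \overline{I}$. Write $C := A^L_B$, and let $\phi_B : B\otimes_{\C} B \twoheadrightarrow B\otimes_A B$ and $\psi_B : B\otimes_{\C} B \twoheadrightarrow B\otimes_C B$ be the canonical surjections, so that $A^L_B = \{\, b\in B \mid b\otimes 1 - 1\otimes b \in \overline{\ker\phi_B}\,\}$ and $(A^L_B)^L_B = C^L_B = \{\, b\in B \mid b\otimes 1 - 1\otimes b \in \overline{\ker\psi_B}\,\}$.

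First I would prove the inclusion $A^L_B \subseteq (A^L_B)^L_B$. Since $A\subseteq C$, the surjection $\psi_B$ factors as $\psi_B = \theta\circ\phi_B$, where $\theta : B\otimes_A B \twoheadrightarrow B\otimes_C B$ is the natural map; hence $\ker\phi_B \subseteq \ker\psi_B$, so by monotonicity $\overline{\ker\phi_B}\subseteq \overline{\ker\psi_B}$, and comparing the two descriptions above immediately gives $C = A^L_B \subseteq C^L_B$.

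For the reverse inclusion $(A^L_B)^L_B \subseteq A^L_B$, the key point is that the ideal $\ker\psi_B$ of $B\otimes_{\C}B$ is generated by the elements $c\otimes 1 - 1\otimes c$ with $c\in C$ (this is the analogue, for the extension $C\inj B$, of item~(1) in the remark following the definition of the Lipschitz saturation). By the very definition of $C = A^L_B$, each such generator belongs to $\overline{\ker\phi_B}$, and since $\overline{\ker\phi_B}$ is an ideal it therefore contains the whole ideal they generate: $\ker\psi_B \subseteq \overline{\ker\phi_B}$. Applying monotonicity and then idempotence yields
$$\overline{\ker\psi_B} \subseteq \overline{\overline{\ker\phi_B}} = \overline{\ker\phi_B}.$$
Hence, if $b\in (A^L_B)^L_B$, then $b\otimes 1 - 1\otimes b \in \overline{\ker\psi_B}\subseteq \overline{\ker\phi_B}$, that is $b\in A^L_B$; combined with the first inclusion this gives $(A^L_B)^L_B = A^L_B$.

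I do not expect any genuine obstacle: the result is formal once the generators of $\ker\psi_B$ are identified and the two properties of integral closures of ideals are invoked. The only thing to keep track of carefully is the bookkeeping with the three base rings $\C$, $A$ and $C$ appearing in the tensor products; in particular no finiteness or Noetherian hypothesis is used, so the same argument works verbatim in Lipman's relative setting with $\C$ replaced by an arbitrary base ring.
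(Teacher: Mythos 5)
Your proof is correct and follows essentially the same route as the paper: identify the kernel ideals $I=\ker\phi_B$ and $J=\ker\psi_B$, observe that the generators $c\otimes 1-1\otimes c$ of $J$ lie in $\overline{I}$ by the definition of $A^L_B$, and conclude via idempotence of the integral closure of ideals, $\overline{\overline{I}}=\overline{I}$. If anything, your version is slightly more careful than the paper's, which asserts $J=\overline{I}$ outright where only $I\subseteq J\subseteq\overline{I}$ is immediate; your use of monotonicity plus idempotence closes that small gap cleanly.
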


\begin{proof}
    By definition, we have
    $$ A^L_B = \left\{ b\in B \mid b\otimes 1-1\otimes b \in \overline{I} \right\} \text{ and } (A^L_B)^L_B = \left\{ b\in B \mid b\otimes 1-1\otimes b \in \overline{J} \right\}$$
    where $I = \langle a\otimes 1-1\otimes a \mid a\in A\rangle$ and $J = \langle c\otimes 1-1\otimes c \mid c\in A^L_B\rangle$. By definition of $A^L_B$, the elements generating $J$ are the elements of $\overline{I}$. So $J = \overline{I}$ and $\overline{J} = \overline{\overline{I}} = \overline{I}$ by \cite{LejeuneTeissierCloture} Corollary 1.8.
\end{proof}

\begin{rmq}
    As a consequence, we get $(X^L(\C),\KL_{X^L(\C)}) \simeq (X(\C),\KL_{X(\C)})$ and so, by Proposition \ref{PropVarphiIsoSSIBilip}, we have that the Lipschitz saturation morphism $\piC^L$ is a locally biLipschitz homeomorphism.
\end{rmq}

For a finite morphism $Y\to X$, we can state the universal property of the relative Lipschitz saturation of $X$ in $Y$. It is the biggest variety, between $X$ and $Y$, which is linked to $X$ by a finite, birational morphism whose restriction to the closed points is a locally biLipschitz homeomorphism.

\begin{theorem}[Universal property of the relative Lipschitz saturation]\label{TheoPULipSatu}
    Let $Y \to X$ be a finite morphism of complex algebraic varieties. Then $X^L_Y$ is the unique variety with the following property: For every intermediate variety $Y \to Z\to X$ with $\piC : Z(\C) \to X(\C)$ locally biLipschitz, there exists a unique morphism $\pi_Z^L : X_Y^L \to Z$ such that the following diagram commutes.
\[\begin{tikzcd}
	Y && {X^L_Y} && X \\
	\\
	&& Z
	\arrow[from=1-1, to=3-3]
	\arrow["\pi"', from=3-3, to=1-5]
	\arrow[from=1-1, to=1-3]
	\arrow["{\pi^L}", from=1-3, to=1-5]
	\arrow["{\pi^L_Z}", dotted, from=1-3, to=3-3]
\end{tikzcd}\]

    Moreover $\pi_{Z(\C)}^L$ is a locally biLipschitz homeomorphism.
\end{theorem}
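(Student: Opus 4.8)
The statement asserts three things: (i) for every intermediate $Z$ with $\pi_{Z(\C)}$ locally biLipschitz there is a morphism $\pi^L_Z : X^L_Y \to Z$ making the triangle commute; (ii) this morphism is unique; (iii) $\pi^L_{Z(\C)}$ is itself locally biLipschitz. The strategy is to translate everything into an inclusion of coordinate rings sitting inside $\C[Y]$, and to use the idempotence lemma (Lemma \ref{LemIdempotent}) together with Proposition \ref{PropLipSatCritereGeo}.

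\medskip

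\emph{Step 1: locating $\C[Z]$ inside $\C[Y]$.} Since $Y \to Z \to X$ are morphisms with $Y \to X$ finite, the compositions give $\C[X] \inj \C[Z] \inj \C[Y]$ as finite extensions of $\C$-algebras. So it suffices to show $\C[Z] \subseteq \C[X]^L_{\C[Y]}$ as subrings of $\C[Y]$: that inclusion of rings is exactly a morphism $X^L_Y \to Z$ over $X$, and it is automatically unique because any morphism over $X$ between affine varieties is determined by the induced inclusion of coordinate rings inside $\C[Y]$ (here $\C[Z] \inj \C[Y]$ is fixed). Thus (ii) is free once (i) is proven, and (i) reduces to the ring inclusion $\C[Z] \subseteq \C[X]^L_{\C[Y]}$.

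\medskip

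\emph{Step 2: the ring inclusion via the geometric criterion.} Let $p \in \C[Z] \subseteq \C[Y]$. By Proposition \ref{PropLipSatCritereGeo}, $p \in \C[X]^L_{\C[Y]}$ iff locally on $Y(\C)\times Y(\C)$ we have $|p(y_1)-p(y_2)| \leqslant C\|\piC(y_1)-\piC(y_2)\|$, where $\piC : Y(\C)\to X(\C)$ is the composite $Y(\C)\to Z(\C)\to X(\C)$. Write $\rho : Y(\C)\to Z(\C)$ for the first map and $\psi : Z(\C)\to X(\C)$ for the second, so $\piC = \psi\circ\rho$. Since $p\in\C[Z]$, it is a polynomial function on $Z(\C)$, hence in particular locally Lipschitz there, so locally $|p(\rho(y_1))-p(\rho(y_2))| \leqslant C_1\|\rho(y_1)-\rho(y_2)\|$ near any point (using that $\rho$ is continuous and proper for the Euclidean topology, which localizes the estimate correctly — $\rho$ comes from a finite morphism so this is available). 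On the other hand $\psi = \pi_{Z(\C)}$ is locally biLipschitz by hypothesis, so locally $\|\rho(y_1)-\rho(y_2)\| \leqslant C_2\|\psi(\rho(y_1))-\psi(\rho(y_2))\| = C_2\|\piC(y_1)-\piC(y_2)\|$. Chaining these two local estimates (intersecting the relevant neighborhoods, as in the proof of Proposition \ref{PropVarphiIsoSSIBilip}) gives $|p(y_1)-p(y_2)| \leqslant C_1C_2\|\piC(y_1)-\piC(y_2)\|$ locally, so $p \in \C[X]^L_{\C[Y]}$. This proves $\C[Z]\subseteq\C[X]^L_{\C[Y]}$, hence (i) and (ii).

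\medskip

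\emph{Step 3: $\pi^L_{Z(\C)}$ is locally biLipschitz.} We have the factorization $\pi^L = \psi \circ \pi^L_Z$ of the Lipschitz saturation morphism $\pi^L : X^L_Y(\C)\to X(\C)$. By the remark following Lemma \ref{LemIdempotent}, $\pi^L_{(\C)}$ is a locally biLipschitz homeomorphism; and $\psi = \pi_{Z(\C)}$ is locally biLipschitz by hypothesis, in particular injective on suitable neighborhoods, hence $\pi^L_{Z(\C)} = \psi^{-1}\circ \pi^L_{(\C)}$ locally. A composite of a locally biLipschitz homeomorphism with the local inverse of a locally biLipschitz homeomorphism is locally biLipschitz — on a neighborhood where $\psi$ restricts to a biLipschitz homeomorphism onto its image, $\psi^{-1}$ is Lipschitz, and composing with the locally Lipschitz $\pi^L_{(\C)}$ (with locally Lipschitz inverse) gives the claim. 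This yields (iii).

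\medskip

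\emph{Main obstacle.} The one point requiring care is the localization of the Lipschitz estimates in Step 2: the geometric criterion of Proposition \ref{PropLipSatCritereGeo} is a statement about neighborhoods in $Y(\C)\times Y(\C)$, and one must make sure that "$p$ locally Lipschitz on $Z(\C)$" combined with "$\psi$ locally biLipschitz on $Z(\C)$" genuinely produces a local estimate on $Y(\C)\times Y(\C)$ pulled back along $\rho$. Because $\rho$ (and $\psi$) come from finite morphisms, $\rho\times\rho$ is proper and continuous for the Euclidean topology, so a neighborhood of $(\rho(y_1),\rho(y_2))$ in $Z(\C)\times Z(\C)$ pulls back to a neighborhood of $(y_1,y_2)$; this is exactly the bookkeeping carried out in Lemmas \ref{LemLocBoundCircPIisLocBound}–\ref{LemLocBoundIfCircPIisLocBound} and in Proposition \ref{PropVarphiInjEtIm}, and it is what makes the chaining of estimates legitimate. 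Everything else is formal manipulation of coordinate rings and of the universal property already established for the normalization-type constructions.
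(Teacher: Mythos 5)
Your proof is correct, but for the existence part it takes a more hands-on route than the paper. Where you verify directly, via the geometric criterion of Proposition \ref{PropLipSatCritereGeo}, that every $p\in\C[Z]$ satisfies the local estimate $|p(y_1)-p(y_2)|\leqslant C\|\piC(y_1)-\piC(y_2)\|$ on $Y(\C)\times Y(\C)$ (chaining the local Lipschitz bound for the regular function $p$ on $Z(\C)$ with the inverse-Lipschitz bound for $\pi_{Z(\C)}$, then pulling back along $Y(\C)\to Z(\C)$ --- note that only continuity, not properness, is needed for that pullback), the paper instead stays at the level of function rings: it applies Proposition \ref{PropVarphiIsoSSIBilip} to the finite morphism $Z\to X$ to get $\KL(Z(\C))\simeq\KL(X(\C))$, and then reads off $\C[Z]\inj\C[X^L_Y]$ from the fiber-product description $\C[X^L_Y]\simeq\KL(X(\C))\times_{\KL(Y(\C))}\C[Y]$ of Theorem \ref{TheoLipSatEstLipRatio}. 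Your argument thus bypasses Theorem \ref{TheoLipSatEstLipRatio} entirely and is more elementary (the chaining you do is essentially the content of the forward direction of Proposition \ref{PropVarphiIsoSSIBilip}, unpackaged), at the cost of having to handle the off-diagonal pairs of $Y(\C)\times Y(\C)$ with the same level of informality as the paper does elsewhere; the paper's route buys a shorter proof and makes the universal property visibly a formal consequence of the fiber-product theorem. For the final claim, both you and the paper rest on the remark following Lemma \ref{LemIdempotent} applied to the relative saturation morphism $X^L_Y(\C)\to X(\C)$ (the remark is stated for $X^L$, but the idempotence lemma covers the relative case); the paper then reapplies Proposition \ref{PropVarphiIsoSSIBilip} via $\KL(X(\C))\inj\KL(Z(\C))\inj\KL(X^L_Y(\C))\simeq\KL(X(\C))$, whereas you write $\pi^L_{Z(\C)}=\pi_{Z(\C)}^{-1}\circ\pi^L_{(\C)}$ and compose; your version implicitly uses that $\pi_{Z(\C)}$ is a global homeomorphism (so that local inverses exist on genuine neighborhoods), which is the same reading of the hypothesis ``locally biLipschitz'' that the paper's own proof requires.
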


\begin{proof}
We can suppose the varieties to be affine. Since $\piC : Z(\C) \to X(\C)$ is a locally biLipschitz homeomorphism, we have $\KL(Z(\C)) \simeq \KL(X(\C))$ by Proposition \ref{PropVarphiIsoSSIBilip}. So we get the following commutative diagram:
    $$\xymatrix{
        \C[Z] \ar@{^{(}->}[r] \ar@{^{(}->}[d] & \C[Y] \ar@{^{(}->}[d]\\
        \KL(Z(\C)) \ar@{^{(}->}[r] \ar@{->}[d]^{\simeq} & \KL(Y(\C))\\
        \KL(X(\C)) \ar@{^{(}->}[ur] &
    }$$
    By Theorem \ref{TheoLipSatEstLipRatio}, the ring $\C[X^L_Y]$ is the fiber product of $\KL(X(\C))$ and $\C[Y]$ above $\KL(Y(\C))$, so we get the inclusion $\C[Z] \inj \C[X^L_Y]$ which induces a morphism $\pi^L_{Z} : X^L_Y \to Z$. By the remark following Lemma \ref{LemIdempotent}, we get $\KL(X(\C)) \inj \KL(Z(\C)) \inj \KL(X^L(\C)) \simeq \KL(X(\C))$. So $\pi^L_{Z(\C)}$ is a locally biLipschitz homeomorphism.
\end{proof}

Thanks to the following lemma, we get a generalization of this result: the universal property of the Lipschitz seminormalization.

\begin{lemma}[\cite{BFMQ} Proposition 5.6]\label{LemPIhomeoDoncPIfini}
    Let $\pi : Y\to X$ be a morphism of complex algebraic varieties. If $\piC$ is a homeomorphism for the Euclidean topology, then $\pi$ is finite.
\end{lemma}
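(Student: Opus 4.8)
The plan is to show that $\pi$ is quasi-finite, to factor it through a finite morphism via Zariski's Main Theorem, and then to use the hypothesis on $\piC$ to see that this factorization is trivial. Throughout I use that $\pi$, being a morphism of complex algebraic varieties, is separated and of finite type, that for a variety $W$ of finite type over $\C$ the closed points of $W$ are exactly the elements of $W(\C)$ and are Z-dense, and that $W(\C)$ with the Euclidean topology is Hausdorff and metrizable, so that closures and limits can be tested with sequences.

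First I would prove that $\pi$ is quasi-finite. Since $\piC$ is a bijection, $\pi$ induces a bijection on closed points, so the fibre $\pi^{-1}(x)$ over a closed point $x\in X(\C)$ has a single closed point; as it is of finite type over $\kappa(x)=\C$ and Jacobson, it is then topologically a single point, hence $0$-dimensional. By upper semicontinuity of fibre dimension the locus of points of $Y$ at which the fibre has dimension $\geqslant 1$ is Z-closed; if it were nonempty it would contain a closed point, whose image would be a closed point of $X$ carrying a positive-dimensional fibre, a contradiction. Hence all fibres of $\pi$ are finite.

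Now, $\pi$ being quasi-finite, separated and of finite type (with $X$ Noetherian), Zariski's Main Theorem gives a factorization $Y\xrightarrow{\,j\,}\overline{Y}\xrightarrow{\,\overline{\pi}\,}X$ with $j$ an open immersion and $\overline{\pi}$ finite; replacing $\overline{Y}$ by the scheme-theoretic closure of $j(Y)$ — still finite over $X$, and reduced since $Y$ is — we may assume $j(Y)$ is Z-dense in $\overline{Y}$, and we identify $Y$ with the Z-dense Z-open subvariety $j(Y)$ of $\overline{Y}$, so that $\piC$ is the restriction of $\overline{\pi}_{\C}$ to $Y(\C)$. Because $Y$ is Z-dense and Z-open in $\overline{Y}$, its set of $\C$-points is Euclidean-dense in $\overline{Y}(\C)$. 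To finish, I claim $\overline{Y}=Y$: given $z\in\overline{Y}(\C)$, pick $y_n\in Y(\C)$ with $y_n\to z$; continuity of $\overline{\pi}_{\C}$ gives $\piC(y_n)=\overline{\pi}_{\C}(y_n)\to\overline{\pi}_{\C}(z)=:x$, and since $\piC^{-1}$ is continuous, $y_n=\piC^{-1}(\piC(y_n))\to\piC^{-1}(x)\in Y(\C)$; by Hausdorffness $z=\piC^{-1}(x)\in Y(\C)$. So $\overline{Y}(\C)=Y(\C)$, the closed subscheme $\overline{Y}\setminus Y$ of the finite-type $\C$-scheme $\overline{Y}$ has no closed point, hence is empty, and $\pi=\overline{\pi}$ is finite.

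The real content lies in the last paragraph: Zariski's Main Theorem is what converts the bare data "$\pi$ quasi-finite, $\piC$ bijective" into a finite compactification $\overline{Y}\to X$ of $\pi$, after which it is exactly the continuity of $\piC^{-1}$ — the only use of the full homeomorphism hypothesis beyond bijectivity — that prevents $\overline{Y}$ from having points over $X$ outside $Y$. The routine facts I would want cited are that a Z-dense Z-open subset of a variety has Euclidean-dense $\C$-points and that a nonempty finite-type $\C$-scheme has a $\C$-point.
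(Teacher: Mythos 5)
Your argument is correct as far as I can check, but note that the paper itself gives no proof of this lemma: it is quoted verbatim from \cite{BFMQ} (Proposition 5.6), so the only comparison available is with that external source rather than with an in-paper argument. Your route is a legitimate, self-contained one: bijectivity of $\piC$ on closed points gives quasi-finiteness (via Jacobsonness of the fibres and Chevalley semicontinuity of fibre dimension), Zariski's Main Theorem gives a factorization through a finite morphism $\overline{\pi}:\overline{Y}\to X$ with $Y$ a Z-dense Z-open of $\overline{Y}$, and then the continuity of $\piC^{-1}$ together with Euclidean density of $Y(\C)$ in $\overline{Y}(\C)$ forces $\overline{Y}(\C)=Y(\C)$, hence $\overline{Y}=Y$. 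All the auxiliary facts you invoke (density of $\C$-points of a Z-dense Z-open in the strong topology, existence of a closed point on a nonempty finite-type $\C$-scheme, metrizability/Hausdorffness of the strong topology for separated varieties, preservation of closed points under finite-type morphisms) are standard and consistent with the conventions of the paper; one could even streamline the last step by observing that $\piC^{-1}\circ\overline{\pi}_{\C}$ is a continuous self-map of $\overline{Y}(\C)$ agreeing with the identity on the dense subset $Y(\C)$, avoiding sequences altogether. The more common published argument (and the flavour of the one in \cite{BFMQ}) deduces finiteness from topological properness of the homeomorphism $\piC$ plus quasi-finiteness (proper and quasi-finite implies finite), whereas you trade properness for ZMT plus a density argument; both uses of the hypothesis are the same in spirit, namely bijectivity for quasi-finiteness and continuity of the inverse to exclude points missing "at the boundary", so your proof is an acceptable substitute for the citation.
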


\begin{theorem}[Universal property of the relative Lipschitz seminormalization]\label{TheoPULipSemi}
    Let $Y \to X$ be a dominant morphism of complex algebraic varieties. Then $X^{L,+}_Y$ is the unique variety with the following property: For every intermediate variety $Y \to Z\to X$ with $\piC : Z(\C) \to X(\C)$ locally biLipschitz, there exists a unique morphism $\pi_Z^{L,+} : X_Y^{L,+} \to Z$ such that the following diagram commutes.

\[\begin{tikzcd}[sep=large]
	Y & {X^{L,+}_Y} & X \\
	& Z
	\arrow[from=1-1, to=1-2]
	\arrow[from=1-1, to=2-2]
	\arrow["{\pi^{L,+}}", from=1-2, to=1-3]
	\arrow["{\pi^{L,+}_Z}", dashed, from=1-2, to=2-2]
	\arrow["\pi"', from=2-2, to=1-3]
\end{tikzcd}\]
    Moreover $\pi_{Z(\C)}^{L,+}$ is a locally biLipschitz homeomorphism.
\end{theorem}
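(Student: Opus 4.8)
The strategy is to reduce the general dominant case to the finite case already handled in Theorem \ref{TheoPULipSatu}, using Lemma \ref{LemPIhomeoDoncPIfini} to show that the hypothesis on $Z$ forces $Z \to X$ to factor through the relative normalization $X'_Y$. First I would reduce to the affine situation, as in the proof of Theorem \ref{TheoPULipSatu}. Then, given an intermediate variety $Y \to Z \to X$ with $\pi_{Z(\C)} : Z(\C) \to X(\C)$ a locally biLipschitz homeomorphism, the key observation is that a locally biLipschitz homeomorphism is in particular a homeomorphism for the Euclidean topology, so by Lemma \ref{LemPIhomeoDoncPIfini} the morphism $Z \to X$ is finite. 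Since $Z \to X$ is finite, the extension $\C[X] \inj \C[Z]$ is integral, so $\C[Z] \subseteq \C[X]'_{\C[Z]} \subseteq \C[X]'_{\C[Y]}$; that is, $\C[Z]$ sits inside the relative normalization $\C[X'_Y]$, so we have a factorization $Y \to X'_Y \to Z \to X$.

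Now the morphism $X'_Y \to X$ is finite (this is recalled in subsection \ref{SubsecLipSatLipSemi}), so we are exactly in the setting of the universal property of the relative Lipschitz saturation, Theorem \ref{TheoPULipSatu}, applied to the finite morphism $X'_Y \to X$: here $X^L_{X'_Y} = X^{L,+}_Y$ by the very definition of the Lipschitz seminormalization (since $\C[X]^{L,+}_{\C[Y]} = \C[X]^L_{\C[X'_Y]}$, as $\C[X'_Y]$ is the integral closure of $\C[X]$ in $\C[Y]$). The intermediate variety $Z$ satisfies $X'_Y \to Z \to X$ with $\pi_{Z(\C)}$ locally biLipschitz, so Theorem \ref{TheoPULipSatu} yields a unique morphism $\pi^{L,+}_Z : X^{L,+}_Y \to Z$ making the diagram commute, and moreover $\pi^{L,+}_{Z(\C)}$ is a locally biLipschitz homeomorphism. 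Uniqueness of the variety $X^{L,+}_Y$ with this property follows by the standard argument: applying the property to $Z = X^{L,+}_Y$ itself — which is legitimate because $\pi^{L,+}_{X^{L,+}_Y(\C)}$ is locally biLipschitz (this uses the idempotence Lemma \ref{LemIdempotent} and Proposition \ref{PropVarphiIsoSSIBilip}, as in the remark following Lemma \ref{LemIdempotent}) — gives a section, and two varieties both satisfying the universal property dominate each other, hence are isomorphic over $X$.

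The main obstacle is the passage from ``$Z \to X$ is a morphism with $\pi_{Z(\C)}$ locally biLipschitz'' to ``$\C[Z] \subseteq \C[X'_Y]$'': a priori $Z \to X$ need not be finite, and $Z$ need not map to $X'_Y$ at all. This is precisely where Lemma \ref{LemPIhomeoDoncPIfini} does the essential work — it upgrades the topological hypothesis to the algebraic statement that $Z \to X$ is finite, hence that $\C[X] \inj \C[Z]$ is integral. One should double-check that the factorization $Y \to X'_Y \to Z$ is compatible with the given morphisms $Y \to Z$ and $Z \to X$; this is automatic because all these rings are subrings of $\C[Y]$ and the inclusions are the obvious ones, so all triangles commute by construction. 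Everything else is a direct citation of Theorem \ref{TheoPULipSatu} together with the identification $X^{L,+}_Y = X^L_{X'_Y}$.
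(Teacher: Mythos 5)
Your proposal is correct and follows essentially the same route as the paper's proof: use Lemma \ref{LemPIhomeoDoncPIfini} to upgrade the locally biLipschitz hypothesis to finiteness of $Z \to X$, deduce $\C[Z] \inj \C[X'_Y]$ by integrality, and then apply the universal property of the relative Lipschitz saturation (Theorem \ref{TheoPULipSatu}) to $X'_Y \to Z \to X$ together with the identification $X^{L,+}_Y = X^L_{X'_Y}$. The extra details you supply on the compatibility of the factorization and on uniqueness are fine and consistent with the paper's (more terse) argument.
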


\begin{proof}
    Let $Y \to Z\to X$ with $\piC : Z(\C) \to X(\C)$ locally biLipschitz. By Lemma \ref{LemPIhomeoDoncPIfini}, the morphism $\piC : Z \to X$ is finite. This implies that $\Ocal_X\inj \Ocal_Z$ is integral, and so $\Ocal_Z\inj (\Ocal_X)'_{\Ocal_Y} = \Ocal_{X'_Y}$. Since, by definition, we have $X^{L,+}_Y = X^L_{X'_Y}$, we conclude the proof by applying Theorem \ref{TheoPULipSatu} to $X'_Y\to Z \to X$.
\end{proof}

By taking $Y=X'$, we get the universal property of the classical Lipschitz saturation for an algebraic variety.

\begin{cor}\label{CorPULipSatu}
    Let $X$ be a complex algebraic variety. Then $X^L$ is the biggest variety admitting a morphism $\pi^L : X^L \to X$ such that $\pi^L$ is birational and $\piC^L$ is locally biLipschitz.
\end{cor}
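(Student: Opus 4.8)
The plan is to deduce this statement from the universal property of the relative Lipschitz seminormalization (Theorem \ref{TheoPULipSemi}) by specializing to $Y = X'$, together with the fact, noted right after the definition of the Lipschitz seminormalization, that $X^{L,+}_{X'} = X^L$. First I would recall that, because $\C[X]\inj \C[X'] = \C[X']_{\K(X)}$ with $X'\to X$ finite and birational, the relative normalization $X'_{X'}$ is just $X'$ itself, so $X^{L,+}_{X'} = X^L_{X'} = X^L$; in particular $\pi^L : X^L\to X$ is finite and birational, and by the remark following Lemma \ref{LemIdempotent} its restriction $\piC^L$ to closed points is a locally biLipschitz homeomorphism. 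This gives existence of a morphism to $X$ with the stated properties.

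Next I would establish maximality. Suppose $Z\to X$ is any variety admitting a morphism $\pi_Z:Z\to X$ with $\pi_Z$ birational and $\pi_{Z(\C)}$ locally biLipschitz. The point is that a birational morphism $Z\to X$ automatically factors through the normalization when $Z$ is normal — but $Z$ need not be normal a priori. Instead I would argue: since $\pi_{Z(\C)}$ is a locally biLipschitz homeomorphism it is in particular a Euclidean homeomorphism, so by Lemma \ref{LemPIhomeoDoncPIfini} the morphism $\pi_Z$ is finite; being finite and birational, $Z$ sits between $X$ and its normalization $X'$, i.e. we get $\C[X]\inj \C[Z]\inj \C[X']$, which realizes $Z$ as an intermediate variety $X' \to Z \to X$. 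Then Theorem \ref{TheoPULipSemi} applied with $Y = X'$ produces the factoring morphism $X^{L,+}_{X'} = X^L \to Z$, showing $X^L$ dominates $Z$; combined with existence this is exactly the assertion that $X^L$ is the biggest such variety.

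The main obstacle is the step where one must pass from "$\pi_Z$ birational, $\pi_{Z(\C)}$ locally biLipschitz" to "$\pi_Z$ finite and $Z$ lies between $X$ and $X'$"; the birationality hypothesis is essential here (without it $Z$ could fail to embed in $X'$, as the counterexample of Example \ref{ExempleQuestionTeissier} shows for the non-finite case), and the finiteness comes precisely from Lemma \ref{LemPIhomeoDoncPIfini}. Once these two facts are in hand the conclusion is immediate from Theorem \ref{TheoPULipSemi}, so I would keep the write-up short: recall $X^{L,+}_{X'} = X^L$, invoke the remark after Lemma \ref{LemIdempotent} for the properties of $\pi^L$, and invoke Lemma \ref{LemPIhomeoDoncPIfini} plus finiteness-plus-birationality to reduce an arbitrary competitor $Z$ to the intermediate case covered by Theorem \ref{TheoPULipSemi}.
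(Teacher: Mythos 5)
Your proposal is correct and follows essentially the same route as the paper, which obtains the corollary simply by specializing Theorem \ref{TheoPULipSemi} to $Y = X'$ (using that $X^{L,+}_{X'} = X^L$). You additionally spell out the step the paper leaves implicit, namely that a competitor $Z$ with $\pi_Z$ birational and $\pi_{Z(\C)}$ locally biLipschitz is finite over $X$ by Lemma \ref{LemPIhomeoDoncPIfini} and hence satisfies $\C[X]\inj\C[Z]\inj\C[X']$, which is exactly what is needed to place $Z$ between $X'$ and $X$.
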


For irreducible varieties, we can drop the assumption that $\pi^L$ is birational thanks to the following lemma. 

\begin{lemma}[\cite{BFMQ} Proposition 5.3]\label{LemPIbijDoncPIbiration}
    Let $X$ and $Y$ be irreducible complex varieties. Then a morphism from $Y$ to $X$ inducing a bijection on the closed points is quasi-finite and birational.
\end{lemma}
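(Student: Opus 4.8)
The plan is to verify the two conclusions in the following order: first that $\pi$ is dominant, then that $\pi$ is quasi-finite, and finally that the generic degree of $\pi$ equals $1$, which yields birationality. Throughout, write $\piC : Y(\C)\to X(\C)$ for the bijection induced on closed points.

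Dominance is immediate: since $\piC$ is surjective, $\pi(Y)$ contains every closed point of $X$, and closed points are Zariski-dense; as $Y$ is irreducible, $\overline{\pi(Y)}$ is irreducible and dense, hence equal to $X$. Thus $\pi$ is dominant, its comorphism embeds the function fields $\K(X)\inj\K(Y)$, and in particular $\dim Y\geqslant\dim X$.

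I would then prove quasi-finiteness, which I expect to be the main obstacle: the hypothesis only constrains the fibres over \emph{closed} points, and one must upgrade this to finiteness of every fibre. The tool is Chevalley's upper semicontinuity of fibre dimension, which says that $F:=\{\,y\in Y\mid\dim_y\pi^{-1}(\pi(y))\geqslant 1\,\}$ is closed in $Y$. If $F$ were nonempty it would contain a closed point $y_0$, since closed points are dense in the finite-type scheme $Y$; then $\pi^{-1}(\pi(y_0))$ would be a finite-type $\C$-scheme of dimension $\geqslant 1$ at $y_0$, hence with infinitely many closed points, all mapped by $\piC$ to the single point $\pi(y_0)\in X(\C)$ — contradicting injectivity of $\piC$. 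Therefore $F=\emptyset$, every fibre of $\pi$ is zero-dimensional of finite type over a field, hence finite, and $\pi$ is quasi-finite.

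Birationality then follows by a degree count. A dominant quasi-finite morphism of irreducible varieties has $\dim Y=\dim X$ and makes $\K(Y)/\K(X)$ a finite separable field extension (separable as the characteristic is zero), of some degree $d$. Choosing a primitive element $\K(Y)=\K(X)[\alpha]$ with squarefree minimal polynomial $f\in\K(X)[t]$ of degree $d$ and spreading out, one obtains a Zariski-dense open $U\subseteq X$ over which $f$ has coefficients in $\Ocal_X(U)$ with $\disc(f)$ invertible and $\pi^{-1}(U)\cong\Spec\bigl(\Ocal_X(U)[t]/(f)\bigr)$, which is finite \'etale of degree $d$ over $U$; hence the fibre over each closed point of $U$ has exactly $d$ points. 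Since $\piC$ is a bijection, that fibre is a singleton, so $d=1$. Consequently $\K(X)=\K(Y)$ and $\pi^{-1}(U)\to U$ is an isomorphism, proving that $\pi$ is birational.
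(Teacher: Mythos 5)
Your proof is correct, but note there is nothing in the paper to compare it with: the lemma is imported verbatim from \cite{BFMQ} (Proposition 5.3) and the paper gives no proof of it, so your argument serves as a self-contained substitute. The three steps are sound: density of closed points gives dominance; Chevalley semicontinuity of fibre dimension plus the Jacobson property (a nonempty closed subset of $Y$ contains a closed point) and the fact that a positive-dimensional $\C$-variety has infinitely many closed points turn injectivity of $\piC$ into quasi-finiteness; and the generic degree count forces $[\K(Y):\K(X)]=1$. The only compressed point is the spreading-out step: to get $\pi^{-1}(U)\simeq\Spec\bigl(\Ocal_X(U)[t]/(f)\bigr)$ you should also shrink $U$ so that $\pi^{-1}(U)$ is affine and avoids the locus where the primitive element $\alpha$ fails to be regular; this is harmless because that locus $Z$ is a proper closed subset of $Y$ and, by the quasi-finiteness you just proved, $\overline{\pi(Z)}$ is a proper closed subset of $X$ (equivalently, one may invoke the standard fact that a dominant, generically finite morphism of irreducible varieties in characteristic zero is finite étale of degree $[\K(Y):\K(X)]$ over a dense open of the target, and then conclude $d=1$ at once from injectivity on closed points).
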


\begin{theorem}[Universal property of the Lipschitz saturation for irreducible varieties]
    Let $X$ be an irreducible complex algebraic variety. Then $X^L$ is the unique variety with the following property: For every morphism $\pi : Z\to X$ such that $\piC$ is a locally biLipschitz homeomorphism, there exists a unique morphism $\pi^L_Z : X^L \to Z$ such that the following diagram commutes.
    $$\xymatrix{
        X^L \ar@{.>}[d]^{\pi^L_Z} \ar[rr]^{\pi^L} && X \\
        Z \ar[urr]_{\pi} &&
    }$$
    Moreover $\pi_{Z(\C)}^L$ is a locally biLipschitz homeomorphism.
\end{theorem}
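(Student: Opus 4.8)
The plan is to reduce this statement to Corollary \ref{CorPULipSatu} by removing the birationality hypothesis, which is exactly what Lemma \ref{LemPIbijDoncPIbiration} allows in the irreducible case. First I would fix a morphism $\pi : Z \to X$ with $\piC : Z(\C) \to X(\C)$ a locally biLipschitz homeomorphism. In particular $\piC$ is a bijection, so by Lemma \ref{LemPIbijDoncPIbiration} (applicable because $X$ is irreducible, hence so is $Z$ once we know $\pi$ is dominant — which follows since $\piC$ is surjective) the morphism $\pi$ is quasi-finite and birational. Then Lemma \ref{LemPIhomeoDoncPIfini} upgrades quasi-finite to finite, since $\piC$ being locally biLipschitz is in particular a homeomorphism for the Euclidean topology. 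So $\pi : Z \to X$ is a finite, birational morphism whose restriction to closed points is locally biLipschitz.

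Once $\pi$ is finite and birational, $Z$ sits as an intermediate variety $X' \to Z \to X$: indeed finiteness gives $\Ocal_X \inj \Ocal_Z$ integral, and birationality gives $\K(Z) = \K(X)$, so $\Ocal_Z \subseteq (\Ocal_X)'_{\K(X)} = \Ocal_{X'}$. Now I apply Theorem \ref{TheoPULipSatu} (or equivalently Theorem \ref{TheoPULipSemi} with $Y = X'$) to the tower $X' \to Z \to X$: since $\piC : Z(\C) \to X(\C)$ is locally biLipschitz, there is a unique morphism $\pi^L_Z : X^L \to Z$ making the diagram commute, and moreover $\pi^L_{Z(\C)}$ is a locally biLipschitz homeomorphism. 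This gives existence and uniqueness of the factoring morphism, and the final clause of the statement.

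For uniqueness of $X^L$ itself as "the" variety with this property: $X^L \to X$ does satisfy the property, by Corollary \ref{CorPULipSatu} together with the remark following Lemma \ref{LemIdempotent} (which says $\piC^L$ is a locally biLipschitz homeomorphism). If $W \to X$ is another variety with the same universal property, then applying $W$'s property to $\pi^L : X^L \to X$ and $X^L$'s property to $W \to X$ yields mutually inverse morphisms between $W$ and $X^L$ over $X$, hence an isomorphism — the standard yoga of universal properties.

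The main obstacle, and the only place requiring care, is checking the hypotheses of Lemma \ref{LemPIbijDoncPIbiration} and Lemma \ref{LemPIhomeoDoncPIfini} apply: one must confirm $Z$ is irreducible (from $\piC$ surjective onto the irreducible $X(\C)$ and $Z$ being a variety, so $Z(\C)$ is Zariski-irreducible, hence $Z$ is) and that $\pi$ is dominant so that the birational conclusion is meaningful. Everything after that is a direct invocation of the already-established universal property Theorem \ref{TheoPULipSatu}.
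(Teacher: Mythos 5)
Your overall strategy is the paper's: reduce to the finite birational case and invoke Corollary \ref{CorPULipSatu} (equivalently Theorem \ref{TheoPULipSatu} with $Y=X'$), using Lemma \ref{LemPIhomeoDoncPIfini} and Lemma \ref{LemPIbijDoncPIbiration}. However, there is a genuine gap in the one step you yourself flag as delicate: the irreducibility of $Z$. You justify it by saying that $\pi$ is dominant, respectively that $\piC$ is surjective onto the irreducible $X(\C)$; neither of these implies that $Z$ is irreducible (a disjoint union of a point and $\A^1\setminus\{0\}$ maps bijectively on closed points onto $\A^1$, yet is reducible — what rules such examples out here is precisely the Euclidean-homeomorphism hypothesis, not surjectivity). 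Since Lemma \ref{LemPIbijDoncPIbiration} requires \emph{both} varieties to be irreducible, you are not entitled to apply it at the point where you do; and because in your ordering finiteness is deduced only afterwards (you use Lemma \ref{LemPIhomeoDoncPIfini} merely to upgrade quasi-finite to finite), the argument as written has no valid source for the irreducibility of $Z$.

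The repair is exactly the paper's ordering of the two lemmas. First apply Lemma \ref{LemPIhomeoDoncPIfini}: since $\piC$ is in particular a homeomorphism for the Euclidean topology, $\pi$ is finite. A finite morphism is Zariski-closed, so the bijective morphism $\pi$ is a Zariski homeomorphism, and therefore $Z$ is irreducible because $X$ is. Only now does Lemma \ref{LemPIbijDoncPIbiration} apply, giving that $\pi$ is birational; then $\Ocal_X\inj\Ocal_Z$ is integral and $\Ocal_Z\subseteq \Ocal_{X'}$, and Corollary \ref{CorPULipSatu} (or your rerun of Theorem \ref{TheoPULipSatu} on $X'\to Z\to X$, which is fine) finishes the argument, including the statement that $\pi^L_{Z(\C)}$ is locally biLipschitz. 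Your closing remark on uniqueness of $X^L$ by the usual universal-property argument is correct and unproblematic.
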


\begin{proof}
    By lemma \ref{LemPIhomeoDoncPIfini}, such a morphism $\pi : Z\to X$ is finite and bijective, so it is a Zariski homeomorphism. This implies that $Z$ is also irreducible. Then, we can apply Lemma \ref{LemPIbijDoncPIbiration} and so the morphism is birational. We get the theorem by applying Corollary \ref{CorPULipSatu}.
\end{proof}

For a given morphism of complex algebraic varieties such that its restriction to the closed points is a locally biLipschitz homeomorphism, we can deduce from the universal properties stated before, algebraic conditions for the morphism to be an isomorphism.\\

\begin{cor}\label{CorPIisoIFF}
    Let $\pi : Y\to X$ be a morphism of complex algebraic varieties such that $\piC$ is a locally biLipschitz homeomorphism. Then $\pi$ is an isomorphism if and only if $X$ is Lipschitz saturated in $Y$.
\end{cor}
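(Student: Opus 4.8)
The plan is to prove the two implications separately, using the universal properties established above. For the forward direction, suppose $\pi : Y \to X$ is an isomorphism. Then trivially $\C[X] \simeq \C[Y]$, and since the Lipschitz saturation is functorial in the pair $(A, B)$, we get $\C[X]^L_{\C[Y]} \simeq \C[X]^L_{\C[X]}$. But $\C[X]^L_{\C[X]} = \C[X]$ directly from the definition, because $\ker \phi_{\C[X]} = 0$ when $A = B$ (the map $\phi_B : B \otimes_\C B \to B \otimes_A B$ is the identity there), hence $\overline{\ker\phi_{\C[X]}} = 0$ and the saturation condition $b \otimes 1 - 1 \otimes b \in \overline{\ker\phi_B}$ forces $b \otimes 1 = 1 \otimes b$, which holds for all $b$. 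Therefore $X$ is Lipschitz saturated in $Y$. Actually, more carefully: one should just observe that an isomorphism $\pi$ makes the inclusion $\C[X] \inj \C[Y]$ an equality after identification, so $\C[X]^L_{\C[Y]} = \C[Y] = \C[X]$.

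For the converse, assume $X$ is Lipschitz saturated in $Y$, i.e. $\C[X]^L_{\C[Y]} = \C[Y]$, equivalently $X^L_Y = Y$ (note that since $\piC$ is a homeomorphism for the Euclidean topology, Lemma \ref{LemPIhomeoDoncPIfini} gives that $\pi$ is finite, so $X^L_Y = \Spec(\C[X]^L_{\C[Y]})$ is a genuine variety and $X^{L,+}_Y = X^L_Y$). Now apply the universal property of the relative Lipschitz saturation, Theorem \ref{TheoPULipSatu}, taking $Z = X$ itself: the identity morphism $\mathrm{id}_X : X \to X$ is trivially a locally biLipschitz homeomorphism on closed points, so there is a unique morphism $\pi^L_X : X^L_Y \to X$ factoring $\pi^L$ through the identity, which simply means $\pi^L_X = \pi^L$. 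But $X^L_Y = Y$, so $\pi^L$ is a morphism $Y \to X$ which, composed with $\mathrm{id}_X$, equals $\pi^L : X^L_Y \to X$ — and under the identification $X^L_Y = Y$, the morphism $\pi^L$ is precisely $\pi$. The point is that $\pi^L : X^L_Y \to X$ is always finite and birational (stated in the excerpt, and following from $\C[X^L_Y] \inj \C[X']$), and when $X^L_Y = Y$ it coincides with $\pi$; moreover $\pi$ is finite (Lemma \ref{LemPIhomeoDoncPIfini}) and bijective on closed points, hence a Zariski homeomorphism, hence $\C[X] \inj \C[Y]$ is subintegral, hence birational by Lemma \ref{LemPIbijDoncPIbiration} applied componentwise after reducing to irreducible components (or directly: a finite birational morphism $Y \to X$ with $\C[X]$ integrally closed in $\C[Y]$ — which holds since $\C[Y] = \C[X]^L_{\C[Y]} \inj \C[X']$ forces $\C[X] = \C[X'_Y]$... ). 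The cleanest finish: $\pi$ finite and $\C[X]^L_{\C[Y]} = \C[Y]$ implies $\C[X]'_{\C[Y]} = \C[Y]$, so $\C[Y] \inj \C[X']$, so $\pi$ is birational; a finite birational morphism is an isomorphism onto its image iff the target ring is integrally closed in the source — but here we directly have $\C[X] = \C[X^L_Y] = \C[Y]$, so $\pi^*$ is an isomorphism and $\pi$ is an isomorphism.

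I expect the main obstacle to be bookkeeping rather than mathematics: one must be careful that "$X$ Lipschitz saturated in $Y$" plus "$\piC$ locally biLipschitz homeomorphism" together force $\pi$ to be finite and birational before one can conclude it is an isomorphism, and the birationality is the subtle point — it does not follow from Lipschitz saturation alone for arbitrary (non-dominant or reducible) situations, so one genuinely needs the hypothesis that $\piC$ is a homeomorphism (to invoke Lemma \ref{LemPIhomeoDoncPIfini} for finiteness) together with Lemma \ref{LemPIbijDoncPIbiration} or Proposition \ref{PropSubEquiMemeFctRatioCont} for birationality on each irreducible component. Once finiteness and birationality are in hand, "$\C[X]^L_{\C[Y]} = \C[Y]$" combined with the fact that $\C[X^L_Y] \inj \C[X']$ and $\pi^L$ is birational collapses everything: $\C[X] = \C[X^L_Y] = \C[Y]$, giving the isomorphism.
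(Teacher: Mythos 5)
You have misread the statement, and the misreading creates a real gap in your converse direction. In this corollary (and in the proof of the corollary that follows it, where ``$\Ocal_X \simeq \Ocal_{X^L_Y}$, so $X$ is Lipschitz saturated in $Y$'' is the conclusion drawn), ``$X$ is Lipschitz saturated in $Y$'' means $\C[X]^L_{\C[Y]} = \C[X]$, i.e.\ $X = X^L_Y$ --- not $\C[X]^L_{\C[Y]} = \C[Y]$ as you take it. With your reading the corollary would in fact be false: whenever $\piC$ is a locally biLipschitz homeomorphism, Theorem \ref{TheoPULipSatu} applied to the intermediate variety $Z = Y$ itself gives $\C[Y] \inj \C[X]^L_{\C[Y]}$, hence $X^L_Y = Y$ automatically; so your version of the right-hand side always holds, while $\pi$ need not be an isomorphism (take $\pi^L : X^L \to X$ for any $X$ with $X^L \neq X$, which is a locally biLipschitz homeomorphism on closed points by the remark after Lemma \ref{LemIdempotent}). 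This also explains why your converse argument cannot close: you apply the universal property only with $Z = X$, which returns $\pi^L$ itself and yields nothing, and the decisive equality you then invoke, $\C[X] = \C[X^L_Y]$, is never justified --- it is precisely the intended hypothesis, not a consequence of $X^L_Y = Y$ --- so the argument is circular at that point.

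The paper's proof is the short version of what you almost did, with the universal property applied at $Z = Y$ instead of $Z = X$: by Lemma \ref{LemPIhomeoDoncPIfini}, $\pi$ is finite (your use of this lemma is correct); Theorem \ref{TheoPULipSatu} with $Z = Y$ (this is where the locally biLipschitz hypothesis actually enters) gives $X^L_Y = Y$; hence $X$ is Lipschitz saturated in $Y$, i.e.\ $\C[X] = \C[X]^L_{\C[Y]}$, if and only if $\C[X] = \C[Y]$ inside $\C[Y]$, i.e.\ if and only if $\pi$ is an isomorphism. Your forward direction is fine but trivial under either reading, and the detour through birationality and $\C[X'_Y]$ is unnecessary once the universal property is used at $Z = Y$. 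To be fair, the paper's own definition (``the extension $A \inj B$ is Lipschitz saturated if $A^L_B = B$'') invites your reading; but the statement actually proved, and the one used later in the paper, is the one with $\C[X]^L_{\C[Y]} = \C[X]$.
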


\begin{proof}
    By lemma \ref{LemPIhomeoDoncPIfini}, we get that $\pi$ is finite. So we can apply the universal property of the relative Lipschitz saturation and we get $X^L_Y = Y$. Hence $X \simeq X^L_Y$ if and only if $\pi$ is an isomorphism.
\end{proof}

\begin{cor}
    Let $\pi : Y\to X$ be a morphism of complex algebraic varieties such that $X$ is Lipschitz saturated. Then $\piC$ is a locally biLipschitz homeomorphism if and only if $\pi$ is an isomorphism.
\end{cor}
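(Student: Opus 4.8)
The plan is to deduce this corollary directly from Corollary \ref{CorPIisoIFF} together with the hypothesis that $X$ is Lipschitz saturated. Recall Corollary \ref{CorPIisoIFF} states: if $\pi:Y\to X$ is a morphism with $\piC$ a locally biLipschitz homeomorphism, then $\pi$ is an isomorphism if and only if $X$ is Lipschitz saturated in $Y$. So the work reduces to relating "$X$ is Lipschitz saturated" (i.e. $\C[X]^L_{\C[X']} = \C[X']$, equivalently $X^L = X$) to "$X$ is Lipschitz saturated in $Y$" (i.e. $\C[X]^L_{\C[Y]} = \C[Y]$) under the standing assumption that $\piC$ is a locally biLipschitz homeomorphism.

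First I would handle the easy direction: if $\pi$ is an isomorphism, then $\piC$ is trivially a locally biLipschitz homeomorphism, so that implication is immediate and uses nothing. For the substantive direction, assume $\piC$ is a locally biLipschitz homeomorphism. By Lemma \ref{LemPIhomeoDoncPIfini}, $\pi$ is finite, hence $\C[X]\inj\C[Y]$ is integral, so $\C[X'_Y]=\C[Y]$ and $X^{L,+}_Y = X^L_Y$. Now I would invoke the universal property of the relative Lipschitz saturation (Theorem \ref{TheoPULipSatu}) applied to the chain $Y\to Y\to X$: since $\piC:Y(\C)\to X(\C)$ is locally biLipschitz, $Y$ itself is an admissible intermediate variety $Z$, and the theorem's conclusion (read with $Z=Y$) gives $X^L_Y = Y$, so $X$ is Lipschitz saturated in $Y$. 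Alternatively, and more cleanly, one applies Corollary \ref{CorPIisoIFF} in the form: $\pi$ is an isomorphism iff $X$ is Lipschitz saturated in $Y$, so it remains only to show that the global hypothesis "$X$ is Lipschitz saturated" forces "$X$ is Lipschitz saturated in $Y$". For this, use the universal property of the (absolute) Lipschitz saturation (Corollary \ref{CorPULipSatu}): $X^L$ is the biggest variety admitting a birational morphism to $X$ whose restriction to closed points is locally biLipschitz. Since $\piC$ is locally biLipschitz and (by Lemma \ref{LemPIbijDoncPIbiration}, after checking irreducibility is not needed — or directly because $\pi$ finite and bijective forces birationality on each component) $\pi$ is birational, $Y$ factors through $X^L$; but $X^L = X$ by hypothesis, so $Y\to X$ factors through $X$, forcing $Y=X^L_Y$, i.e. $X$ is Lipschitz saturated in $Y$. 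Then Corollary \ref{CorPIisoIFF} yields that $\pi$ is an isomorphism.

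Concretely, I would write the proof as: "By Lemma \ref{LemPIhomeoDoncPIfini}, $\pi$ is finite, so we may apply the universal property of the relative Lipschitz saturation (Theorem \ref{TheoPULipSatu}), which gives $X^L_Y = Y$. Since $X$ is Lipschitz saturated, $X^L = X$, and the remark following Lemma \ref{LemIdempotent} together with Theorem \ref{TheoPULipSatu} identifies $X \simeq X^L_Y = Y$ precisely when $\pi$ is an isomorphism; by Corollary \ref{CorPIisoIFF}, $X$ Lipschitz saturated in $Y$ is equivalent to $\pi$ being an isomorphism, and $X$ Lipschitz saturated gives $\C[X]^L_{\C[Y]}\supseteq \C[X^L_Y]\supseteq$ ... " — but the slickest route is simply to observe $\C[X^L_Y]$ sits between $\C[X]$ and $\C[Y]$, admits a birational locally biLipschitz map to $X$, hence is a subvariety-quotient dominated by $X^L=X$, forcing $\C[X^L_Y]=\C[X]$ as well as $=\C[Y]$, so $\C[X]=\C[Y]$ and $\pi$ is an isomorphism.

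The main obstacle is purely bookkeeping: making sure the universal property of the relative Lipschitz saturation is legitimately applicable — this needs $\pi$ finite, which is exactly what Lemma \ref{LemPIhomeoDoncPIfini} supplies from the locally biLipschitz (in particular homeomorphism) hypothesis — and making sure one does not circularly re-derive Corollary \ref{CorPIisoIFF}. In fact the corollary is nothing more than Corollary \ref{CorPIisoIFF} read in the contrapositive-friendly direction, so the honest "proof" is the one-liner: "By Corollary \ref{CorPIisoIFF}, $\pi$ is an isomorphism if and only if $X$ is Lipschitz saturated in $Y$; since $X$ is Lipschitz saturated and $\piC$ is locally biLipschitz, the universal property (Corollary \ref{CorPULipSatu}) shows $Y$ factors through $X^L=X$, hence $X$ is Lipschitz saturated in $Y$; conversely an isomorphism is obviously locally biLipschitz." I expect the author's proof to be exactly this short appeal to Corollary \ref{CorPIisoIFF}.
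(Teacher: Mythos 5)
Your first route (and the ``slickest'' one-liner you end with) does not work: it never uses the hypothesis that $X$ is Lipschitz saturated, so it would ``prove'' that \emph{every} morphism which is locally biLipschitz on closed points is an isomorphism --- false, since $X^L(\C)\to X(\C)$ is always locally biLipschitz while $X^L\neq X$ in general (e.g.\ $y^4=x^5$). The slip is in the phrase ``$X^L_Y=Y$, so $X$ is Lipschitz saturated in $Y$''. Applying Theorem \ref{TheoPULipSatu} with $Z=Y$ indeed gives $X^L_Y=Y$ (this is exactly the paper's proof of Corollary \ref{CorPIisoIFF}), but the condition ``$X$ is Lipschitz saturated in $Y$'' as it functions in Corollary \ref{CorPIisoIFF} is $\C[X]^L_{\C[Y]}=\C[X]$, i.e.\ $X\simeq X^L_Y$, not $\C[X]^L_{\C[Y]}=\C[Y]$; with your reading, Corollary \ref{CorPIisoIFF} would be vacuous under its own hypotheses. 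Your second route is closer to a proof, but it has two gaps. First, to feed $Y$ into Corollary \ref{CorPULipSatu} (equivalently into Theorem \ref{TheoPULipSatu} for $X'\to Y\to X$) you need $\pi$ birational; Lemma \ref{LemPIbijDoncPIbiration} gives this only for irreducible varieties, whereas the corollary has no irreducibility hypothesis, and your parenthetical ``birationality on each component'' is asserted, not proved. Second, the universal property is used the wrong way round: maximality of $X^L$ yields a morphism $X^L\to Y$ over $X$ (hence, when $X^L=X$, a section of $\pi$, from which $\pi$ iso follows since $\piC$ is injective), not ``$Y$ factors through $X^L$''; and the subsequent step ``forcing $Y=X^L_Y$, i.e.\ $X$ is Lipschitz saturated in $Y$'' repeats the definitional confusion above.

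The paper's proof avoids all of this and is worth comparing. It gets finiteness from Lemma \ref{LemPIhomeoDoncPIfini}, then uses Theorem \ref{TheoLipSatEstLipRatio} to obtain the sandwich $\C[X]\hookrightarrow\C[X^L_Y]\hookrightarrow\KL(X(\C))$; since $X$ is Lipschitz saturated, Corollary \ref{CorLipSatEstLipRatio} gives $\KL(X(\C))\simeq\C[X^L]=\C[X]$, hence $\C[X^L_Y]=\C[X]$, i.e.\ $X$ is Lipschitz saturated in $Y$ in the sense required, and Corollary \ref{CorPIisoIFF} concludes. This needs neither birationality nor irreducibility. If you want to keep your universal-property argument, you must either restrict to irreducible varieties or supply the reducible-case birationality statement; otherwise, replace that step by the sandwich argument above.
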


\begin{proof}
    If $\piC$ is an homeomorphism for the strong topology, then it is finite by Lemma \ref{LemPIhomeoDoncPIfini}. Then, by Theorem \ref{TheoLipSatEstLipRatio}, we have 
    $$\Ocal_X \inj \Ocal_{X^L_Y} \inj \KL_{X(\C)} .$$
    Since $X$ is Lipschitz saturated, we get $\Ocal_X \simeq \KL_{X(\C)}$ and so $\Ocal_X \simeq \Ocal_{X^L_Y}$. So $X$ is Lipschitz saturated in $Y$ and we conclude by Corollary \ref{CorPIisoIFF}.
\end{proof}

\bibliographystyle{abbrv}
\bibliography{Biblio.bib}

\noindent François Bernard, Université Paris-Cité, IMJ-PRG\\
Bâtiment Sophie Germain, Place Aurélie Nemours, 75013, Paris, France\\
\textit{E-mail address: \textbf{fbernard@imj-prg.fr}}

\end{document}